\newtheorem{thm}{Theorem}[section]
\newtheorem{prop}[thm]{Proposition}
\newtheorem{cor}[thm]{Corollary}
\theoremstyle{definition}
\newtheorem{definition}[thm]{Definition}
\newtheorem{lemma}[thm]{Lemma}
\theoremstyle{remark}
\newtheorem{remark}[thm]{Remark}
\numberwithin{equation}{section}
\newcommand{\R}{\mathbf{R}}  
\DeclareMathOperator{\Def}{Def}
\DeclareMathOperator{\Defab}{Def_{(a,b)}}
\DeclareMathOperator{\Res}{Res}
\DeclareMathOperator{\End}{End}
\DeclareMathOperator{\Aut}{Aut}
\DeclareMathOperator{\can}{can}
\DeclareMathOperator{\uu}{u}
\DeclareMathOperator{\ppdiv}{pdiv}
\DeclareMathOperator{\GL}{GL}
\DeclareMathOperator{\GSp}{GSp}
\DeclareMathOperator{\tor}{tor}
\DeclareMathOperator{\an}{an}
\DeclareMathOperator{\Gal}{Gal}
\DeclareMathOperator{\Spf}{Spf}
\DeclareMathOperator{\Spec}{Spec}
\newcommand{\Rab}{\ensuremath{R_{(a,b)}}}
\newcommand{\Uab}{\ensuremath{U_{(a,b)}}}
\newcommand{\pdiva}{\ensuremath{\mathscr{G}_{(a,r)}}}
\newcommand{\pdivb}{\ensuremath{\mathscr{G}_{(b,r)}}}
\newcommand{\pdivab}{\ensuremath{\mathscr{G}_{(a,b)}}}
\newcommand{\C}{\ensuremath{\mathbb{C}}}
\newcommand{\Z}{\ensuremath{\mathbb{Z}}}
\newcommand{\D}{\ensuremath{\mathbb{D}}}
\newcommand{\A}{\ensuremath{\mathbb{A}}}
\newcommand{\F}{\ensuremath{\mathbb{F}}}
\newcommand{\Fpbar}{\ensuremath{\overline{\mathbb{F}}_p}}
\newcommand{\Q}{\ensuremath{\mathbb{Q}}}
\newcommand{\Ok}{\ensuremath{\mathcal{O}}}
\newcommand{\pdivh}{\ensuremath{\mathscr{H}}}
\newcommand{\pdiv}{\ensuremath{\mathscr{G}}}
\newcommand{\lpdiv}{\ensuremath{\tilde{\mathscr{G}}}}
\newcommand{\et}{\ensuremath{{\acute{e}t}}}
\newcommand{\Hom}{\ensuremath{\mbox{Hom}}}
\newcommand{\Ext}{\ensuremath{\mathscr{Ext}}}
\begin{document}
	\title{Serre--Tate theory for Shimura varieties of Hodge type} 
	\author{Ananth N. Shankar}
	\address{Department of Mathematics, Massachusetts Institute of Technology, Cambridge, USA, MA 02139}
		\email{ananths@mit.edu}
	\author{Rong Zhou}
	\address{Department of Pure Mathematics and Mathematical Statistics, University of Cambridge,  Cambridge, UK, CB3 0WA}
	\email{rz240@cam.ac.uk}	
	
	\maketitle
	\begin{abstract}
		We study the formal neighbourhood of a point in the $\mu$-ordinary locus of an integral model of a Hodge type Shimura variety. We show that this formal neighbourhood has a  structure of a ``shifted cascade''. Moreover we show that the CM points on the formal neighbourhood are dense and that the identity section of the shifted cascade corresponds to a lift of the abelian variety which has a characterization in terms of its endomorphisms, analogous to the Serre--Tate canonical lift of an ordinary abelian variety.
	\end{abstract}
	

	\tableofcontents
	
	\section{Introduction}
	
	Given an abelian variety $\mathcal{A}$ over a perfect field $k$ of characteristic $p$, the classical theorem of Serre and Tate (\cite{ST}) says that deformations of $\mathcal{A}$ correspond to deformations of the associated $p$-divisible group $\mathcal{A}[p^\infty].$  Using this they discovered a remarkable and surprising group structure on the deformation space of a principally polarized ordinary abelian variety $\mathcal{A}$. More precisely, they  showed that this deformation space has the structure of a formal torus over the Witt vectors of $k$. Moreover, if $\mathcal{A}$ has CM, then the torsion points of this formal torus correspond precisely to deformations of $\mathcal{A}$ which also have CM. The identity section of the torus corresponds to what is known as the Serre--Tate canonical lift. This can  be characterized as the unique deformation of $\mathcal{A}$ to $W(k)$ to which all automorphisms of $\mathcal{A}$ extend.  We refer to \cite{Katz} for more details about these results.
	
	The deformation space of $\mathcal{A}$ can be identified with the formal neighbourhood of the corresponding point on the ordinary locus of $\mathscr{A}_g$, the moduli space of principally polarized abelian varieties. Therefore, this result gives some very interesting  and useful information about the local structure of $\mathscr{A}_g$. For example, this description of the formal neighbourhood was a key input in Chai's verification of certain cases of the Hecke orbit conjecture; see \cite{Chai1}.

	In this paper we propose a generalization of the above results of Serre and Tate to more general Shimura varieties, specifically those of Hodge type. To explain the results we need to introduce some notation.

	Let $(G,X)$ be a Shimura datum and $p>2$ a prime, we assume it is equipped with a Hodge embedding $\rho:G\rightarrow \GSp(V,\psi)$ in the sense of \cite{Del}. Assume the base change $G_{\Q_p}$ has a reductive model $G_{\Z_p}$ over $\Z_p$ (this is equivalent to $G_{\Q_p}$ being unramified). Then for $K_p=G_{\Z_p}(\Z_p)$ and $K^p\subset G(\A_f^p)$ sufficiently small compact open subgroups (here $\A_f^p$ is the ring of finite adeles with trivial component at $p$), Kisin \cite{Ki2} has constructed the canonical integral model $\mathscr{S}_{K_pK^p}(G,X)/\mathcal{O}_{E_v}$ for the Shimura variety $Sh_{K_pK^p}(G,X)/E$.  Here $E$ is the reflex field, $v$ a prime of $E$ above $p$ and $\mathcal{O}_{E,v}$ the ring of integers of the completion $E_v$ of $E$.
	
	Given a point $x\in\mathscr{S}_{K_pK^p}(G,X)(\Fpbar)$, it is shown in \cite{Ki2} that one can associate to $x$ an abelian variety $\mathcal{A}_x$ with $G$-structure (see \S 2) and its $p$-divisible group $\pdiv_x$. We write $\Ok_L$ for $W(\Fpbar)$. The main result of the paper is the following
	
	\begin{thm} \label{main}
		
		Let $x\in \mathscr{S}_{K_pK^p}(G,X)(\Fpbar)$ lie on the $\mu$-ordinary locus and let $\widehat{U}_x$ denote the formal neighbourhood of $\mathscr{S}_{K_pK^p}(G,X)$ at $x$. Then, 
		\begin{enumerate}
			
			\item There exists exists a unique point $\tilde{x}\in \widehat{U}_x(\Ok_L)$ which can be characterized as the unique $\Ok_L$-point lifting $x$ for which all $G$-automorphisms of $\mathcal{A}_x$ lift to $\mathcal{A}_{\tilde{x}}$. \label{mainone}
			\item $\widehat{U}_x$ has the structure of a shifted subcascade.\label{maintwo}
			
			\item The set of CM points in $\widehat{U}_x$ are dense. \label{mainthree}
		\end{enumerate}
	\end{thm}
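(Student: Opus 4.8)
The strategy is to reduce everything to the Serre–Tate theory of the ambient Siegel space and then exploit the $G$-structure. Let me outline the three parts in order.

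For part (1), the plan is to work inside the deformation space of the $p$-divisible group $\mathcal{A}_x[p^\infty]$ with its Tate tensors $(s_\alpha)$. Kisin's construction realizes $\widehat{U}_x$ as the closed formal subscheme of the Siegel deformation space cut out by the condition that the $s_\alpha$ lift to Tate tensors; on the $\mu$-ordinary locus this deformation functor is pro-represented by a formal scheme which, by the theory of Rapoport–Zink / Shen and the group-theoretic description of $\mu$-ordinary $p$-divisible groups, carries extra structure. The key input is that a $\mu$-ordinary $p$-divisible group with $G$-structure is, up to isogeny, a direct sum of isoclinic pieces indexed by the slopes of $\mu$, and its deformation functor compatible with the tensors is governed by a filtered Dieudonné module picture. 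I would first show existence of $\tilde x$ by taking the canonical (slope) filtration on $\mathcal{A}_x[p^\infty]$ to be split — i.e.\ the lift whose Dieudonné module is the direct sum of the graded pieces with their Frobenius — and check that this lift respects all the $s_\alpha$, hence lies in $\widehat{U}_x(\Ok_L)$. Then, for the characterization, I would argue that a $G$-automorphism of $\mathcal{A}_x$ acts on the slope filtration preserving each graded piece, so it automatically extends to the split lift; conversely, if a lift $\tilde y$ admits all such automorphisms and there are enough of them (the centralizer of the slope torus acts with open orbit-type behavior on the deformation parameters), then $\tilde y$ must be the split one. Uniqueness should then follow from a rigidity argument: the subgroup of automorphisms includes a cocharacter acting with distinct weights on the "off-diagonal" deformation coordinates, so the only fixed point is the origin.

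For part (2), the shifted cascade structure is the group-theoretic shadow of the slope filtration having more than two steps. The plan is: the universal deformation of $\mathcal{A}_x[p^\infty]$ respecting the tensors decomposes, via the slope filtration $0 = \Fil^0 \subset \Fil^1 \subset \cdots \subset \Fil^r = \mathcal{A}_x[p^\infty]$, into iterated extension data. Each pair of graded pieces $(\mathrm{gr}^i, \mathrm{gr}^j)$ contributes a formal group (a "biextension"-type object, here a formal torus twisted by the relevant Hom-scheme of $p$-divisible groups with $G$-structure), and these fit together compatibly — extensions of $\Fil^i/\Fil^{i-2}$ involve both the $\mathrm{gr}^{i-1},\mathrm{gr}^i$ extension and a biextension term. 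This is exactly the combinatorial shape of a cascade in the sense of Moonen; the "shifted" part reflects that the isoclinic pieces are not étale/multiplicative but have intermediate slopes, so the relevant groups are not literal formal tori but become so after the appropriate Tate twist / shift. I would set this up by first handling $r=2$ (a single formal group, the "shifted formal torus" — this is the direct generalization of Serre–Tate), then do the general case by descending induction on the filtration, at each stage identifying the fiber of $\widehat{U}_x$ over the lower-step deformation space with a torsor under the relevant biextension group. The main technical point is to check that the $G$-structure (the tensors $s_\alpha$) is compatible with this decomposition, which uses that $s_\alpha$ lies in the centralizer filtration and that the $\mu$-ordinary Newton cocharacter commutes with the Hodge cocharacter.

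For part (3), density of CM points, the plan is to use the cascade structure from (2) together with the fact that CM points correspond to torsion-type sections. In the classical Serre–Tate case, CM lifts are precisely the torsion points of the formal torus, which are dense. In the shifted cascade, I expect the analogous statement: a lift $\tilde y$ is CM (more precisely, $\mathcal{A}_{\tilde y}$ is a CM abelian variety with the $G$-structure) if and only if, at each level of the cascade, the corresponding extension class is "torsion" — i.e.\ lies in the torsion of the shifted formal torus, or is defined over a finite extension so that the associated Galois representation has abelian image / the Mumford–Tate group is a torus. The argument would be: (i) the canonical lift $\tilde x$ is CM, since its $p$-divisible group is the split isoclinic one and the abelian variety it underlies can be shown to have CM by a theorem of the type "$\mu$-ordinary $\Rightarrow$ the canonical lift is CM" (this is where one invokes that the special Mumford–Tate group of the split lift is a torus, built from the slope torus and the original $G$); (ii) the torsion points of each cascade group are dense; (iii) a point all of whose cascade-coordinates are torsion is again CM, by an inductive argument — an extension of CM $p$-divisible-groups-with-structure by a torsion class is again CM. The main obstacle I anticipate is precisely step (iii) and the whole of part (2)'s compatibility check: showing that the tensors $s_\alpha$ survive the cascade decomposition in a way that lets one read off "CM" from the coordinates, and more basically pinning down the correct definition of the shifted cascade group so that its torsion is both dense and corresponds to CM — this is the heart of the paper and will require careful bookkeeping with the local model and the display/Dieudonné theory of $p$-divisible groups with crystalline tensors.
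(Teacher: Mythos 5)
Your plan for part (1) is essentially the paper's argument: after normalizing the Frobenius to $\sigma(\mu(p))$ (Wortmann's result), the canonical lift is the one whose Hodge filtration is compatible with the slope decomposition, automorphisms extend because $\mu$ is central in the centralizer of the Newton cocharacter, and uniqueness is a weight/rigidity argument using the element $n\nu_b(p)$ (the Frobenius of $x$ over its finite field of definition), which acts with nontrivial weights on the opposite unipotent $U_G^\circ$. For parts (2) and (3), however, your proposal misses the two inputs that make the Hodge-type case work, where $\widehat{U}_x$ has no moduli interpretation. First, for (2) you reduce the problem to ``checking that the tensors $s_\alpha$ are compatible with the extension structure,'' but you give no mechanism for proving that the $G$-adapted locus is actually a subgroup (over $0$) and a coset (over other points) of the relevant Ext groups of $p$-divisible groups. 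The paper does this by exploiting formal smoothness of $R_G$ to reduce to $\mathcal{O}_L$-points, and then computing Baer sums explicitly via Fontaine--Laffaille theory (Honda systems): the filtration of the sum of two extensions with unipotent parameters $u,u'$ is the translate by $u+u'$, which again lies in $U_G^\circ(\mathcal{O}_L)$; this linearity is what yields the subgroup/coset property. Also note that ``shifted'' in ``shifted subcascade'' refers to fibers being cosets (translates) of subgroups, not to a Tate twist, and the cascade groups are $p$-divisible formal groups $\mathrm{Ext}(\mathscr{G}^{\can}_j,\mathscr{G}^{\can}_i)$, not twisted formal tori.

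Second, in (3) you assert that torsion points of ``each cascade group'' are dense and conclude density of CM points, but the points you need are torsion points lying in the subcascade $\widehat{U}_x$, whose fibers are a priori proper formal subgroups (or cosets) of the Ext groups; such a subgroup need not have dense torsion unless it is itself $p$-divisible, and nothing in your plan establishes this. The paper's proof of $p$-divisibility is a genuinely separate input: since all $\mu$-ordinary points have isomorphic $p$-divisible groups (Corollary of Wortmann's group-theoretic result) and the $\mu$-ordinary locus is open and dense, the mod-$p$ fiber of $\Spf R_G$ lies in a central leaf, and Chai's theorem identifies the central leaf of the equicharacteristic Ext space with its maximal $p$-divisible subgroup; this forces the fibers of the subcascade to be $p$-divisible (hence to have dense torsion). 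Finally, to propagate density to fibers over a nonzero torsion point of a lower level, the paper needs an additional isogeny argument (dividing the extension class by suitable powers of $p$ to move a torsion point to the canonical lift), which is absent from your outline. Without these steps your induction in (3) does not close.
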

	
	The $\mu$-ordinary locus is the correct analogue of the ordinary locus of $\mathscr{A}_g$ in our setting. This notion was introduced by Rapoport (during the 1996 Langlands conference). See also Wortmann \cite{Wo}. We refer to \S 6 for the precise definitions. The special lift constructed in Theorem \ref{main} is the analogue of the Serre--Tate canonical lift and the notion of shifted cascade is the correct generalization of the formal group structure to our context.
	
	\subsection{Past results and related work}
	These results have been obtained by Moonen \cite{Mo} in the case of PEL type Shimura varieties. However, the Shimura varieties we consider do not have a moduli interpretation unlike in the PEL case. Therefore our techniques are necessarily very different, and our proofs are often simpler than in the PEL case.
	
	In the special case when $\pdiv_x$ has two slopes, the shifted subcascade is actually a $p$-divisible formal group. When the two slopes are zero and one (that is, when the special fiber intersects the ordinary locus of $\mathscr{A}_g$), Theorem \ref{main} is known due to work of  Noot (\cite{Noot}). His proof crucially uses work on ordinary F-crystals due to Deligne and Illusie (\cite{DI}). 
	
	In a similar direction, Chai in \cite{Chai} considers the equicharacteristic-$p$ deformation space of a $p$-divisible group in the two-slope case. He proves that the formal neighbourhood of the Newton stratum inside the deformation space has the structure of a formal group, and that the completion of the central leaf (see {\it loc. cit.} for the definitions of central and isogeny leaves) corresponds to the $p$-divisible formal part of this formal group. Chai and Oort also study formal neighbourhoods of central leaves in greater generality than in just the two-slope case. Hamacher in \cite{Ham} obtains Serre-Tate coordinates for central leaves in the unramified Hodge-type case. We note that these are purely equi-characteristic $p$ results.
	
		Some of the results in our paper have also been obtained by Hong in \cite{Hong} for the two-slope case using a different method.
	\subsection{Outline of the proof}
	
	Our proofs rely on the following three inputs:
	\begin{enumerate}
		
		\item The introduction of group-theoretic techniques to prove certain results on crystals with $G$-structure. This simplifies and generalizes some results in Chapter 1 of \cite{Mo}.
		
		\item The smoothness of the integral models which allows us to check certain properties on the level of $\Ok_L$-points.
		
		\item Fontaine--Laffaile theory, which allows us to compute with $\Ok_L$-points of the Shimura variety. 
	\end{enumerate}
	We now sketch in detail the proof of Theorem \ref{main} in the two-slope case. Suppose that the isoclinic parts of $\pdiv_x$ are $\pdiv_1$ and $\pdiv_2$. We first prove that the slope filtration of the $p$-divisible group extends to the family of $p$-divisible groups over $\widehat{U}_x$. The precise result is stated as Proposition \ref{slopefilt};  the proof of this uses the explicit description of the deformation space in \cite{Ki1}. It follows that $\widehat{U}_x$ is a formal subscheme of an extension group. To prove that $\widehat{U}_x$ is a formal subgroup, we use its smoothness to reduce the problem to the case of $\Ok_L$-points. By Grothendieck-Messing theory, every $\Ok_L$-point of $\widehat{U}_x$ corresponds to a filtration of the Dieudonn\'e module of $\pdiv_x$, and therefore a Honda system.
	
	Using work of Kisin (\cite{Ki2}), it is possible to exactly characterize the filtrations which correspond to $\Ok_L$-points of $\widehat{U}_x$. Indeed, the filtrations which arise correspond to a $\Ok_L$-points of a very natural unipotent subgroup $U_G \subset G$ associated to the point $x$. To finish the proof, we use Fontaine--Laffaille theory to compute the Baer sum of two such Honda systems, and prove that this is compatible with the group structure on $U_G(\Ok_L)$. It then follows that the resulting sum lies in $\widehat{U}_x$, as required.

	\subsection{Organization of the paper}
	We now give an outline of the paper. In sections 2 and 3, we recall some group theoretic results and use this to study the $G_{\Ok_L}$-adapted deformations of a $\mu$-ordinary $p$-divisible group with $G$-structure.  The main result in this section of a canonical lifting of the $p$-divisible which is characterized in terms of its automorphisms. For this we use a result of Wortmann \cite{Wo} on $F$-crystals with $G$-structure. Using this, the lift can then be constructed using Grothendieck-Messing theory. We also prove that there is a slope filtration on $p$-divisible groups lying in the $\mu$-ordinary locus. 
	
	In section 4, we recall the definition of the subspace $\Spf R_G$ of the universal deformation space of a $p$-divisible group with $G$-structure which cuts out the $G_{\Ok_L}$-adapted liftings. In the case when the $p$-divisible group is $\mu$-ordinary, we prove the existence of a canonical slope filtration on the versal $p$-divisible group over $\Spf R_G$. In \S 5, we compute with Fontaine-Laffaile modules to prove that $\Spf R_G$ naturally has the structure of a shifted subspace of a cascade. In \S 6, we apply the previous results to study the formal neighbourhood $\widehat{U}_x$ at a $\mu$-ordinary point of a Hodge-type Shimura variety and prove Theorem \ref{main}. 
	
	Some of the results in our paper have been obtained by Hong in \cite{Hong} for the two-slope case using a different method.
	
	{\bf Acknowledgements:} It is a pleasure to thank Tom Lovering for asking a question which led to this paper. We also thank George Boxer and Mark Kisin for useful discussions involving the material in this paper. We are also very grateful to the referee whose comments helped improve the exposition of this work.

	\section{$F$-cystals with $G$-structure}
	In this section we define $F$-crystals with $G$ structure and recall some group theoretic results of \cite{Wo}. These objects naturally arise as the Dieudonn\'e module of a $p$-divisible at a mod $p$ point of a Hodge type Shimura variety.
	
	Let $\Ok_L=W(\Fpbar)$ and $L=\mbox{Frac}(\Ok_L)$. Let $\Gamma=\Gal(\overline{\Q}_p/\Q_p)$ and $\Gamma_L=\Gal(\overline{L}/L)$, then $\Gamma_L$ can be identified with the inertia subgroup of $\Gamma$. We write $\sigma$ for the Frobenius automorphism of $L$.
	
	For any module $M$ over a ring $R$, we write $M^\otimes$ for the direct sum of all tensor products, duals, symmetric and exterior powers of $M$. Let $V$ be a finite free $\Z_p$-module and let $s_{\alpha}\in V^\otimes$ be a collection of tensors whose stabilizer is a reductive group $G$ over $\Z_p$.
	
	\begin{definition}
		An $F$-crystal with $G$-structure on $V$ is the data of an injective  $\sigma$-linear map $$\varphi:V\otimes_{\Z_p}\Ok_L\rightarrow V\otimes_{\Z_p}\Ok_L[\frac{1}{p}]$$
		such that $\varphi(s_\alpha)=s_{\alpha}$.
	\end{definition}
	Given $\varphi_1, \varphi_2$, $F$-crystals with $G$-structure on $V$, an isomorphism between $\varphi_1$ and $\varphi_2$ is an  $\Ok_L$-linear isomorphism $\theta:V\otimes_{\Z_p}\Ok_L\cong V\otimes_{\Z_p}\Ok_L$ such that $\theta(s_\alpha)=s_\alpha$ and we have a commutative diagram
	
\[\xymatrix{	V\otimes_{\Z_p}\Ok_L \ar[r]^{\varphi_1} 	\ar[d]_{\theta} & V\otimes_{\Z_p}\Ok_L[\frac{1}{p}]\ar[d]^{\theta}\\
		V\otimes_{\Z_p}\Ok_L \ar[r]^{\varphi_2} & V\otimes_{\Z_p}\Ok_L[\frac{1}{p}]
	}\]

	When the set of $s_{\alpha}$ is empty, $G=GL(V)$ and we recover the definition of an $F$-crystal, where negative slopes are allowed. 
	
	Given $\varphi$ an $F$-crystal with $G$-structure on $V$, since $\varphi(s_\alpha)=s_\alpha$, we may write $\varphi=b\sigma$ for some $b\in G(L)$. We define an equivalence relation on $G(L)$ by letting $g_1\sim g_2$ if there exists $h\in G(\Ok_L)$ such that $g_1=h^{-1}g_2\sigma(h)$, and we write $G(L)/G(\Ok_L)_\sigma$ to be the set of equivalence classes.
	
	The following proposition is immediate from the definitions.
	
	\begin{prop}
		Let $G$ be as above. 
		\begin{enumerate}
			\item The association $\varphi\mapsto b\in G(L)$ as above defines a bijection between $F$-crystals with $G$-structure on $V$ and the set $G(L)/G(\Ok_L)_\sigma$.
			
			\item The forgetful functor from $F$-crystals with $G$ structure on $V$ to $F$-crystals on $V$ is faithful.
		\end{enumerate}
	\end{prop}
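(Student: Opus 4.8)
The proposition has two parts, both of which should follow almost formally from the definitions; the phrasing ``immediate from the definitions'' in the excerpt confirms this. For part (1), I would proceed as follows. Given an $F$-crystal with $G$-structure $\varphi : V \otimes_{\Z_p} \Ok_L \to V \otimes_{\Z_p} \Ok_L[\tfrac1p]$, the key observation is that $\varphi$ and $\mathrm{id}\otimes\sigma$ both fix all the tensors $s_\alpha$, so the composite $b := \varphi \circ (\mathrm{id}\otimes\sigma)^{-1}$ is an $\Ok_L[\tfrac1p]$-linear automorphism of $V \otimes_{\Z_p} \Ok_L[\tfrac1p]$ fixing each $s_\alpha$. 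Since $G$ is the scheme-theoretic stabilizer of $(s_\alpha)$ in $\mathrm{GL}(V)$, this means $b \in G(\Ok_L[\tfrac1p]) = G(L)$, and by construction $\varphi = b\sigma$. I would then check that changing $\varphi$ within its isomorphism class changes $b$ within its $\sigma$-conjugacy class: if $\theta \in G(\Ok_L)$ intertwines $\varphi_1 = b_1\sigma$ and $\varphi_2 = b_2\sigma$ as in the commutative diagram, then $\theta b_1 \sigma = b_2 \sigma \theta = b_2 \sigma(\theta)\sigma$, whence $b_1 = \theta^{-1} b_2 \sigma(\theta)$, i.e.\ $b_1 \sim b_2$; conversely any such relation produces an isomorphism. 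This shows the map $\varphi \mapsto b$ is well-defined on isomorphism classes and injective. Surjectivity is trivial: any $b \in G(L)$ gives $\varphi := b\sigma$, which is injective ($\sigma$-linear and invertible over $L$) and fixes the $s_\alpha$ since both $b$ and $\sigma$ do.

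**Part (2).** For faithfulness of the forgetful functor, I would simply note that a morphism of $F$-crystals with $G$-structure is by definition an $\Ok_L$-linear isomorphism $\theta : V\otimes_{\Z_p}\Ok_L \to V\otimes_{\Z_p}\Ok_L$ satisfying $\theta(s_\alpha) = s_\alpha$ and the compatibility with $\varphi_i$; forgetting the $G$-structure just forgets the condition $\theta(s_\alpha)=s_\alpha$ but remembers $\theta$ itself. Hence two morphisms with the same underlying morphism of $F$-crystals are equal as set maps, which is precisely faithfulness. (Note the functor is typically not full, since not every $\theta$ compatible with the $\varphi_i$ need fix the tensors — I would not claim fullness.)

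**Main obstacle.** There is essentially no serious obstacle here; the only point requiring mild care is the identification ``$b$ fixes all $s_\alpha$ $\Rightarrow$ $b \in G(L)$,'' which uses that $G$ is defined over $\Z_p$ as the stabilizer scheme of the $s_\alpha$, so that $G(R) = \{g \in \mathrm{GL}(V\otimes_{\Z_p} R) : g(s_\alpha) = s_\alpha\ \forall\alpha\}$ functorially in $\Z_p$-algebras $R$, applied with $R = L$. One should also be slightly careful that ``$\varphi(s_\alpha) = s_\alpha$'' is being interpreted via the induced $\sigma$-linear map on $V^\otimes \otimes_{\Z_p}\Ok_L$ and that the $s_\alpha \in V^\otimes$ are $\sigma$-invariant (being defined over $\Z_p$), so that the condition indeed translates to $b$ fixing $s_\alpha$ under the $L$-linear action on $(V\otimes_{\Z_p} L)^\otimes$. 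Once these bookkeeping points are in place, the proof is a two-line unwinding of definitions, so I would keep the write-up correspondingly brief.
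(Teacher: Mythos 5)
Your argument is correct and is exactly the definition-unwinding the paper has in mind (the paper simply declares the proposition ``immediate from the definitions'' and gives no further proof); your identification $b=\varphi\circ(\mathrm{id}\otimes\sigma)^{-1}\in G(L)$ and the computation $b_1=\theta^{-1}b_2\sigma(\theta)$ match the intended argument precisely. No gaps.
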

	
	The first part of the proposition tells us that in  order to study $F$-crystals with $G$ structure, it suffices to study the $\sigma$-conjugation action of $G(\Ok_L)$ on $G(L)$.
	
	\subsection{} Let $G$ be a connected reductive group over $\Z_p$, in particular $G_{\Q_p}$ is unramified. Fix a maximal torus $T$ of $G$ which splits over an unramified extension of $\Z_p$ and $B\supset T$ be a Borel subgroup. 
	
	For $b\in G(L)$, let $\nu_b$ denote the Newton cocharacter of $b$, and let $\overline{\nu}_b \in (X_*(T)\otimes_\Z\Q)^+$ denote the dominant representative of the conjugacy class of $\nu_b$. Let $$\kappa_G:G(L)\rightarrow \pi_1(G)_\Gamma$$ denote the Kottwitz homomorphism.
	By \cite{Ko1}, the map $$b\mapsto (\overline{\nu}_b,\kappa_G(b))\in (X_*(T)\otimes_\Z\Q)^+\times \pi_1(G)_\Gamma$$
	factors theough $B(G)$, the set of $\sigma$-conjugacy classes in $G(L)$, and the induced map \begin{equation}\label{Kottwitz} B(G)\rightarrow (X_*(T)\otimes_\Z\Q)^+\times \pi_1(G)_\Gamma\end{equation} is injective.
	
	Let $E\subset L$ be a finite unramified Galois extension of $\Q_p$ over which $T$ splits and for any dominant $\mu\in X_*(T)$ set $$\overline{\mu}=\frac{1}{[E:\Q_p]}\sum_{\tau\in \text{Gal}(E/\Q_p)}\tau(\mu)$$
	We denote by $[\mu]$ the  image of $\mu$ in $\pi_1(G)_\Gamma$.
	
	Define the set $$X^G_\mu(b)=\{g\in G(L)/G(\Ok_L):g^{-1}b\sigma(g)=G(\Ok_L)\mu(p)G(\Ok_L)\}$$
	Similarly, if $M$ is a standard Levi of $G$ defined over $\Z_p$, we define
	$$X^M_\mu(b)=\{g\in M(L)/M(\Ok_L):g^{-1}b\sigma(g)=M(\Ok_L)\mu(p)M(\Ok_L)\}$$
	
	A result of Gashi \cite{Ga} implies that $X^G_\mu(b)$ is non-empty if and only if $$\kappa_G(b)=[\mu], \ \ \ \overline{\nu}_b\leq \overline{\mu}$$
	where for $\mu,\mu'\in (X_*(T)\otimes_\Z\Q)^+$ two dominant cocharacters, we write $\mu\leq\mu'$ if $\mu'-\mu$ is a positive rational linear combination of positive coroots, and $[\mu]$ denotes the image of $\mu$ in $\pi_1(G)_\Gamma$.

	Now suppose $b\in G(L)$ is such that $\nu_b\in X_*(T)$ is dominant, hence defined over $\Z_p$. Let $M_b$ denote the centralizer of $\nu_b=\overline{\nu}_b$ in $G$, then $b\in M_b(L)$ and in particular $b$ is a basic element in $M_b$. Under these assumptions, we have the following which is Proposition 2.5.4. of \cite{CKV}.
	
	\begin{prop}Let $M\supset M_b$ be a standard Levi defined over $\Z_p$. Assume that $\kappa_M(b)=[\mu]$ in $\pi_1(M)_\Gamma$, then the natural map $X_{\mu}^M(b)\rightarrow X^G_\mu(b)$ is a bijection.
	\end{prop}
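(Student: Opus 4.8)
The plan is to treat well-definedness and injectivity as formalities, and to reduce surjectivity to the group-theoretic Hodge--Newton estimate, which will be the crux. For well-definedness: $\mu$ is $G$-dominant, hence $M$-dominant, so $M(\Ok_L)\mu(p)M(\Ok_L)\subseteq G(\Ok_L)\mu(p)G(\Ok_L)$ and $gM(\Ok_L)\mapsto gG(\Ok_L)$ sends $X^M_\mu(b)$ into $X^G_\mu(b)$. For injectivity I would use that a standard Levi $M$ is a reductive closed subgroup scheme of $G$ over $\Z_p$: since $\Ok_L$ is a domain, a morphism $\Spec\Ok_L\to G$ factors through $M$ as soon as its generic point does, so $M(L)\cap G(\Ok_L)=M(\Ok_L)$; hence if $g_1,g_2\in M(L)$ lie in the same $G(\Ok_L)$-coset then $g_2^{-1}g_1\in M(L)\cap G(\Ok_L)=M(\Ok_L)$, i.e.\ they lie in the same $M(\Ok_L)$-coset.

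The substance is surjectivity, and this is where $M\supseteq M_b$ is used. Let $P=MN$ be the standard parabolic with Levi $M$. Because $M_b=\mathrm{Cent}_G(\overline{\nu}_b)\subseteq M$, every simple root outside $M$ pairs strictly positively with the $G$-dominant cocharacter $\overline{\nu}_b$, and therefore so does every root of $N$; equivalently, $b\in M_b(L)\subseteq M(L)$ and the operator $n\mapsto b\,\sigma(n)\,b^{-1}$ on $N(L)$ is topologically contracting towards $1$, since its linearisation on $\mathrm{Lie}(N)$ is an isocrystal all of whose slopes $\langle\beta,\overline{\nu}_b\rangle$ (for $\beta$ a root of $N$) are positive. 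Given a class in $X^G_\mu(b)$, the Iwasawa decomposition $G(L)=P(L)G(\Ok_L)$ lets me choose a representative $g=mn$ with $m\in M(L)$, $n\in N(L)$; putting $b'=m^{-1}b\sigma(m)\in M(L)$ one has
\[
g^{-1}b\sigma(g)=n^{-1}b'\sigma(n)=n^{-1}\bigl(b'\sigma(n)(b')^{-1}\bigr)b'.
\]
The key claim is that, because this element lies in the bounded set $G(\Ok_L)\mu(p)G(\Ok_L)$, we must have $n\in N(\Ok_L)$: if $n$ had a pole then the contraction property would make $b'\sigma(n)(b')^{-1}$ have strictly smaller pole, so $n^{-1}\bigl(b'\sigma(n)(b')^{-1}\bigr)$ still has the pole of $n$, and multiplying on the right by $b'$ — whose size in the Cartan decomposition of $M$ is bounded by $\mu$, since $g^{-1}b\sigma(g)$ lies in the $\mu$-double coset of $G$ — cannot absorb it. Hence $gG(\Ok_L)=mG(\Ok_L)$ with $m\in M(L)$.

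To complete surjectivity I must check $mM(\Ok_L)\in X^M_\mu(b)$. Now $m^{-1}b\sigma(m)\in M(L)\cap G(\Ok_L)\mu(p)G(\Ok_L)$ lies in $M(\Ok_L)\lambda(p)M(\Ok_L)$ for a unique $M$-dominant $\lambda$, which by the Cartan decomposition in $G$ is $W_G$-conjugate to $\mu$, so $\mu-\lambda=\sum_{\alpha\in\Delta_G}c_\alpha\alpha^\vee$ with all $c_\alpha\geq 0$. Since $m^{-1}b\sigma(m)$ is $\sigma$-conjugate to $b$ in $M(L)$ we get $[\lambda]=\kappa_M\bigl(m^{-1}b\sigma(m)\bigr)=\kappa_M(b)=[\mu]$ in $\pi_1(M)_\Gamma$, so $\mu-\lambda$ lies in the (rational span of the) coroot lattice of $M$, forcing $c_\alpha=0$ for $\alpha\notin\Delta_M$; thus $\mu-\lambda$ is a non-negative combination of positive coroots of $M$, and a short comparison of $W_G$-invariant norms of $\lambda$ and $\mu$ then gives $\lambda=\mu$. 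So $m^{-1}b\sigma(m)\in M(\Ok_L)\mu(p)M(\Ok_L)$, i.e.\ $mM(\Ok_L)\in X^M_\mu(b)$, and it maps to the chosen class. The natural map is visibly inverse to the retraction $gG(\Ok_L)\mapsto mM(\Ok_L)$ produced by Iwasawa, which gives the bijection.

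The hard part will be the contraction estimate forcing $n\in N(\Ok_L)$: this is precisely the group-theoretic incarnation of the Hodge--Newton decomposition, and making the pole bookkeeping precise (especially with possibly fractional slopes on $\mathrm{Lie}(N)$, where one should argue with successive approximation or with the fact that $1$ minus a contracting map is a homeomorphism of $N(L)$) is the only genuinely non-formal step. Everything else reduces to the Cartan and Iwasawa decompositions together with the injectivity of the Kottwitz map; I expect the argument to follow the strategy of \cite{CKV}.
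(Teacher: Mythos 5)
You should first note that the paper itself gives no proof of this proposition: it is quoted from \cite{CKV} (Proposition 2.5.4), so your attempt has to be measured against the known Hodge--Newton arguments rather than an internal argument of the paper. The formal parts of your outline are fine: well-definedness, injectivity via $M(L)\cap G(\Ok_L)=M(\Ok_L)$, the Iwasawa reduction to a representative in $P(L)$, and the final step identifying $\lambda=\mu$ from the Kottwitz map plus a $W_G$-invariant norm comparison (with one repair: $[\mu]=[\lambda]$ holds only in the coinvariants $\pi_1(M)_\Gamma$, not in $\pi_1(M)$, so you must average the relation $\mu-\lambda=\sum_\alpha c_\alpha\alpha^\vee$ over the Galois orbit; since $\Gamma$ permutes $\Delta_G\setminus\Delta_M$ and the $c_\alpha$ are nonnegative, the conclusion $c_\alpha=0$ for $\alpha\notin\Delta_M$ survives).

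The genuine gap is the central claim that $n^{-1}b'\sigma(n)\in G(\Ok_L)\mu(p)G(\Ok_L)$ forces $n\in N(\Ok_L)$. The mechanism you propose never uses the hypothesis $\kappa_M(b)=[\mu]$, and without that hypothesis the claim is simply false, so no amount of ``pole bookkeeping'' along your lines can close it. Concretely, take $G=\GL_2$, $b=\mathrm{diag}(p,1)$ (so $\nu_b=(1,0)$ is dominant and $M_b=T$), $M=T$, $\mu=(2,-1)$, and $n=\left(\begin{smallmatrix}1&p^{-1}\\0&1\end{smallmatrix}\right)$: then $n^{-1}b\sigma(n)=\left(\begin{smallmatrix}p&1-p^{-1}\\0&1\end{smallmatrix}\right)$ has $G$-Cartan invariant exactly $\mu$, all slopes of $\mathrm{Ad}(b)\sigma$ on $\mathrm{Lie}(N)$ are positive, and the $M$-part $b'=b$ has $M$-invariant $(1,0)\preceq\mu$ --- every input you actually invoke is present --- yet $n\notin N(\Ok_L)$ (and indeed $X^T_\mu(b)=\emptyset$ while $X^G_\mu(b)\neq\emptyset$, consistent with the proposition because $\kappa_T(b)\neq[\mu]$). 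This shows that membership in $G(\Ok_L)\mu(p)G(\Ok_L)$ bounds the pole of the unipotent Iwasawa component only \emph{relative to} the $M$-component, which can perfectly well ``absorb'' it, contrary to your assertion; any correct proof must feed $\kappa_M(b)=[\mu]$ into the integrality step itself, not only into the final identification $\lambda=\mu$. A second, smaller problem is the contraction estimate: positivity of the slopes of $\mathrm{Ad}(b')\sigma$ on $\mathrm{Lie}(N)$ gives contraction only asymptotically and with respect to \emph{some} lattice, not after a single application relative to the standard integral structure, since $b'=m^{-1}b\sigma(m)$ is an arbitrary representative; one has to first move $b$ to a decent element of $M(L)$ and argue with iterates. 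These are exactly the non-formal points carried out in \cite{CKV}, so as written the decisive step of your surjectivity argument is asserted rather than proved.
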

	
	Let $b\in G(L)$ be arbitrary. By the Cartan decomposition, there exists a dominant cocharacter $\mu\in X_*(T)$ such that $b\in G(\Ok_L)\mu(p)G(\Ok_L)$. Then we have $1\in X^G_{\mu}(b)$ so that $\overline{\nu}_b\leq\overline{\mu}$, by the above. 
	
	\begin{definition}\label{ordinary}
		We say $b$ is $\mu$-ordinary if $b\in G(\Ok_L)\sigma(\mu(p))G(\Ok_L)$ and $\overline{\nu}_b=\overline{\mu}$. Similarly we say an $F$-crystal with $G$-structure on $V$ is $\mu$-ordinary if the associated $b$ in Proposition 2.2 is $\mu$-ordinary.
	\end{definition}
	Note that the notion of $\mu$-ordinariness for $b\in G(L$) only depends on the image of $b$ in $G(L)/G(\Ok_L)_\sigma$.
	
	\subsection{}
	The following is proved in \cite[Proposition 7.2]{Wo}. As we will be making extensive use of it, we recall the proof for the convenience of the reader. We write $\upsilon$ for  the cocharacter $\sigma(\mu)$.
	\begin{prop}\label{cats}
		Let $b\in G(L)$ be $\mu$-ordinary then there exists $g\in G(\Ok_L)$ such that $g^{-1}b\sigma(g)=\upsilon(p)$. 
	\end{prop}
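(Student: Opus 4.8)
The plan is to prove the stronger statement that $b$ and $\upsilon(p)$ define the same class in $G(L)/G(\Ok_L)_\sigma$; the element $g$ is then read off from that equivalence. The first step is to see that $b$ and $\upsilon(p)$ are at least $\sigma$-conjugate in $G(L)$. By Definition \ref{ordinary} we have $b\in G(\Ok_L)\sigma(\mu(p))G(\Ok_L)=G(\Ok_L)\upsilon(p)G(\Ok_L)$, so $1\in X^G_\upsilon(b)$; by the criterion of Wintenberger recalled above this already forces $\kappa_G(b)=[\upsilon]$, and by hypothesis $\overline{\nu}_b=\overline{\mu}=\overline{\upsilon}$ — the last equality holding because averaging $\sigma(\mu)$ over $\Gal(E/\Q_p)$ gives the same element as averaging $\mu$. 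Since $\upsilon(p)$ has Newton point $\overline{\upsilon}$ and Kottwitz invariant $[\upsilon]$, the injectivity of the map \eqref{Kottwitz} gives $[b]=[\upsilon(p)]$ in $B(G)$; fix $h\in G(L)$ with $h^{-1}b\sigma(h)=\upsilon(p)$.

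Let $J_b$ be the $\sigma$-centralizer of $b$ and consider the left action of $J_b(\Q_p)$ on $X^G_\upsilon(b)$. Both the base coset $G(\Ok_L)$ and the coset $hG(\Ok_L)$ lie in $X^G_\upsilon(b)$ — the first because $b\in G(\Ok_L)\upsilon(p)G(\Ok_L)$, the second because $h^{-1}b\sigma(h)=\upsilon(p)$. Suppose we knew that $X^G_\upsilon(b)$ is a single $J_b(\Q_p)$-orbit. Then there would be $j\in J_b(\Q_p)$ with $jG(\Ok_L)=hG(\Ok_L)$; writing $j=hk_0$ with $k_0\in G(\Ok_L)$ and using $j^{-1}b\sigma(j)=b$ we would get $k_0^{-1}\upsilon(p)\sigma(k_0)=b$, so that $g:=k_0^{-1}\in G(\Ok_L)$ satisfies $g^{-1}b\sigma(g)=\upsilon(p)$. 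Hence the proposition follows once we show that $X^G_\upsilon(b)$ is a single $J_b(\Q_p)$-orbit; and this assertion depends only on the $\sigma$-conjugacy class of $b$ in $G(L)$, since replacing $b$ by $c^{-1}b\sigma(c)$ transports $X^G_\upsilon(b)$ and its $J_b(\Q_p)$-action compatibly via left translation by $c^{-1}$.

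We may therefore take $b=\upsilon(p)$. Let $M:=M_b$ be the centralizer of $\overline{\nu}_b=\overline{\mu}$, a standard Levi defined over $\Z_p$; then $b=\upsilon(p)$ lies in $T(L)\subset M(L)$, is basic in $M$, and satisfies $\kappa_M(b)=[\upsilon]$ in $\pi_1(M)_\Gamma$. Since $J_b(\Q_p)\subset M(L)$, it equals the $\sigma$-centralizer of $b$ in $M$, and Proposition 2.5.4 of \cite{CKV}, applied to $M$ and the (still dominant) cocharacter $\upsilon$, identifies $X^M_\upsilon(b)$ with $X^G_\upsilon(b)$ compatibly with the $J_b(\Q_p)$-action. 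So it suffices to show that $X^M_\upsilon(b)$ is a single $J_b(\Q_p)$-orbit.

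This final step is the heart of the matter and the step I expect to be the main obstacle. Here $b=\upsilon(p)$ is basic in $M$ and $\overline{\upsilon}=\overline{\nu}_b$ is central in $M$, so $X^M_\upsilon(b)$ is a $0$-dimensional affine Deligne--Lusztig set, and what must be shown is that the $\sigma$-centralizer of $b$ acts transitively on it. I would argue as in the case of a split torus: given a coset $gM(\Ok_L)\in X^M_\upsilon(b)$, write $g^{-1}\upsilon(p)\sigma(g)=k_1\upsilon(p)k_2$ with $k_i\in M(\Ok_L)$, and use the centrality of the Newton cocharacter together with Lang's theorem for the smooth $\Ok_L$-group scheme $M$ to modify $g$ successively on the left by $J_b(\Q_p)$ and on the right by $M(\Ok_L)$ until it becomes the base coset. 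The delicate points are precisely those attached to the reduction to the basic Levi — tracking the invariant in $\pi_1(M)_\Gamma$, and accommodating the possibility that $\overline{\nu}_b\notin X_*(T)$ when one quotes the results on reduction to Levi subgroups — but once $b$ is basic in $M$ the question is governed entirely by $\pi_1(M)_\Gamma$ and reduces to a Galois-cohomology computation.
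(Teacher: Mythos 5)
Your reductions are sound and in fact retrace the paper's own route: equality of Kottwitz invariant and Newton point plus injectivity of \eqref{Kottwitz} gives $\sigma$-conjugacy of $b$ and $\upsilon(p)$ in $G(L)$, and the passage from $X^G_\upsilon$ to $X^M_\upsilon$ via \cite[Prop.\ 2.5.4]{CKV} is exactly the paper's second step, merely repackaged as the statement that $J_b(\Q_p)$ acts transitively on $X^G_\upsilon(b)$ (and your bookkeeping showing that such transitivity yields the desired $g\in G(\Ok_L)$ is correct). The problem is the step you yourself flag as the heart of the matter: transitivity of $J_b(\Q_p)$ on $X^M_\upsilon(\upsilon(p))$. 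The ingredients you propose there --- that $b$ is basic in $M$, that the Newton cocharacter is central in $M$, and Lang's theorem --- are not sufficient. Centrality of $\nu_b$ in $M$ is automatic ($M$ is its centralizer) and basicness alone does not make a hyperspecial-level affine Deligne--Lusztig set zero-dimensional or a single $J_b(\Q_p)$-orbit: basic $X^M_\upsilon(b)$ is in general positive-dimensional (Lubin--Tate and Drinfeld type examples), so the closing claim that "once $b$ is basic in $M$ the question is governed entirely by $\pi_1(M)_\Gamma$ and reduces to a Galois-cohomology computation" is false as a general principle.

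What is actually needed, and what your sketch never establishes, is that the cocharacter $\upsilon$ itself (not merely its Galois average $\overline{\upsilon}=\overline{\nu}_b$) is central in $M$. This is the content of Lemma \ref{central}: a dominant cocharacter whose $\sigma$-average is central is itself central, proved by the positivity argument $\langle\alpha,\sigma^i(\upsilon)\rangle\geq 0$. Without it, in your manipulation $g^{-1}\upsilon(p)\sigma(g)=k_1\upsilon(p)k_2$ you cannot move $k_1$ past $\upsilon(p)$ while staying in $M(\Ok_L)$, conjugation by $\upsilon(p)$ does not preserve the integral structure, and the proposed "successive modification" has no reason to terminate at the base coset. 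With the lemma, the argument collapses in one step: $M(\Ok_L)\upsilon(p)M(\Ok_L)=\upsilon(p)M(\Ok_L)$, so $g^{-1}\upsilon(p)\sigma(g)=\upsilon(p)h$ with $h\in M(\Ok_L)$, and Lang's theorem for $M$ over $\Ok_L$ produces $m'\in M(\Ok_L)$ with $m'\sigma(m')^{-1}=h$, whence $gm'\in J_b(\Q_p)$ and the coset $gM(\Ok_L)$ lies in the orbit of the base point --- which is precisely how the paper concludes. So the missing idea is Lemma \ref{central}; once you insert it, your argument closes and is essentially the paper's proof.
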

	
	\begin{remark}
		\begin{enumerate}
			\item By injectivity of the map \ref{Kottwitz}, it follows that under the assumptions of the proposition, there exists $g\in G(L)$ such that $g^{-1}b\sigma(g)=\upsilon(p)$ (this is because the Newton vector of $\upsilon(p)$ is just $\overline{\mu}$ and $\kappa_G(b)=[\upsilon]=\kappa_G(\upsilon(p))$).

		\end{enumerate}
	\end{remark}

	The following Lemma will be needed in the proof of Proposition \ref{cats}
	\begin{lemma}\label{central}Let $\mu\in X_*(T)$ be a dominant cocharacter such that $\overline{\mu}$ is central in $G$. Then $\mu$ is central in $G$.
	\end{lemma}
	\begin{proof}A cocharacter $\mu\in X_*(T)$ is central if and only if $\langle\alpha,\mu\rangle=0$ for all positive roots $\alpha$.
		
		Now note that $\sigma^i(\mu)$ is dominant for all $i$ since $\mu$ is dominant and $\sigma$ preserves the dominant chamber. Therefore $$\langle\alpha,\sigma^i(\mu)\rangle\geq 0$$ for all positive roots $\alpha$. Then since $$0=\langle\alpha,\overline{\mu}\rangle=\langle\alpha,\frac{1}{n}(\mu+\sigma(\mu)+...+\sigma^{n-1}(\mu))\rangle\geq 0$$
		for all positive roots $\alpha$, we have  $\langle\alpha,\mu\rangle=0$ for all positive roots $\alpha$, i.e. $\mu$ is central.
	\end{proof}

	\begin{proof}[Proof of Proposition \ref{cats}]
		Let $\delta=\upsilon(p)$. Then $$\overline{\nu}_\delta=\nu_\delta=\overline{\mu}=\overline{\nu}_b$$ where the second equality follows from functoriality and the explicit description of the Newton vector for a torus in \cite{Ko1}, the first equality follows since $\nu_\delta$ is dominant and the last is by definition of $\mu$-ordinariness. We also have $\kappa_G(\delta)=\kappa_G(b)$. Thus $b$ and $\delta$ are $\sigma$-conjugate in $G(L)$. Let $g\in G(L)$ such that $b=g^{-1}\delta\sigma(g)$, then $g\in X_{\upsilon}(\delta)$.
		
		Let $M$ denote the centralizer of $\nu_\delta$, then we have $\delta\in M(L)$. Since $[\upsilon]=\kappa_M(\delta)$, we have by Proposition 2.5.4 of \cite{CKV}, that the natural map $X_\upsilon^M(\delta)\rightarrow X^G_\upsilon(\delta)$, is a bijection. Thus $g=mk$ for some $m\in X^M_\upsilon(\delta)$ and $k\in G(\Ok_L)$, so that 
		$$b=k^{-1}m^{-1}\delta\sigma(m)\sigma(k).$$ 
		
		By $\sigma$-conjugating $b$ by $k$, we may assume that $$b=m^{-1}\delta\sigma(m)=m_1\upsilon(p)m_2\in M(\Ok_L)\upsilon(p)M(\Ok_L)$$
		
		Since $\upsilon$ is $G$-dominant, hence $M$-dominant, and $\overline{\upsilon}=\overline{\mu}$ is central in $M$, by Lemma \ref{central} we have $\upsilon$ is central in $M$. Therefore  $$b=m_1\upsilon(p)m_2=\upsilon(p)h$$ for some $h\in M(\Ok_L)$. By Lang's theorem for $M$, there exists $m'\in M(\Ok_L)$ such that $m'\sigma(m')^{-1}=h$. Therefore $$m'^{-1}b\sigma(m')=m'^{-1}\upsilon (p)h\sigma(m')=\upsilon(p)m'^{-1}h\sigma(m')=\upsilon(p)$$
		
		Since $M(\Ok_L)\subset G(\Ok_L)$, we are done.
		
	\end{proof}
	\subsection{}Let $V$ and $s_\alpha\in V^\otimes$ be as in \S 2.1 such that $G$ is the stabilizer of $s_{\alpha}$. Then the previous proposition can be regarded as a statement about $\mu$-ordinary $F$-crystals with $G$ structure. Indeed we have the following corollary which follows from Proposition 2.2 and Proposition 2.5
	
	\begin{cor}\label{cortwo}
		Let $\varphi':V\otimes_{\Z_p}\mathcal{O}_L\rightarrow V\otimes_{\Z_p}\Ok_L[\frac{1}{p}]$ be a $\mu$-ordinary $F$-crystal with $G$ structure on $V$. Then $\varphi'$ is isomorphc the $F$-crystal with $G$-structure on $V$ given by $\varphi=\upsilon(p)\sigma$. In particular the underlying $F$-crystals are isomorphic.
	\end{cor}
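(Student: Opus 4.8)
The goal is Corollary \ref{cortwo}: a $\mu$-ordinary $F$-crystal with $G$-structure on $V$ is isomorphic to the one defined by $\varphi = \upsilon(p)\sigma$, and the underlying $F$-crystals agree. The plan is to simply unwind the dictionary between $F$-crystals with $G$-structure and $\sigma$-conjugacy data that was set up in Proposition 2.2, and then feed in the normalization result of Proposition \ref{cats}.

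First I would note that by Proposition 2.2(1), the $\mu$-ordinary $F$-crystal $\varphi'$ corresponds to some $b \in G(L)$, well-defined up to $\sigma$-conjugation by $G(\Ok_L)$, and that by Definition \ref{ordinary} the element $b$ is $\mu$-ordinary in the group-theoretic sense. Applying Proposition \ref{cats} to this $b$, there exists $g \in G(\Ok_L)$ with $g^{-1} b \sigma(g) = \upsilon(p)$. Interpreting this back through the equivalence relation on $G(L)$, the class of $b$ in $G(L)/G(\Ok_L)_\sigma$ equals the class of $\upsilon(p)$, so by the bijection in Proposition 2.2(1) the $F$-crystals with $G$-structure $\varphi'$ and $\varphi = \upsilon(p)\sigma$ are isomorphic. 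Concretely, the element $g \in G(\Ok_L)$ gives the required $\Ok_L$-linear isomorphism $\theta: V\otimes_{\Z_p}\Ok_L \to V\otimes_{\Z_p}\Ok_L$: it preserves the tensors $s_\alpha$ since $g \in G(\Ok_L)$ stabilizes them, and the identity $\varphi' \circ g = g \circ (\upsilon(p)\sigma)$ (equivalently $b\sigma(g) = g\upsilon(p)\sigma$, i.e. $b\sigma(g) = g\upsilon(p)$ after comparing $\sigma$-linear parts) is exactly the relation $g^{-1}b\sigma(g) = \upsilon(p)$.

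Finally, for the last sentence, the forgetful functor from $F$-crystals with $G$-structure to $F$-crystals is given on objects by simply dropping the condition $\varphi(s_\alpha) = s_\alpha$, and an isomorphism $\theta$ in the category with $G$-structure is in particular an $\Ok_L$-linear isomorphism intertwining $\varphi'$ and $\varphi$; so the underlying $F$-crystals are isomorphic via the same $\theta$. This is essentially a formal consequence, using Proposition 2.2(2) only for bookkeeping.

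I do not expect any genuine obstacle here: the corollary is purely a translation of Proposition \ref{cats} (the real content) through the equivalence of Proposition 2.2. The only points requiring a little care are matching conventions — making sure the $\sigma$-conjugation relation $g_1 = h^{-1} g_2 \sigma(h)$ used in the definition of $G(L)/G(\Ok_L)_\sigma$ lines up with the relation $g^{-1}b\sigma(g) = \upsilon(p)$ produced by Proposition \ref{cats}, and that writing $\varphi = b\sigma$ with $b \in G(L)$ is compatible with reading off $\theta$ from $g$. These are routine sign/convention checks rather than substantive mathematics.
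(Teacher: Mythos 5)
Your proof is correct and is exactly the paper's argument: the paper deduces the corollary directly from the bijection of Proposition 2.2 together with Proposition \ref{cats}, and your write-up just spells out that translation (with $g\in G(\Ok_L)$ serving as the isomorphism $\theta$). The convention checks you flag do line up, so nothing further is needed.
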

	
	\subsection{} Given a $\mu$-ordinary $F$-crystal with $G$-structure on $V$, we may pick a representative in its isomorphism class such that $\varphi=\upsilon(p)\sigma$. The Newton cocharacter $\nu=\overline{\nu}$ gives a decomposition of $ V\otimes_{\Z_p}\Ok_L= \bigoplus_{i\in \Q} L_i$, where $L_i$  is part of $V\otimes_{\Z_p}\Ok_L$ where $\nu$ acts by weight $i$ on $L_i$. Since $\nu$ is fixed by $\upsilon(p)\sigma$, this induces a decomposition of the underlying $F$-crystal. We call this the slope decomposition.
	
	\section{Applications to $p$-divisible groups}
	In this section we study the $G_{\Ok_L}$-adapted deformations of a $p$-divisible group equipped with tensors in its Dieudonn\'e module. When $\pdiv$ is $\mu$-ordinary we show the existence of a canonical lift which has a characterization in terms of its automorphisms. We being by recalling the notion of $p$-divisible groups with $G$ structure and $G_{\Ok_L}$-adapted deformations following \cite{Ki2}. As before $G$ is a connected reductive group over $\Z_p$, with $B$ a Borel subgroup of $G$ containg a maximal torus $T$.
	
	\subsection{}Given a $p$-divisible $\pdiv$ over a ring $R$, we write $\D(\pdiv)$ for the contavariant Dieudonn\'e crystal associated to $\pdiv$ as in \cite{Me}. When $\pdiv$ is defined over $\Fpbar$, we also write $\D(\pdiv)=\D(\pdiv)(\Ok_L)$ for its contravariant Diedonn\'e module when there is no risk of confusion.

	\begin{definition}
		A $p$-divisible group with $G$-structure is the data of a $p$-divisible group  $\pdiv$ over $\overline{\F}_p$ and Frobenius-stable tensors $s_{\alpha,0} \in \D(\pdiv)^{\otimes}$, such that:
		\begin{enumerate}
			\item The exists a finite free $\Z_p$-module $V$ together with an $\Ok_L$- linear isomorphism $$V\otimes_{\Z_p}\Ok_L\cong \D(\pdiv)$$ such that $s_{\alpha,0}\in V^\otimes$. Moreover $G\subset GL(V)$ is the stabilizer of the tensors $s_{\alpha,0}$.
			
			\item The stabilizer of the $s_{\alpha,0}$ in $GL(V)$ is $G$.
		\end{enumerate}
	\end{definition}
	Let $G_{\Ok_L}\subset GL(\D(\pdiv))$ denote the stabilizer of $s_{\alpha}$, then an isomorphism $V\otimes_{\Z_p}\Ok_L\cong \D(\pdiv)$ gives an identification of $G_{\Ok_L}\cong G\otimes_{\Z_p}\Ok_L$. Changing the $V\otimes_{\Z_p}\Ok_L\cong \D(\pdiv)$, changes the identification $V\otimes_{\Z_p}\Ok_L\cong \D(\pdiv)$ by conjugation by an element of $G(\Ok_L)$. 
	
	Let $\pdiv$ be a $p$-divisible group with $G$-structure. The Dieudonn\'e module $\D(\pdiv)$ of a $p$-divisible group with $G$ structure is an $F$-crystal with $G$-structure on $V$. Note that picking a different isomorphism $V\otimes_{\Z_p}\Ok_L\cong \D(\pdiv)$ will give an isomorphic $F$-crystal with $G$-struture.  We say that $\pdiv$ is $\mu$-ordinary if the associated $F$-crystal with $G$-structure is $\mu$-ordinary. 
	
	\subsection{}Let $\pdiv$ be a $p$-divisible group with $G$-structure. We now recall the notion of $G_{\Ok_L}$-adapted liftings of $\pdiv$. Let $K/L$ be a finite extension. Let $E(u) \in \Ok_L[u]$ be the Eisenstein polynomial of a fixed uniformiser $\pi$ in $K$. Define the ring $S$ to be the $p$-adic completion of $\Ok_L[u, E(u)/n!]_{n \geq 1}$. Then for $p>2$ the map $S\rightarrow \Ok_K$ is equipped with topologically nilpotent divided powers. The following definition appears in \cite[\S 1.1.8]{Ki2}:
	
	\begin{definition}\label{Gadapted}
		Let $\tilde{\pdiv}$ be a deformation of $\pdiv$ to $\Ok_K$. We say that $\tilde{\pdiv}$ is $G_{\Ok_L}$-adapted if there exist Frobenius invariant tensors $\tilde{s}_{\alpha} \in \D(\tilde{\pdiv})(S)^{\otimes}$ lifting $s_{\alpha,0}$, which define a reductive subgroup $G_S$ and such that the images of the $\tilde{s}_{\alpha}$ in $\D(\tilde{\pdiv})(\Ok_K)^{\otimes}$ are in $\mathrm{Fil}^0(\D(\tilde{\pdiv})(\Ok_K)^{\otimes})$.
	\end{definition} 
	
	We have the following which is \cite[Proposition 1.1.10]{Ki2}
	
	\begin{prop}
		Let $\Spf R$ be the universal deformation space of the underlying $p$-divisible group $\pdiv$. There exists a formally smooth quotient $R_G$ of $R$ such that for any $\varpi:R\rightarrow K$, the induced $p$-divisible group $\pdiv_{\varpi}$ is $G_{\Ok_L}$-adapted if and only if $\varpi$ factors through $R_G$.
	\end{prop}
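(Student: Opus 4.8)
The plan is to make the universal deformation space $\Spf R$ completely explicit via Grothendieck--Messing theory and then to identify the $G_{\Ok_L}$-adapted locus with the formal completion of a unipotent subgroup of $G$. Write $M=\D(\pdiv)(\Ok_L)$ and let $\Fil^1M_0\subset M_0:=M\otimes_{\Ok_L}\Fpbar$ be the Hodge filtration; as part of the $G$-structure it is induced by the reduction of a cocharacter $\mu\colon\Gm\to G_{\Ok_L}$, which is minuscule since the Hodge filtration has length one. Fixing a lift of $\mu$, take $\Fil^1M$ to be the $\mu$-weight-one summand $M^1$, and note that $s_{\alpha,0}$ has $\mu$-weight $0$ (because $\mu$ factors through $G$, which fixes the $s_{\alpha,0}$), so $s_{\alpha,0}\in\Fil^0(M^\otimes)$. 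Let $U^-\subset\GL(M)$ be the opposite unipotent radical of the parabolic stabilising $\Fil^1M$, and put $U_G^-:=U^-\cap G_{\Ok_L}$, the opposite unipotent radical of the parabolic $P_\mu\subset G_{\Ok_L}$; since $\mu$ is minuscule both are vector groups over $\Ok_L$ and $U_G^-\hookrightarrow U^-$ is a closed immersion. By Grothendieck--Messing theory, deformations of $\pdiv$ are classified by direct-summand lifts of $\Fil^1M_0$, on which $U^-$ acts simply transitively, so the universal deformation space is $\Spf R$ with $R\cong\Ok_L[[U^-]]$ a power series ring, and the Hodge filtration of the point attached to $\varpi\colon R\to\Ok_K$ is $\varpi(g_{\mathrm{univ}})\cdot(\Fil^1M\otimes_{\Ok_L}\Ok_K)$ for $g_{\mathrm{univ}}\in U^-(R)$ the tautological element. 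I would then define $R_G$ to be the quotient of $R$ corresponding to the closed formal subscheme $\widehat{U_G^-}\hookrightarrow\widehat{U^-}=\Spf R$; it is a formally smooth quotient, being itself a power series ring over $\Ok_L$ on a free module of rank $\dim U_G^-$.

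Next I would handle the tensors. Since $\D(\pdiv^{\mathrm{univ}})$ is a crystal and the $s_{\alpha,0}$ are Frobenius-invariant, hence horizontal, they extend uniquely to Frobenius-invariant tensors $\tilde s_\alpha$ over $\Spf R$; in a natural frame one has $\tilde s_\alpha=s_{\alpha,0}\otimes 1$, and evaluation on the divided power thickening $S\to\Ok_K$ produces the tensors $\tilde s_\alpha\in\D(\pdiv_\varpi)(S)^\otimes$ attached to a given $\varpi$. A rigidity argument — a successive approximation using that on the crystal over $S$ Frobenius is divisible by $p$ away from $\Fil^0$ — shows that these are the \emph{only} Frobenius-invariant lifts of the $s_{\alpha,0}$ over $S$. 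Granting this, Definition \ref{Gadapted} reduces to the assertion that $\pdiv_\varpi$ is $G_{\Ok_L}$-adapted if and only if the canonical $\tilde s_\alpha$ lie in $\Fil^0\big(\D(\pdiv_\varpi)(\Ok_K)^\otimes\big)$ for all $\alpha$ — the requirement that they cut out a reductive group being automatic, as in the standard frame their stabiliser is $G\otimes_{\Ok_L}S$.

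It then remains to see that this $\Fil^0$-condition is exactly the condition that $\varpi$ factors through $R_G$. Transporting filtrations along the tautological element, $\Fil^0\big(\D(\pdiv_\varpi)(\Ok_K)^\otimes\big)=\varpi(g_{\mathrm{univ}})\cdot\big(\Fil^0(M^\otimes)\otimes_{\Ok_L}\Ok_K\big)$, so the condition reads $\varpi(g_{\mathrm{univ}})^{-1}s_{\alpha,0}\in\Fil^0(M^\otimes)\otimes\Ok_K$ for all $\alpha$. This is settled by an elementary observation valid over any $\Ok_L$-algebra $A$: for $g\in U^-(A)$ the operator $g^{-1}$ acts on $M^\otimes$ as the identity plus an operator that strictly decreases the $\mu$-weight, so $g^{-1}s_{\alpha,0}=s_{\alpha,0}+(\text{terms of negative }\mu\text{-weight})$; since $\Fil^0(M^\otimes)$ contains no nonzero vector of negative weight, $g^{-1}s_{\alpha,0}\in\Fil^0(M^\otimes\otimes_{\Ok_L}A)$ for all $\alpha$ is equivalent to $g^{-1}s_{\alpha,0}=s_{\alpha,0}$ for all $\alpha$, i.e. to $g^{-1}\in G(A)$, i.e. to $g\in(U^-\cap G_{\Ok_L})(A)=U_G^-(A)$. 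Applying this with $A=\Ok_K$ and $g=\varpi(g_{\mathrm{univ}})$ shows that $\pdiv_\varpi$ is $G_{\Ok_L}$-adapted if and only if $\varpi(g_{\mathrm{univ}})\in U_G^-(\Ok_K)$, i.e. if and only if $\varpi$ factors through $R_G$.

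The step I expect to be the main obstacle is the rigidity and compatibility package of the second paragraph: setting up the Dieudonn\'e crystal of the universal object over the relevant divided power thickenings (of $R$, and of $\Ok_K$ via $S$), matching the universal Hodge filtration with the tautological $U^-$-translate, and — the one point where genuine $p$-adic analysis rather than linear algebra over $\Ok_L$ enters — proving uniqueness of Frobenius-invariant lifts of the $s_{\alpha,0}$ over $S$. The remaining ingredients are comparatively formal: the power-series description of $R$ is classical Grothendieck--Messing theory; minusculeness of $\mu$ makes $U^-$ and $U_G^-$ vector groups, so there are no convergence issues in the weight argument of the third paragraph; and formal smoothness of $R_G$ is immediate from its identification with a power series ring over $\Ok_L$.
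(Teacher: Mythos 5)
Your first and third paragraphs are in order: the definition of $R_G$ as the complete local ring of $U^-_G=U^-\cap G_{\Ok_L}$ inside that of $U^-$ is exactly the construction the paper recalls in \S 4.1 (its $U^\circ_G\subset U^\circ$), and the weight computation showing that $g\mu g^{-1}$-filtrations with $g\in U^-(\Ok_K)$ put the tensors in $\Fil^0$ exactly when $g\in U^-_G(\Ok_K)$ is sound (it is in essence the second half of Proposition \ref{unipfilt}). The implication ``$\varpi$ factors through $R_G$ $\Rightarrow$ $\pdiv_\varpi$ is $G_{\Ok_L}$-adapted'' also goes through along your lines, since over $R_G$ the Frobenius of the universal crystal is the twist of $\varphi\otimes\phi$ by the tautological point $u_G\in U^\circ_G(R_G)\subset G(R_G)$, which fixes the $s_{\alpha,0}$.

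The gap is in your second paragraph, and it is exactly the converse that it was supposed to deliver. Over the full deformation ring $R$ the Frobenius on $\D(\pdiv^{\uu})\cong\D(\pdiv)\otimes_{\Ok_L}R$ is $u\circ(\varphi\otimes\phi_R)$ with $u$ the tautological point of $U^-(R)$, so $\Phi(s_{\alpha,0}\otimes 1)=u\cdot s_{\alpha,0}$, which equals $s_{\alpha,0}\otimes 1$ only where $u$ stabilizes the tensors, i.e.\ along $U^-_G$. Hence your claim that the $s_{\alpha,0}$ ``extend uniquely to Frobenius-invariant tensors over $\Spf R$, equal to $s_{\alpha,0}\otimes 1$ in a natural frame'' (and likewise over $S$ for every $\varpi$) is false: the existence of \emph{some} Frobenius-invariant lift of the tensors over $S$ is precisely the nontrivial clause of Definition \ref{Gadapted}, not an automatic feature that lets you replace $G_{\Ok_L}$-adaptedness by a $\Fil^0$ condition on canonical tensors. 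Your rigidity statement only gives uniqueness of a Frobenius-invariant lift \emph{if one exists}; for a $G_{\Ok_L}$-adapted $\varpi$ it does not identify the given $\tilde s_\alpha$ with $s_{\alpha,0}\otimes 1$, because in the trivialization $\D(\pdiv_\varpi)(S)\cong\D(\pdiv)\otimes_{\Ok_L}S$ you are using, the Frobenius is twisted by the image of $u$ and $s_{\alpha,0}\otimes 1$ need not be invariant at all. The hard direction therefore remains: starting from arbitrary Frobenius-invariant $\tilde s_\alpha$ with reductive stabilizer and $\Fil^0$ images, one has to produce a tensor-, Frobenius- and filtration-compatible trivialization (a torsor-triviality and horizontality argument) and only then conclude that the unipotent parameter lies in $U^-_G(\Ok_K)$. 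This is why the paper does not reprove the statement but quotes \cite[Proposition 1.1.10]{Ki2}; note that even in the unramified setting the analogous converse in Proposition \ref{unipfilt} is deduced from \cite[Lemma 1.1.9]{Ki2} rather than from the weight computation alone.
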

	
	We will recall the construction of $R$ and $R_G$ in the next section. For now we give a more explicit description of the $G_{\Ok_L}$-adapted liftings to $\Ok_L$. 
	
	By Grothendieck-Messing theory, lifts of $\pdiv$ to $\Ok_L$ are characterized by lifts of the natural filtration on $\D(\pdiv)\otimes_{\Z_p}({\F}_p)$ to $\D(\pdiv)$.  We fix an isomorphism $V\otimes_{\Z_p}\Ok_L$, and hence an identification $G\otimes_{\Z_p}\Ok_L\cong G_{\Ok_L}.$ We obtain an element $b\in G(L)$ which induces the $F$ crystal structure on $\D(\pdiv).$  By \cite[Lemma 1.1.12]{Ki1}, the filtration on $\D(\pdiv)(\Fpbar)$ is indued by a $G$-valued cocharacter $\mu_0$. Let $F$ the filtration on $\D(\pdiv)$ induced by $\mu_0$ and let $U_{G}^\circ$ be the opposite unipotent defined by $\mu_0$.
	
	\begin{prop}\label{unipfilt}
		Let $\tilde{\pdiv}$ be a lifting of $\pdiv_x$ to $\Ok_L$, with corresponding filtration $F' \subset \D(\pdiv)$. Then $\lpdiv$ is $G_{\Ok_L}$-adapted if and only if $F'$ is induced by a $G_{\Ok_L}$-valued cocharacter $\mu$ which is $G_{\Ok_L}$-conjugate to $\mu_0$. Furthermore, $F' \subset \D(\pdiv)$ corresponds to a $G_{\Ok_L}$-adapted lift of $\pdiv$ if and only if $F' = uF$, with $u \in U^{\circ}_G(\Ok_L)$.
	\end{prop}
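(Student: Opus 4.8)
The plan is to compare lifts of $\pdiv$ to $\Ok_L$ with lifts of the Hodge filtration via Grothendieck--Messing theory, and then to translate the $G_{\Ok_L}$-adaptedness condition of Definition \ref{Gadapted} into a condition purely on the filtration. First I would recall that deformations of $\pdiv$ to $\Ok_L$ are, by Grothendieck--Messing, in bijection with filtrations $F' \subset \D(\pdiv)$ which are $\Ok_L$-direct summands lifting the Hodge filtration on $\D(\pdiv) \otimes_{\Ok_L} \Fpbar$; and that the tensors $\sa$ automatically spread out to a Frobenius-invariant collection $\s_\alpha$ over the ring $S$ by the crystalline nature of $\D(\pdiv)$ (using that $\pdiv$ is defined over $\Fpbar$, so $\D(\pdiv)$ is already a crystal and $\s_\alpha$ is just its base change to $S$; this already defines a reductive group $G_S$ isomorphic to $G_{\Ok_L} \otimes_{\Ok_L} S$). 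Hence the only part of Definition \ref{Gadapted} that can fail is the condition that the images of $\s_\alpha$ in $\D(\tilde{\pdiv})(\Ok_K)^\otimes$ lie in $\Fil^0$ — and since the $\s_\alpha$ already lie in $\Fil^0$ of $\D(\pdiv)(S)^\otimes$ with respect to the cocharacter filtration $\mu_0$, this becomes precisely the requirement that the filtration $F'$ on $\D(\pdiv)$ be $G_{\Ok_L}$-split, i.e.\ induced by a $G_{\Ok_L}$-valued cocharacter; and any two cocharacters of $G_{\Ok_L}$ inducing filtrations of the same type on $V \otimes \Ok_L$ are $G_{\Ok_L}(\Ok_L)$-conjugate (all Borels are conjugate, parabolics of a given type form a single orbit), which over $\Ok_L$ lifts to a $G_{\Ok_L}$-conjugacy since $\Ok_L$ is a complete local ring and the relevant moduli space (of parabolics, or of cocharacters splitting a given parabolic) is smooth. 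This gives the first assertion.

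For the second assertion, I would argue as follows. The filtration $F = F_{\mu_0}$ and its conjugates under $G_{\Ok_L}(\Ok_L)$ are exactly the $G_{\Ok_L}$-split filtrations of the correct type; write $P_G = P(\mu_0)$ for the associated parabolic, with unipotent radical $U_G$, and let $U_G^\circ$ be the opposite unipotent (the unipotent radical of $P(\mu_0^{-1})$). The $G_{\Ok_L}(\Ok_L)$-orbit of $F$ is identified with $(G/P_G)(\Ok_L)$, and the big cell $U_G^\circ \cdot P_G / P_G \subset G/P_G$ gives an open immersion $U_G^\circ \hookrightarrow G/P_G$. The key point is that a filtration $F' = gF$ with $g \in G_{\Ok_L}(\Ok_L)$ arises from a \emph{lift} of the Hodge filtration — i.e.\ $F'$ reduces mod the maximal ideal to the \emph{same} filtration as $F$ — if and only if $gP_G/P_G$ lies in the formal neighbourhood of the identity coset in $(G/P_G)(\Ok_L)$, which is precisely $U_G^\circ(\Ok_L)$ sitting inside the big cell (here we use that $F$ and $F'$ agree modulo $p$, so their difference is "infinitesimal," placing $g$ in the big cell, and that the big cell coordinates of an $\Ok_L$-point reducing to the identity lie in $U_G^\circ(\Ok_L)$). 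Conversely every $u \in U_G^\circ(\Ok_L)$ reduces to the identity modulo $p$, hence $uF$ reduces to $F \bmod p$ and is therefore a genuine lift. Combining, the $G_{\Ok_L}$-adapted lifts of $\pdiv$ are parametrized exactly by $\{uF : u \in U_G^\circ(\Ok_L)\}$, as claimed.

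The main obstacle I anticipate is the careful bookkeeping in the equivalence "$\s_\alpha \in \Fil^0(\D(\tilde\pdiv)(\Ok_K)^\otimes)$ $\iff$ $F'$ is $G_{\Ok_L}$-split": one must check that the Hodge filtration on $\D(\tilde\pdiv)(\Ok_K)$ matches the image of $F'$ under Grothendieck--Messing base change $S \to \Ok_K$, and that stability of a full tensor package $\{\s_\alpha\}$ under $\Fil^0$ is genuinely equivalent to the splitting being $G_{\Ok_L}$-rational rather than merely $\GL$-rational. This is where the hypothesis that $\{\sa\}$ cut out a reductive $G$ (so that "the $\s_\alpha$ lie in $\Fil^0$" forces a cocharacter of $G_{\Ok_L}$, by \cite[Lemma 1.1.12]{Ki1} applied over $\Ok_K$ and then descended, together with smoothness of the relevant Hilbert-scheme-of-cocharacters) does the work. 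A secondary subtlety is the identification of "reduces to $F$ mod $p$" with "lies in the big cell and has $U_G^\circ$-coordinate in $U_G^\circ(\Ok_L)$"; this is standard once one notes $\Ok_L$ is $p$-adically complete and $(G/P_G)(\Ok_L) \to (G/P_G)(\Fpbar)$ has fibres (over any point) identified, after translation, with the formal group of the big cell, namely $U_G^\circ$.
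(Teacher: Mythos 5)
Your overall route is the same as the paper's. For the first assertion, both you and the paper use the canonical identification $\D(\tilde{\pdiv})(S)\cong \D(\pdiv)\otimes_{\Ok_L}S$ compatible with Frobenius, take the base-changed tensors to get the ``if'' direction, and invoke Kisin's lemma (\cite{Ki2}, Lemma 1.1.9, which already contains the conjugacy to $\mu_0$) for the ``only if'' direction; your separate argument for conjugacy via smoothness of the transporter over the complete local ring $\Ok_L$ is an acceptable substitute for what that lemma packages. For the second assertion your big-cell description of the fibre of $(G/P)(\Ok_L)\to (G/P)(\Fpbar)$ over the identity coset is the same computation as the paper's, which writes $F'=gF$, observes that $g$ reduces into $P(\Fpbar)$ because $F'$ and $F$ lift the same Hodge filtration, and applies the Iwahori decomposition of $\mathcal{P}=\{g\in G(\Ok_L): g\bmod p\in P(\Fpbar)\}$ to write $g=u q$ with $q\in P(\Ok_L)$ and $u$ in the congruence subgroup of $U^\circ_G(\Ok_L)$.

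There is, however, one step in your write-up that is simply false: ``every $u\in U^\circ_G(\Ok_L)$ reduces to the identity modulo $p$.'' The group $U^\circ_G(\Ok_L)$ of all integral points of the opposite unipotent does not reduce to the identity; only its congruence subgroup (the $\Ok_L$-points of the formal completion at the identity, which the paper denotes $\mathcal{U}^\circ_G(\Ok_L)=pU^\circ_G(\Ok_L)$) does. Accordingly, the fibre over the identity coset that you describe is this congruence subgroup, not all of $U^\circ_G(\Ok_L)$, and the ``if'' direction of the second assertion must be handled as the paper (implicitly) does: one only considers filtrations $F'$ that already lift the Hodge filtration (equivalently, $u\equiv 1\bmod p$), and then part one gives adaptedness since $F'=uF$ is induced by $u\mu_0u^{-1}$. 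This is not a cosmetic point, since the later use of the proposition (Proposition \ref{rest} and the deformation-theoretic arguments of \S 4) parametrizes points by elements of $U^\circ_{(a,b)}(\Ok_L)$ \emph{reducing to the identity}. Finally, you do not prove uniqueness of $u$; the statement does not claim it, but the paper records it (via $\mathcal{U}^\circ_G(\Ok_L)\cap P(\Ok_L)=1$) and uses it later, e.g.\ in the proof of Theorem \ref{canliftcrystal}.
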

	
	\begin{proof}
		We start with the first claim. Let $\tilde{\pdiv}_S$ be the base change of $\tilde{\pdiv}$ along $\Ok_L\rightarrow S$. Then $\tilde{\pdiv}_S$ is a lift of $\tilde{\pdiv}$, and we have a isomorphism $\D(\tilde{\pdiv})(S)\cong\D(\pdiv)\otimes_{\Ok_L}S$ which is compatible with $\varphi$. Under this isomorphism, $s_{\alpha,0}$ correspond to tensors $\tilde{s}_{\alpha}\in \D(\tilde{\pdiv})(S)^\otimes$, whose stabilizer can be identified with the base change $G_S$ of $G$ to $S$. Now if the filtration is induced by a $G(\Ok_L)$-valued cocharacter, we have the $\tilde{s}_{\alpha}$ specialize to $s_{\alpha,0}\in\mathrm{Fil}^0\D(\pdiv)^\otimes$, hence $\tilde{\pdiv}$ is $G_{\Ok_L}$-adapted.
		
		Conversely suppose $\tilde{\pdiv}$ is $G_{\Ok_L}$-adapted.   By \cite[Lemma 1.1.9]{Ki2}, the filtration on $\D(\pdiv)$ corresponding to $\tilde{\pdiv}$  is given by a $G_{\Ok_L}$-valued cocharacter $\mu$ which is $G_{\Ok_L}$-conjugate to $\mu_0^{-1}$.
		
		We now prove the statement about the filtrations. If $F' = uF$ with $u$ as above, then $F'$ is induced the by the conjugate of $\mu_0$ by $u$. It remains to prove the other implication. Let $M$ denote the Levi-subgroup of $G$ defined by $\mu_0$, let $P$ denote the parabolic subgroup of $G$ defined by $F$, and let $U_G$ be the unipotent radical of $P$. Define $\mathcal{P}$ to be the parahoric subgroup of $G(L)$ consisting of $g \in G(\Ok_L)$ which reduce to $P(\overline{\F}_p)$ modulo $p$. By the Iwahori decomposition, we have $\mathcal{P}(\Ok_L)=\mathcal{U}_G^\circ(\Ok_L)M(\Ok_L)\mathcal{U}_G(\Ok_L)$, where $\mathcal{U}_G(\Ok_L)=U(L)\cap \mathcal{P}(\Ok_L)$ and $\mathcal{U}_G^\circ(\Ok_L)=U_G^\circ(L)\cap\mathcal{P}(\Ok_L)$. Thus since $\mathcal{U}_G^\circ(\Ok_L)=pU_G^\circ(\Ok_L)$ we have $g=up$ for $u\in pU_G^\circ(\Ok_L)$ and $p\in P(\Ok_L)$. Since $p$ fixes $F$, we have $gF=uF$. 
		The uniqueness of $u$ follows from the fact that $\mathcal{U}_G^\circ(\Ok_L)\cap P(\Ok_L)=1$.
		
	\end{proof}
	
	\subsection{} Denote by $J_b$ the algebraic group over $\mathbb{Q}_p$ whose points in a $\mathbb{Q}_p$-algebra $R$ are given by 
	$$J_b(R) = \{g \in G(R \otimes_{\mathbb{Q}_p} L) : b\sigma(g) = gb\}$$
	The $\Q_p$ points of $J_b$ can be identified with the subset of automorphisms of $\pdiv_x$ in the isogeny category, which fix the tensors $s_{\alpha,0}$. By work of Kottwitz \cite{Ko1}, $J_b$ is an inner form of $M$, the centralizer of $\overline{\nu}_b$ in $G$.
	
	We can now prove the main result of this section:
	
	\begin{thm}\label{canliftcrystal}
		Let $\pdiv$ be a $\mu$-ordinary $p$-divisible group with $G$-structure. Then there exists a unique $G_{\Ok_L}$-adapted lift 
		$\lpdiv$ to $\Ok_L$, such that the action of $J_b(\Q_p)$ on $\pdiv$ lifts (in the isogeny category) to $\lpdiv$.
	\end{thm}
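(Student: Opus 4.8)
The plan is to construct the lift explicitly from the crystal side and then transport it via Grothendieck–Messing theory. By Corollary \ref{cortwo} and Proposition \ref{cats}, after choosing a suitable isomorphism $V\otimes_{\Z_p}\Ok_L\cong \D(\pdiv)$ we may assume the $F$-crystal with $G$-structure is given by $b=\upsilon(p)$, where $\upsilon=\sigma(\mu)$. In this presentation the Newton cocharacter $\nu=\overline{\nu}_b=\overline{\mu}$ lies in $X_*(T)$ and is $\sigma$-fixed, and I would like the desired lift $\lpdiv$ to be the one whose Hodge filtration on $\D(\pdiv)$ is induced by the cocharacter $\mu_0$ (the Hodge cocharacter) chosen so that it commutes with $\nu$ — concretely, one shows the conjugacy class of $\mu_0$ has a representative landing in $M_b$, the centralizer of $\nu$. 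The filtration $F$ defined by this $\mu_0$ is then a $G_{\Ok_L}$-valued filtration, so by Proposition \ref{unipfilt} the associated lift $\lpdiv$ is $G_{\Ok_L}$-adapted (it corresponds to $u=1\in U_G^\circ(\Ok_L)$, the identity section).

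The next step is to verify that $J_b(\Q_p)$ acts on $\lpdiv$ in the isogeny category. Recall $J_b(\Q_p)=\{g\in G(L): g^{-1}b\sigma(g)=b\}=\{g\in G(L): gb\sigma(g)^{-1}=b\}$; with $b=\upsilon(p)$ and since $\upsilon$ is central in $M_b$ (Lemma \ref{central} applied inside $M_b$, whose derived group sees $\overline{\upsilon}=\overline{\mu}$ as central), the defining equation becomes $g\sigma(g)^{-1}\in $ (things commuting with $\upsilon(p)$), and in any case $J_b\subset M_b$ since $J_b$ centralizes $\nu_b$. An element $g\in J_b(\Q_p)$ gives a quasi-isogeny of $\pdiv$; by Grothendieck–Messing theory, to check it lifts to $\lpdiv$ it suffices to check that the induced automorphism of $\D(\pdiv)[1/p]$ preserves the filtration $F$ up to the appropriate divided-power thickening — but since $g\in M_b(L)$ and $F$ is defined by $\mu_0\in M_b$, and $M_b\supset$ (image of $J_b$), the element $g$ normalizes, in fact preserves, the parabolic and Levi data defining $F$; more precisely $g$ commutes with $\nu$ and $\mu_0$ can be chosen in $M_b$ to commute with enough of $J_b$, so $g\cdot F=F$. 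This is the step that needs the most care: one must check the filtration $F$ is actually stable under all of $J_b(\Q_p)$, not merely a torus, which is why it is essential that $\mu_0$ be taken inside $M_b=Z_G(\nu)$ and that $J_b$ is an inner form of $M_b$ acting through its adjoint action compatibly with this. I expect this compatibility — that the Hodge cocharacter can be chosen in the Newton centralizer in a way making $F$ genuinely $J_b$-stable — to be the crux of the argument.

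Finally, for uniqueness: suppose $\lpdiv'$ is another $G_{\Ok_L}$-adapted lift to $\Ok_L$ carrying a lift of the $J_b(\Q_p)$-action. By Proposition \ref{unipfilt}, its filtration is of the form $F'=uF$ for a unique $u\in U_G^\circ(\Ok_L)$. The $J_b(\Q_p)$-action lifting to $\lpdiv'$ forces $F'$ to be stable under $J_b(\Q_p)$ (acting via the crystal), hence $uF$ is $J_b$-stable; since $F$ is $J_b$-stable and $J_b$ is an inner form of $M_b$ which acts on $\D(\pdiv)$ with $\nu$-weight decomposition, one computes that the stabilizer in $U_G^\circ$ of the property ``$uF$ is $J_b$-stable'' forces $u=1$. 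Concretely, writing the weight decomposition $\D(\pdiv)\otimes_{\Z_p}\Ok_L=\bigoplus_i L_i$ from the slope decomposition, both $F$ and the requirement of $J_b$-stability pin down the graded pieces, and $U_G^\circ(\Ok_L)$ acts faithfully and without fixed points on the set of such filtrations other than $F$ itself because $J_b(\Q_p)$ (more precisely a maximal split torus of $J_b$, or the center of $M_b$) acts with distinct weights on the root spaces in $U_G^\circ$ that are not in $M_b$; hence $u=1$ and $\lpdiv'=\lpdiv$. I would organize the write-up so that the existence construction (choice of $\mu_0\in M_b$, appeal to Proposition \ref{unipfilt}), the $J_b$-equivariance (Grothendieck–Messing plus the observation $J_b\subset M_b$), and the uniqueness (fixed-point computation in $U_G^\circ$) appear as three clearly separated steps.
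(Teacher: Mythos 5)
Your overall route is the paper's route: normalize $b=\sigma(\mu(p))$ via Proposition \ref{cats}, take the lift whose filtration is cut out by a cocharacter living in the Newton centralizer, get $G_{\Ok_L}$-adaptedness from Proposition \ref{unipfilt}, lift the action by Grothendieck--Messing, and kill the unipotent parameter by a weight argument against a central element of $J_b$. But at the step you yourself flag as the crux you stop short, and what you do assert is strictly weaker than what is needed. For $J_b(\Q_p)$-stability of $F$ it does not suffice that the Hodge cocharacter be chosen \emph{in} $M_b$ (i.e.\ commuting with $\nu$): a general element of $M_b(L)$ does not preserve a filtration defined by a non-central cocharacter of $M_b$. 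The decisive fact, which the paper extracts from Lemma \ref{central} using exactly the $\mu$-ordinariness hypothesis $\overline{\nu}_b=\overline{\mu}$ (so that $M_b$ is the centralizer of $\overline{\mu}$), is that $\mu$ itself is \emph{central} in $M_b$; then every $h\in J_b(\Q_p)\subset M_b(L)$ commutes with $\mu$ on the nose, hence $hF=F$, and Grothendieck--Messing applies. You do mention in passing that $\upsilon$ is central in $M_b$, but you use it only to rewrite the defining equation of $J_b$ and then retreat to ``commute with enough of $J_b$'' and ``I expect this compatibility to be the crux''; the equivariance argument is therefore not closed as written, even though the missing sentence is short.

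The uniqueness sketch has the same missing ingredient. Your fixed-point computation needs a concrete element of $J_b(\Q_p)$ acting with \emph{nonzero} (not necessarily distinct) weights on \emph{every} root space of $U_G^\circ$; the paper takes $\gamma_p=n\nu_b(p)=b\sigma(b)\cdots\sigma^{n-1}(b)$, which lies in $J_b(\Q_p)$ because $\nu_b$ is $\Q_p$-rational and central in $M_b$. Your phrase ``root spaces in $U_G^\circ$ that are not in $M_b$'' leaves open the possibility of root spaces of $U_G^\circ$ with $\nu$-weight zero, on which such an element acts trivially and the argument says nothing, so $u=1$ would not follow. Again Lemma \ref{central} closes the gap: centrality of $\mu$ in $M_b$ forces $U_G^\circ\subset U^\circ_{G,\nu}$, so no root of $U_G^\circ$ is a root of $M_b$, conjugation by $\gamma_p$ scales all coordinates of $u\in U_G^\circ(\Ok_L)$ by nontrivial powers of $p$, and the identity $\gamma_p u\gamma_p^{-1}=u$ (obtained, as in the paper, from the fact that $\gamma_p$ preserves $F'\otimes_{\Ok_L}L$ and commutes with $\mu$, so $\gamma_pu\mu u^{-1}\gamma_p^{-1}$ and $u\mu u^{-1}$ define the same parabolic) forces $u=1$. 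With these two uses of Lemma \ref{central} made explicit, your three-step outline coincides with the paper's proof; note also that the paper gets a slightly stronger uniqueness for free, namely uniqueness among lifts to which only the Frobenius element $\gamma_p$ extends.
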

	\begin{proof}
		By Proposition \ref{cats}, $b$ is $\sigma$-conjugate to $\sigma(\mu(p))$ by an element of $G(\Ok_L)$. Upon changing the isomorphism $$V_{\Z_p} \otimes_{\Z_p}\Ok_L\cong\D(\pdiv)$$ we may assume $b=\sigma(\mu(p))$. The filtration on $\D(\pdiv)(\Fpbar)$ is induced by the reduction of $\mu$ mod $p$. Consider the filtration induced by $\mu$ on $\D(\pdiv)$, and let $\lpdiv$ denote the corresponding lift. By Proposition \ref{unipfilt}, $\lpdiv$ is $G_{\Ok_L}$-adapted. 
		
		Recall that $M$ is the centraliser of $\overline{\mu}=\overline{\nu}_b=\nu_b$ in $G$. It is a Levi subgroup of $G$ defined over $\Z_p$. Let $h\in J_b(\Q_p)$, since $h^{-1}b\sigma(h)=b$, we have $h$ commutes with $\nu_b=\overline{\nu}_b$ and hence with $\overline{\mu}$. Thus $h\in M(L)$. By Lemma \ref{central}, $\mu$ is central in $M$, hence commutes with $h$. Thus the action of $J_b(\Q_p)$ respects filtrations and hence lifts to an action on $\tilde{\pdiv}$ (in the isogeny category) by Grothendieck-Messing theory.

		Finally we show uniqueness of $\tilde{\pdiv}$. Let $n$ be an integer such $n\nu_b$ is a genuine cocharacter of $G$ and let $\gamma_p=n\nu_b(p)$. Then $\gamma_p=b\sigma(b)...\sigma^{n-1}(b)\in J_b(\Q_p)$. We prove the stronger statement that $\tilde{\pdiv}$ is the unique lift for which the action of $\gamma_p$ extends.
		
		Let $\lpdiv'$ correspond to a different $G_{\Ok_L}$-adapted lifting of $\pdiv$, corresponding to a filtration $F'$ for which the action of $\gamma_p$ extends. By Proposition \ref{unipfilt}, the filtration is induced by the cocharacter $u\mu^{-1}u^{-1}$ for some $u\in U_G^{\circ}(\Ok_L)$ which reduces to the identity$\mod p$. 
		
		By Grothendieck Messing theory $\gamma_p$ preserves the filtration $F'\otimes_{\Ok_L}L\subset \D(\pdiv)\otimes_{\Ok_L}L$. Therefore $\gamma_pu\mu u^{-1}\gamma_p^{-1}$ defines the same parabolic in $G\otimes_{\Z_p}L$ as $u\mu u^{-1}$. Let $u'=\gamma_pu\gamma_p^{-1}$. Then since $\gamma_p$ commutes with $\mu$, we have $u'\mu u'^{-1}$ and $u\mu u^{-1}$ defines the same parabolic in $G\otimes_{\Z_p}L$, hence $u=u'$.
		
		By Lemma \ref{central}, $U_G^{\circ}$ is contained in the opposite unipotent $U^\circ_{G,\nu}$ defined by $\nu$. Therefore $\gamma_pu\gamma_p^{-1}\neq u$ unless $u=1$, in which case $F=F'$.
	\end{proof}
	
	\begin{definition}
		Let $\pdiv$ be a $\mu$-ordinary $p$-divisible group with extra structure. The lift constructed in Theorem \ref{canliftcrystal} is called the canonical lift of $\pdiv$ and will be denoted $\pdiv^{\can}$.
	\end{definition}
	
	Let us briefly explain the reason for the name canonical. In classical Serre--Tate theory for ordinary $p$-divisible groups, and in the PEL case considered by Moonen in \cite{Mo}, the canonical lift is constructed by taking the slope decomposition of the corresponding $p$-divisible group and showing there is a unique deformation of each constant slope part. The canonical lift corresponds to the $p$-divisible group which is the product of these deformations. In that case, if $M_{\Ok_L}$ denotes the Levi subgroup which preserves the slope decomposition, then the above can be stated group theoretically as saying that the canonical lift corresponds to the unique $M_{\Ok_L}$-adapted lifting, which is precisely how the canonical lift is contructed in Theorem \ref{canliftcrystal}. Moreover the canonical lift of an ordinary abelian variety is characterised as the unique lift for which all $\mod$ p automorphisms extend.
	
	\subsection{}We write $F^{\can}\subset \D(\pdiv)$ for the filtratioin corresponding to the canonical lift. Since $\mu$ and $\nu$ both factor through the same maximal torus,  we have that $F^{\can}$ breaks up into a direct sum corresponding to the slope decomposition. In other words, $F^{\can} = \displaystyle{\bigoplus_{i = 1}^r F^{\can}_i}$, with $F^{\can}_i \subset \D(\pdiv_{x,i})$. In particular the slope decomposition of the corresponding $F$-crystal with $G$ structure gives a slope decomposition $\pdiv=\prod_{i=1^r}\pdiv_{i}$, and this extends to a decomposition $\pdiv^{\can}=\prod_{i=1}^r\pdiv^{\can}_i$ of the underlying $p$-divisible group of $\pdiv^{\can}$.

	\section{Deformation theory}
	
	In this section we study the $G$-deformation space of a $\mu$-ordinary $p$-divisible group $\pdiv$ with $G$-structure, following \cite{Ki1}. By definition this is the subspace of the universal deformation space whose $\Ok_K$ points correspond to $G_{\Ok_L}$-adapted liftings as in the previous section. We show that the universal $p$-divisible group over the $G$-deformation space admits a slope filtration and the $G$-deformation space naturally has the structure of a  shifted subspace of a cascade.
	
	\subsection{}We retain the notation of the previous section so that $\pdiv$ is a $p$-divisible with $G$ structure over $\Fpbar$. We let $\mu$ be a cocharacter of $G_{\Ok_L}$ inducing the filtration on $\D(\pdiv)\otimes_{\Ok_L}\Fpbar$. Define $U^\circ\subset \GL(\D(\pdiv))$ to be the opposite unipotent of $\mu$ in $\GL(\D(\pdiv))$, and $U^\circ_G$ to be the opposite unipotent of $\mu$ in $G_{\Ok_L}$. 
	
	Let $R$ be the complete local ring at the identity section of $U^\circ$ and $R_G$ be the complete local ring at the identity section of $U^\circ_G$. The ring $R_G$ is a formally smooth quotient of $R$. We choose coordinates such that $R \tilde{\rightarrow} \Ok_L[|t_1, ... , t_N|]$ and $R_G \tilde{\rightarrow} R/(t_{n+1}, ...,t_N)$. 
	
	Define a Frobenius operator $\phi = \phi_R$ on $R$ (and $\phi = \phi_{R_G}$ on $R_G$) extending the one on $\Ok_L$, and sending the $t_i$ to $t_i^p$. Let $M = \D(\pdiv) \otimes R$, along with the filtration induced by $\mu$. Define a $\sigma$-semilinear Frobenius $\phi$ on $M$ given by the composite 
	$$\D(\pdiv)\otimes_{\Ok_L} R \xrightarrow{\phi \otimes \phi} (\D(\pdiv)\otimes_{\Ok_L} R) \xrightarrow{u} (\D(\pdiv)\otimes_{\Ok_L} R)$$
	where $u$ is the tautological $R$ point of $U^\circ$, given by the canonical inclusion of the coordinate ring of $U^\circ$ into $R$. This gives $M$ the structure of a filtered $F$-crystal over $R$ whose associated $p$-divisible group such is a versal deformation of $\pdiv$. The F-crystal structure on $\D(\pdiv) \otimes R$ is given by a connection $\nabla$. 
	
	This $p$-divisible group (and the crystal) pulls back to $R_G$ along the inclusion. The main result of \cite[\S1]{Ki1} is that an $\Ok_K$ point of $R$ factors through $R_G$ exactly when it is a $G_{\Ok_L}$-adapted lifting of $\pdiv$. The $F$-crystal on $R_G$ is given by $M_G = \D(\pdiv) \otimes_{\Ok_L} R_G$, along with the filtration induced by $\mu$, and Frobenius $\phi_G$ given by the composite 
	$$M_G = \D(\pdiv)\otimes_{\Ok_L} R_G \xrightarrow{\phi \otimes \phi} M_G \xrightarrow{u_G} M_G $$
	where $u_G$ is the tautological $R_G$ point of $U^\circ_G$.
	
	\subsection{}
	Now assume $\pdiv$ is a $\mu$-ordinary $p$-divisible group over $\Fpbar$. We fix the isomorphism $V\otimes_{\Z_p}\Ok_L\cong \D(\pdiv)$ such that $\varphi=b\sigma$ where $b=\sigma(\mu(p))$.
	
	Let $\nu_b=\overline{\nu}_b=\overline{\mu}$ be the Newton cocharacter of $\sigma(\mu(p))$ and let $U^{\circ}_{G,\nu}$ denote the opposite unipotent with respect to $\nu_b$. Recall we have the slope decomposition $\pdiv = \displaystyle \prod_{i=1}^{r} \pdiv_i$ and its extension to the canonical lift $\pdiv^{\can} = \displaystyle \prod_{i=1}^{r} \pdiv^{\can}_i$. Suppose that each $\pdiv_i$ is isoclinic of slope $\lambda_i$, with $\lambda_i > \lambda_j$ if $i<j$. For $1\le a \le b \le r$, let $\pdivab = \displaystyle \prod_{i=a}^b \pdiv_i$, and let $\pdivab^{\can}$ denote $\displaystyle \prod_{i=a}^{b} \pdiv^{\can}_i$. We write $\D_{(a,b)}$ for the corresponding summand of $\D(\pdiv)$, and $F^{\can}_{(a,b)}\subset \D_{(a,b)}$ for the filtration  corresponding to $\pdiv_{(a,b)}^{\can}$.
	
	The slope decomposition induces a slope filtration on $\pdiv$, given by \begin{equation}0\subset\pdiv_{(1,1)}\subset \pdiv_{(1,2)}\subset\hdots \subset \pdiv_{(1,r-1)}\subset \pdiv_{(1,r)}=\pdiv\end{equation}
	
	and the corresponding the filtration on $\D(\pdiv)$ is given by 
	
	\begin{equation}\label{filt}0\subset \D_{(r,r)}\subset \D_{(r-1,r)}\subset \hdots \subset \D_{(2,r)}\subset \D_{(1,r)}=\D(\pdiv)\end{equation}
	
	(Note the indexing is flipped since we are using the contravarient Dieudonn\'e module). By definition, elements in $U^\circ_{G,\nu}$ (and in the unipotent of $\nu$ in $\GL(\D(\pdiv))$) preserve the filtration \eqref{filt}, and act as the identity on the quotients $\D_{(a,r)}/\D_{(a+1,r)}.$

	\subsection{}We prove in the following proposition that the slope filtration lifts to $G_{\Ok_L}$-adpated deformations, with the associated graded pieces isomorphic to $\pdiv^{\can}_i$. More precisely:
	
	\begin{prop}\label{slopefilt}
		Let $\tilde{\pdiv}$ be a $G_{\Ok_L}$-adapted lifting to $\Ok_K$. Then, there exists a canonical filtration $0 \subset \tilde{\pdiv}_{(1,1)} \subset \tilde{\pdiv}_{(1,2)} ... \subset \tilde{\pdiv}_{(1,r)} = \tilde{\pdiv}$, such that $\tilde{\pdiv}_{(1,a)}$ is a deformation of $\displaystyle{\prod_{i=1}^{a} \pdiv_i \subset \pdiv}$. Further,  $\tilde{\pdiv}_{(1,a)}/ \tilde{\pdiv}_{(1,a-1)} = \pdiv^{\can}_a\times_{\Ok_L}\Ok_K$.
	\end{prop}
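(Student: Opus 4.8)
The strategy is to reduce the existence of the slope filtration on a $G_{\Ok_L}$-adapted lift to a statement about the filtered $F$-crystal $(M, \mathrm{Fil}^\bullet, \phi, \nabla)$ over the base $\Ok_K$ that the lift induces, and then to lift the filtration \eqref{filt} on $\D(\pdiv)$ through the crystal. First I would observe that, by Proposition \ref{unipfilt}, a $G_{\Ok_L}$-adapted lift $\tilde{\pdiv}$ to $\Ok_K$ is classified by a filtration of the form $u\cdot F$ with $u \in U^\circ_G(\Ok_K)$; since by Lemma \ref{central} we have $U^\circ_G \subseteq U^\circ_{G,\nu}$, the element $u$ preserves the slope filtration \eqref{filt} on $\D(\pdiv)$ and acts as the identity on the graded pieces $\D_{(a,r)}/\D_{(a-1,r)}$. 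Hence the sub-objects $\D_{(a,r)} \otimes_{\Ok_L} \Ok_K$ are stable under the lifted Hodge filtration on $\D(\tilde{\pdiv})(\Ok_K)$, and I would use this together with Grothendieck--Messing theory to produce, over $\Ok_K$, a chain of $p$-divisible subgroups lifting $\prod_{i=a}^r \pdiv_i$.

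To make this precise I would pass to the crystal over $R_G$ (or rather its pullback along $R_G \to \Ok_K$ classifying $\tilde{\pdiv}$) and check that the sub-crystal $\D_{(a,r)} \otimes_{\Ok_L} R_G$ is stable under both the Frobenius $\phi_G = u_G \circ (\phi \otimes \phi)$ and the connection $\nabla$. Frobenius-stability is immediate because $b = \sigma(\mu(p))$ preserves the slope decomposition and $u_G \in U^\circ_G \subseteq U^\circ_{G,\nu}$ preserves \eqref{filt}; stability under $\nabla$ should follow from the explicit description of $\nabla$ in \cite{Ki1} (it is built from the same unipotent datum), or alternatively can be checked after base change to $\Ok_K$-points using smoothness, which is input (2) in the introduction. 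Granting this, the quotient crystals $\D_{(a,r)}\otimes R_G \big/ \D_{(a-1,r)}\otimes R_G$ carry the induced Frobenius, which is again of the form (unipotent)$\circ(\phi\otimes\phi)$, but now the relevant opposite unipotent acts trivially on this graded piece — so the induced deformation of $\pdiv_a$ over $R_G$ is constant, i.e. equals $\pdiv_a \times_{\Ok_L} R_G$ with its filtration, and restricting along $R_G \to \Ok_K$ identifies $\tilde{\pdiv}_{(1,a)}/\tilde{\pdiv}_{(1,a-1)}$ with a lift of $\pdiv_a$ to $\Ok_K$ which is $M_{\Ok_L}$-adapted; by the characterization of the canonical lift in Theorem \ref{canliftcrystal} this lift is $\pdiv^{\can}_a \times_{\Ok_L}\Ok_K$.

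Canonicity of the filtration $\tilde{\pdiv}_{(1,a)}$ follows from canonicity of the slope filtration \eqref{filt} on $\D(\pdiv)$ together with the fact that the lifted Hodge filtration determines the lift uniquely (Grothendieck--Messing); any two choices of lift of the slope filtration would have to agree on the special fibre and be compatible with the unique lifted Hodge filtration, hence coincide. Finally, to descend from $\D_{(a,r)}\otimes\Ok_K$ being a sub-object of the Dieudonné module to an actual $p$-divisible subgroup $\tilde{\pdiv}_{(1,a)} \subseteq \tilde{\pdiv}$, I would invoke the equivalence between $p$-divisible groups over $\Ok_K$ and their Breuil--Kisin (or Grothendieck--Messing) modules, noting that $\D_{(a,r)}\otimes\Ok_K$ together with its lifted filtration satisfies the admissibility conditions because it is a direct summand, compatible with filtrations, of the admissible object attached to $\tilde{\pdiv}$.

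The main obstacle I anticipate is verifying stability of the sub-crystal $\D_{(a,r)} \otimes R_G$ under the connection $\nabla$: the connection is not simply the one coming from the slope decomposition over $\Fpbar$ but is twisted by the universal unipotent, and one must check that the off-diagonal (slope-changing) components of $\nabla$ vanish on $\D_{(a,r)}$. This should come down to the combination of $U^\circ_G \subseteq U^\circ_{G,\nu}$ and the compatibility of $\nabla$ with the grading, but it is the step that genuinely uses the $\mu$-ordinary hypothesis and the explicit construction of the crystal in \cite{Ki1}, so I would spend the most care there; if a direct argument is awkward, the fallback is to check the assertion on $\Ok_L$-points (where everything is pinned down by Grothendieck--Messing and the cocharacter $\mu$) and use formal smoothness of $R_G$ to conclude.
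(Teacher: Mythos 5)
Your proposal is correct and takes essentially the same route as the paper: reduce to the versal crystal $M_G$ over $R_G$, show $M_{(a,r)}=\D_{(a,r)}\otimes R_G$ is stable under $\phi_G$ because $U^\circ_G\subset U^\circ_{G,\nu}$ (Lemma \ref{central}) and under $\nabla_G$ because the connection is valued in the Lie algebra of the unipotent (the paper cites \cite[E.1]{Ki2}: $\nabla_G\in \mathrm{Lie}\,U_{G,\nu}\otimes\Omega^1_{R_G}$), and conclude that the graded pieces are constant filtered $F$-crystals, hence base changes of $\pdiv^{\can}_a$. The only caveat is your fallback for the connection step: stability of a submodule under $\nabla$ cannot be checked on $\Ok_L$-points by formal smoothness, but this is moot since your primary argument (Kisin's explicit description of $\nabla_G$) is exactly what the paper uses.
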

	\begin{proof}
		It suffices to prove that the slope filtration exists for the versal $G_{\Ok_L}$-adapted $p$-divisible group over $R_G$, and exhibits the required properties. In order to show the existence of the slope filtration, it suffices to show that $M_{(a,r)} = \D_{(a,r)}\otimes R_G$ with the filtration given by intersecting with $\mathrm{Fil}^1$ is a sub filtered F-crystal of $M_G$. It suffices to show that $M_{(a,r)}$ is stable under $\phi$ and $\nabla_G$. Showing that $M_{(a,r)}/M_{(a+1,r)}$ comes from base change from the $F$-crystal $\D_{(a,a)}$ would establish the last claim.  
		
		By \cite[E.1]{Ki2}, the connection $\nabla_G$ is given by an element of $\textrm{Lie}\ U_{G,\nu} \otimes \Omega^1_{R_G}$. It follows that $\nabla_G$ preserves all the $M_{(a,r)}$, and is the trivial connection on $M_{(a,r)}/M_{(a+1,r)}$.
		
		Lemma \ref{central} implies that $U^o_G \subset U^o_{G,\nu}$, and so $U_{G}^\circ$ also preserves the slope filtration on $\D(\pdiv)$, and acts as the identity on the associated graded. By definition, $\phi$ is given by composing the Frobenius in $\D(\pdiv)$ and the tautological $U^\circ_G$-point of $R_G$, therefore it preserves $M_{(a,r)}$ and the induced $\phi$ on $M_{(a,r)}/M_{(a+1,r)}$ is given by base extensions of the Frobenius on $\D_{(a,a)}$. The result follows.
	\end{proof}
	
	\subsection{}Let $\pdiv^{\uu}$ denote the $p$-divisible group over $\Spf R_G$. For $1\le a \le r$, define  $\pdiva^{\uu}$ to be the subgroup of $\pdiv^{\uu}$ coming from the slope filtration. For $1 \leq a \leq b \leq r$, we have $\pdivb^{\uu} \subset \pdiva^{\uu}$ and $\pdivab^{\uu} = \pdiv_{(1,b)}^{\uu}/\pdiv_{(1,a-1)}^{\uu}$ is a deformation of $\pdivab = \displaystyle{\prod_{i=a}^b \pdiv_i}$. 
	
	The unipotent group $U^\circ_G$ preserves the slope filtration of $\D(\pdiv)$, therefore, $U^\circ_G$ acts on the subquotient $\D(\pdivab)$. Let $\Uab$ be the image of $U^\circ_G$ in $\GL(\D(\pdivab))$. The group $\Uab$ is  smooth, and the map from $U^\circ_G$ to $\Uab$ is a smooth map. Moreover for $a\leq a'\leq b'\leq b$ the map $U_G^\circ\rightarrow U^\circ_{(a',b')}$ factors through $U_{(a,b)}^\circ$, hence we obtain smooth maps $U_{(a,b)}^\circ\rightarrow U_{(a',b')}^\circ$. Let $\Rab$ be the complete local ring of $\Uab^\circ$. We have an inclusion $\Rab \rightarrow R_G$ as well as maps $R_{(a',b')}\rightarrow R_{(a,b)}$ for any $a',b'$ as above. Having fixed $a$ and $b$, we can and do choose coordinates on $R$ and $R_G$ as above, such that $R_{(a,b)}=\Ok_L[t_1,\cdots, t_m]$.
	
	The filtered $F$-crystal $M_{(a,r)}/M_{(b,r)}$ is associated to the $p$-divisible group $\pdivab^{\uu}$ over $R_G$. Its underlying $R_G$ module can be identified with $\D_{(a,b)}\otimes_{\Ok_L}R_G$, where $\D_{(a,b)} = \D(\pdivab)$. We will need the following lemma:
	\begin{lemma}\label{descenttoRa}
		The filtered $F$-crystal $M_{(a,r)}/M_{(b,r)}$ comes from base change from a filtered $F$-crystal on $\D_{(a,b)}\otimes_{\Ok_L}R_{(a,b)}$. Moreover the slope filtration on $\pdiv_{(a,b)}^u$ descends to $R_{(a,b)}$.
	\end{lemma}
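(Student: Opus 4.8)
The key point is that the two structures making $M_{(a,r)}/M_{(b,r)}$ a filtered $F$-crystal over $R_G$ — namely the Frobenius $\phi$ and the connection $\nabla_G$ — both factor through the quotient ring $R_{(a,b)}$, and the filtration is already defined over $\Ok_L$. Concretely, the plan is to examine the explicit formulas recalled in \S4.1. The Frobenius $\phi$ on $M_G = \D(\pdiv)\otimes_{\Ok_L} R_G$ is the composite of $\phi\otimes\phi$ with the tautological $R_G$-point $u_G$ of $U^\circ_G$; when we pass to the subquotient $\D_{(a,b)}\otimes_{\Ok_L} R_G$, the element $u_G$ acts through its image in $\Uab = \GL(\D_{(a,b)})$-valued point, and by construction of $\Uab$ this image is pulled back from the tautological $R_{(a,b)}$-point of $\Uab^\circ$ along $R_{(a,b)}\hookrightarrow R_G$. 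Since $\phi\otimes\phi$ sends $R_{(a,b)}$ into itself (it sends $t_i\mapsto t_i^p$ and in our chosen coordinates $R_{(a,b)} = \Ok_L[|t_1,\dots,t_m|]$ is spanned by the first $m$ variables, which $\phi$ preserves), the induced Frobenius on the subquotient preserves $\D_{(a,b)}\otimes_{\Ok_L} R_{(a,b)}$ and is the base change of a Frobenius defined there.

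Next I would treat the connection. By \cite[E.1]{Ki2}, as used already in the proof of Proposition \ref{slopefilt}, the connection $\nabla_G$ on $M_G$ lies in $\textrm{Lie}\,U_{G,\nu}\otimes \Omega^1_{R_G}$; by Lemma \ref{central}, $U^\circ_G\subset U^\circ_{G,\nu}$, so $\nabla_G$ preserves the slope filtration and acts on the subquotient $M_{(a,r)}/M_{(b,r)}$ via the image of $\textrm{Lie}\,U_{G,\nu}$ in $\textrm{Lie}\,\Uab$. One checks that the coefficients of this induced connection, viewed as elements of $\Omega^1_{R_G}$, actually lie in the image of $\Omega^1_{R_{(a,b)}}$: this is the infinitesimal shadow of the fact that the $U^\circ_G$-point $u_G$ controlling the crystal structure depends, on the subquotient $\D_{(a,b)}$, only through its image in $\Uab^\circ$, i.e. only on $t_1,\dots,t_m$. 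Hence $\nabla_G$ on $M_{(a,r)}/M_{(b,r)}$ is the base change along $R_{(a,b)}\to R_G$ of a connection on $\D_{(a,b)}\otimes_{\Ok_L} R_{(a,b)}$, and together with the Frobenius and the ($\Ok_L$-defined, hence base-changed) filtration this exhibits $M_{(a,r)}/M_{(b,r)}$ as pulled back from a filtered $F$-crystal over $R_{(a,b)}$. By the Grothendieck–Messing / Dieudonné equivalence over the formally smooth $\Ok_L$-algebra $R_{(a,b)}$, this filtered $F$-crystal corresponds to a $p$-divisible group over $\Spf R_{(a,b)}$ whose base change to $R_G$ is $\pdivab^{\uu}$; and since the slope filtration of $\pdivab^{\uu}$ was itself built out of the sub-$F$-crystals $M_{(a',r)}/M_{(b,r)}$ for $a\le a'\le b$, applying the same descent argument to each of these gives the descent of the slope filtration to $R_{(a,b)}$, proving the "moreover".

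The main obstacle I expect is verifying cleanly that the induced connection on the subquotient has coefficients in $\Omega^1_{R_{(a,b)}}$ rather than merely in $\Omega^1_{R_G}$ — that is, that the dependence on the "extra" coordinates $t_{m+1},\dots$ genuinely disappears after passing to $\D_{(a,b)}$. This should follow formally from the factorization $U^\circ_G\to \Uab^\circ$ of group schemes over $\Ok_L$ and the compatibility of \cite[E.1]{Ki2} with such maps of reductive-model data, but making the bookkeeping of the coordinate choices precise (so that $R_{(a,b)} = \Ok_L[|t_1,\dots,t_m|]$ sits inside $R_G$ compatibly with Frobenius and with the differentials) is the delicate part; everything else is a formal manipulation of the explicit descriptions already recorded in \S4.1.
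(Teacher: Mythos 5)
Your treatment of the Frobenius and the filtration is exactly the paper's: the image of the tautological point $u_G$ in $\GL(\D_{(a,b)}\otimes_{\Ok_L}R_G)$ factors through $\Uab^\circ$ and is pulled back from the tautological $\Rab$-point, and the Frobenius lift $t_i\mapsto t_i^p$ preserves $\Rab=\Ok_L[|t_1,\dots,t_m|]$, so the Frobenius and the ($\Ok_L$-rational) filtration on the subquotient are base-changed from $\Rab$. The gap is precisely the step you flag as ``the delicate part'': the claim that the induced connection on $M_{(a,r)}/M_{(b,r)}$ has coefficients in $\Omega^1_{\Rab}$ is asserted (``one checks \dots this is the infinitesimal shadow of the factorization $U^\circ_G\to\Uab^\circ$'') but never proved, and it does not follow formally from that factorization. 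The connection $\nabla_G$ is not a group-theoretic function of $u_G$; it is pinned down by a horizontality condition for the full Frobenius on $M_G$, which involves all the coordinates $t_1,\dots,t_n$, so a priori its restriction to the subquotient could still depend on $t_{m+1},\dots,t_n$. Without some rigidity input, the bookkeeping you defer is the whole content of the lemma.

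The paper closes this by a different device which you should import: the coordinate projection $R_G\to\Rab$ (killing $t_{m+1},\dots,t_n$) is Frobenius-equivariant and splits the inclusion $\Rab\hookrightarrow R_G$. Base-changing the filtered $F$-crystal $\D_{(a,b)}\otimes_{\Ok_L}R_G$ along this retraction produces a filtered $F$-crystal over $\Rab$ (in particular equipped with \emph{some} Frobenius-equivariant connection); base-changing it back to $R_G$ recovers the original Frobenius and filtration, since those were already defined over $\Rab$. One then invokes the fact, cited from \cite{integralMo}, that the connection of a filtered $F$-crystal is uniquely determined by the rest of the data; hence the connection pulled back from $\Rab$ must coincide with the one induced by $\nabla_G$, which is exactly the descent you need. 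So either quote such a uniqueness statement (or reprove it by the usual successive-approximation argument, using that Frobenius is topologically nilpotent on positive-degree differentials), or your proof of the connection step remains incomplete. Your handling of the ``moreover'' (applying the same descent to the sub-crystals $M_{(a',r)}/M_{(b,r)}$) and the passage back to a $p$-divisible group over $\Spf\Rab$ is fine once the connection step is repaired.
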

	\begin{proof}
		Recall that $R_G = \Ok_L[|t_1 \hdots t_m,\hdots t_n|]$ and $\Rab = \Ok_L[|t_1 \hdots t_m|]$. By definition, the image of $u_G$ (which we defined to be the tautological $\Spf R_G$ point of $U^\circ_G$) in $\GL(\D(\pdivab) \otimes R_G)$ factors through $\Uab^\circ$. By definition of $\Rab$, this point is induced by the tautological $\Rab$ point of $\Uab^\circ$. 
		
		Therefore, $\D_{(a,b)} \otimes R_G$ along with its Frobenius operator and filtration descends to $\Rab$. It suffices to prove that $\nabla$ also descends to $\Rab$. 
		
		The coordinates $t_i$ induce a Frobenius-equivariant map $R_G \rightarrow \Rab$, such that the composite $\Rab \rightarrow R_G \rightarrow \Rab$ is the identity on $\Rab$. Therefore, base changing $\D_{(a,b)}\otimes_{\Ok_L}R_G$ along $R_G\rightarrow R_{(a,b)},$ gives a filtered $F$-crystal over $\Rab$, whose underlying module is $\D_{(a,b)}\otimes_{\Ok_L}R_{(a,b)}$ in particular it is equipped with a Frobenius equivariant connection. 
		
		Base changing the filtered $F$-crystal $\D(\pdivab) \otimes \Rab$ to $R_G$, we get back $\D(\pdivab)\otimes R_G$ along with the original Frobenius and filtration. By \cite{integralMo}, the connection on a filtered F-crystal is determined by the rest of the data, therefore the connection base changed from $\Rab$ has to equal the original connection. The result follows.
	\end{proof}
	We use the same notation $\pdiv^{\uu}_{(a,b)}$ for the $p$-divisible group over $R_{(a,b)}$ constructed in the previous lemma. Since the slope filtration descend to $\pdiv^{\uu}_{(a,b)}$, for $a\leq a' \leq b' \leq b$ we have a subquotient over $\pdiv^{\uu}_{(a,b)}$, corresponding to $(a',b')$. By the same proof as the previous lemma, this descends to $R_{(a',b')}$. We define $\Def_{(a,b)}=\Spf R_{(a,b)}$ a smooth formal scheme. We obtain the following diagram:
	
	\begin{equation}\label{cascade}\xymatrix{&    &\Def_{(1,r)}\ar[ld]\ar[rd]& &\\
&\Def_{(1,r-1)} \ar[ld] \ar[rd]  & & \Def_{(2,r)}\ar[ld] \ar[rd] & \\
	 \Def_{(1,r-2),} & & \Def_{(2,r-1)}  & & \Def_{(3,r)}
}
	\end{equation}
	We will need an explicit description of the maps $\Def_{(1,r)}\rightarrow \Def_{(a,b)}$ on $\Ok_L$ points. Given a $G_{\Ok_L}$-adapted lifting of $\pdiv$ to $\Ok_L$, by Proposition \ref{unipfilt}  there exists a unique element $u\in U^\circ_G(\Ok_L)$ reducing to the identity mod $p$ such that the filtration on $\D(\pdiv)$ is induced by the translate of $F^{\can}$ by $u$. The image of the corresponding point in $\Defab$, corresponds to the filtration on $\D_{(a,b)}$ given by the translate of $F^{\can}_{(a,b)}$ by $\overline{u}$, where $\overline{u}$ is the image of $u$ in $U_{(a,b)}^{\circ}$.
	
	We record the above the in following proposition:
	
	\begin{prop} \label{rest}
		The formal schemes $\Defab$ are smooth, and for $a\leq a'\leq b'\leq b$ the restriction maps $\Defab \rightarrow \Def_{(a',b')}$ are smooth. The $\Ok_L$ points of $\Defab$ are in bijection with elements of $\Uab^\circ(\Ok_L)$ reducing to the identity mod $p$, where the bijection is given by sending $u\in \Uab^\circ(\Ok_L)$  to the filtration of $\D_{(a,b)}$ induced by $u\mu^{-1}$. Finally, $\Uab^\circ$ is stable under conjugation by the image of the projection of $\nu$ to $\GL(\D_{(a,b)})$. 
	\end{prop}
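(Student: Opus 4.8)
The plan is to verify the three assertions of Proposition \ref{rest} by unwinding the constructions of the preceding subsections; all of them reduce to statements about the opposite unipotent $U^\circ_G$ and its images, together with the compatibility of the slope filtration with the cocharacters $\mu$ and $\nu$. The smoothness assertions I would get for free: $U^\circ_{(a,b)}$ is the image of the smooth affine group $U^\circ_G$ under a homomorphism of group schemes, and a quotient map of smooth affine group schemes is smooth, so $\Def_{(a,b)} = \Spf R_{(a,b)}$ is formally smooth over $\Ok_L$ (it is the complete local ring at the identity of $U^\circ_{(a,b)}$, which is itself a smooth unipotent group). For the restriction maps, I already observed in the text preceding the statement that for $a\le a'\le b'\le b$ the map $U^\circ_G \to U^\circ_{(a',b')}$ factors through $U^\circ_{(a,b)}$, inducing a homomorphism $U^\circ_{(a,b)}\to U^\circ_{(a',b')}$; passing to complete local rings at the identity gives $R_{(a',b')}\to R_{(a,b)}$, and this is smooth because a surjection of smooth unipotent groups over $\Ok_L$ is smooth. (One should note $U^\circ_{(a,b)}\to U^\circ_{(a',b')}$ is indeed surjective since $U^\circ_G$ already surjects onto $U^\circ_{(a',b')}$.)

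\textbf{The bijection on $\Ok_L$-points.} By Proposition \ref{rest}'s analogue at the top level, i.e. Proposition \ref{unipfilt}, the $\Ok_L$-points of $\Def_{(1,r)}$ correspond to elements $u\in U^\circ_G(\Ok_L)$ reducing to the identity mod $p$, via $u\mapsto uF = u\mu^{-1}F^{\can}_{\mathrm{triv}}$ — that is, the filtration $u\mu^{-1}$ on $\D(\pdiv)$. I would argue that $\Def_{(a,b)}$ is \emph{by construction} the complete local ring of $U^\circ_{(a,b)}$, so its $\Ok_L$-points reducing to the identity are exactly $U^\circ_{(a,b)}(\Ok_L)^{\wedge}$, the elements reducing to $1$ mod $p$; and the descent Lemma \ref{descenttoRa} identifies the universal filtered $F$-crystal over $R_{(a,b)}$ as $\D_{(a,b)}\otimes R_{(a,b)}$ with Frobenius twisted by the tautological $R_{(a,b)}$-point of $U^\circ_{(a,b)}$ and filtration induced by $\mu$ (acting on $\D_{(a,b)}$ via its image). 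Therefore a point $\bar u\in U^\circ_{(a,b)}(\Ok_L)$ reducing to the identity gives the lift whose filtration is $\bar u\mu^{-1}$ on $\D_{(a,b)}$, which is exactly the claimed correspondence. To see this is compatible with the projection $\Def_{(1,r)}\to\Def_{(a,b)}$, I would invoke the explicit description of that projection on $\Ok_L$-points recorded just before the proposition: $u$ maps to the image $\bar u$ of $u$ in $U^\circ_{(a,b)}$, which translates $F^{\can}_{(a,b)}$, i.e. induces the filtration $\bar u\mu^{-1}$.

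\textbf{Stability of $U^\circ_{(a,b)}$ under conjugation by $\nu$.} By Lemma \ref{central}, $\mu$ is central in $M = M_b$, the centralizer of $\nu$, so $U^\circ_G \subseteq U^\circ_{G,\nu}$, the opposite unipotent of $\nu$ in $G_{\Ok_L}$; in particular $\nu$ normalizes $U^\circ_{G,\nu}$ and hence acts on it by conjugation. The subgroup $U^\circ_G\subseteq U^\circ_{G,\nu}$ is itself normalized by $\mu$ (since $\mu$ is central in $M$, which contains $\nu$... here one uses that $U^\circ_G$ is generated by root subgroups and $\nu$ has a well-defined grading on $\mathfrak g$), and more to the point: $\nu$ preserves the slope filtration \eqref{filt} on $\D(\pdiv)$ with trivial action on the graded pieces, $U^\circ_G$ does likewise, so the conjugation action of $\nu$ on $\GL(\D(\pdiv))$ descends to $\GL(\D_{(a,b)})$ and carries $U^\circ_G$ into itself — hence carries the image $U^\circ_{(a,b)}$ into itself. \textbf{The main obstacle} I anticipate is precisely the bookkeeping in this last point: one must check that the conjugation action of $\nu$ (a fractional cocharacter, so really of $n\nu$ for suitable $n$, or of the associated grading) genuinely normalizes $U^\circ_G$ and not merely $U^\circ_{G,\nu}$. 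The cleanest route is to note that $U^\circ_G = U^\circ_{G,\nu}\cap$ (the opposite unipotent $U^\circ$ of $\mu$ in $\GL(\D(\pdiv))$ intersected with $G$), that $\nu$ centralizes $M\ni\mu$ hence normalizes the Levi and parabolic data defining $U^\circ_G$, so the grading by $\nu$-weights restricts to a grading of $\mathrm{Lie}\,U^\circ_G$, giving the $\nu$-stability; then pushing forward along $U^\circ_G\twoheadrightarrow U^\circ_{(a,b)}$ and using that $\nu$ acts on $\D_{(a,b)}$ (as a subquotient on which it has well-defined integral weights after the slope shift) finishes the argument.
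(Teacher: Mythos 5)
Your proposal is correct and follows essentially the same route as the paper: the smoothness statements and the description of $\Ok_L$-points are exactly the assertions already established in the discussion preceding the proposition (via Proposition \ref{unipfilt}, Lemma \ref{descenttoRa} and the construction of $\Rab$ as the complete local ring of $\Uab^\circ$), and the paper's proof likewise reduces everything to the single remaining point that $\Uab^\circ$ is $\nu$-stable because $U^\circ_G$ is, which is your argument via Lemma \ref{central} (i.e.\ $\mu$ central in $M=\mathrm{Cent}(\nu)$, so $\nu$-conjugation preserves the $\mu$-weight spaces and hence $U^\circ_G$, and this pushes forward to the image in $\GL(\D_{(a,b)})$). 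One small caution: the blanket claim that a surjection of smooth affine group schemes is smooth is false in general over a base with residue characteristic $p$ (relative Frobenius), though here it is harmless since the opposite unipotents attached to the commuting cocharacters $\mu$ and $\nu$ are vector groups and the map $U^\circ_G\to\Uab^\circ$ is a projection of such, which is what the paper implicitly uses.
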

	\begin{proof}
		The only part that hasn't been proved is $\Uab^\circ$ being stable under conjugation by the projection of $\nu$. This follows directly from the fact that $U^\circ_G$ is stable by conjugation by the image of $\nu$.
	\end{proof}

We end this section by tabulating facts about $\Def_G$. To that end, consider the following hypotheses.	\subsection*{Hypotheses}
	Let $\Def_{(1,r)} \subset \Def(\pdiv)$ be a smooth subscheme satisfying the following properties:
	
	\begin{enumerate} 
		\item The slope filtration canonically lifts to the family $\pdiv^{\uu} \rightarrow \Def_{(1,r)}$. \label{hyp1}

		\item $\Defab$ is a formally smooth subscheme of $\Def(\pdivab)$ defined over $\Ok_L$, and contains $\pdivab^{(\can)}$. (Here, $\Defab$ denote the image of $\Def_{(1,r)}$ inside $\Def(\pdivab)$ given by the canonical restriction maps.) \label{hyp3}
		
		\item For $a \le b$, the restriction map $\Def_{(1,r)} \rightarrow \Defab$ is smooth. \label{hyp4}
		
		\item There exists a unipotent subgroup\footnote{For us, the group $U_G$ will take the place of $U$.} $U \subset \GL(\D)$ preserving the slope filtration, such that the $\Ok_L$ points of $\Def_{(1,r)}$ precisely correspond to filtrations induced by $g\mu$, with $g \in U(\Ok_L)$. Further, $U$ is preserved under conjugation by the image of $\nu$. \label{hyp5}
		
		\item The $\Ok_L$ points of $\Defab$ correspond to the filtrations of $\D(\pdivab)$ induced by $g\mu$ with $g\in \Uab^\circ(\Ok_L)$, where $\Uab^\circ \subset \GL(\D(\pdivab))$ is the image of $U$ restricted to $\D(\pdivab)$. Further, $\Uab^\circ$ is preserved under conjugation by the image of $\nu$. \label{hyp6}
	\end{enumerate}
	
	The following result follows directly from the definitions of the properties:
	\begin{prop}\label{in}
		If $\Def_{(1,r)}$, as above, satisfies all the above properties, then so do the spaces $\Defab$, for $1 \le a \le b \le r$. 
	\end{prop}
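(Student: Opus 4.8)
The plan is to verify each of the five properties in the Hypothesis for the space $\Defab$, given that $\Def_{(1,r)}$ satisfies all of them. The key observation is that everything about $\Defab$ is obtained from the corresponding structure on $\Def_{(1,r)}$ by the canonical restriction maps, and that the slope decomposition $\pdiv = \prod_i \pdiv_i$ restricts compatibly to $\pdivab = \prod_{i=a}^b \pdiv_i$, with the canonical lift restricting to $\pdivab^{\can}$ (as recorded in \S 4.2 and \S 4.3). So in each case one simply checks that the relevant object for $\Defab$ is the ``restriction'' of the object for $\Def_{(1,r)}$, and that the property, being local/structural, is inherited.

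Concretely, I would argue as follows. For property \eqref{hyp1}: by Proposition \ref{slopefilt} (or rather its analogue encoded in the hypothesis), the slope filtration lifts to $\pdiv^{\uu}\to\Def_{(1,r)}$; pushing forward along the subquotient construction $M_{(a,r)}/M_{(b,r)}$ (Lemma \ref{descenttoRa}) gives the slope filtration on the universal family over $\Defab$, since the slopes $\lambda_a > \cdots > \lambda_b$ occurring in $\pdivab$ are a consecutive subset of those for $\pdiv$ and the filtration steps in between are exactly the $\D_{(a',r)}/\D_{(b,r)}$ for $a\le a'\le b$. For property \eqref{hyp3}: $\Defab$ is formally smooth by \eqref{hyp4} applied to $\Def_{(1,r)}$ composed with the identity (or directly, as it is a quotient of a power series ring), it is defined over $\Ok_L$ since the restriction maps are, and it contains $\pdivab^{\can}$ because $\Def_{(1,r)}$ contains $\pdiv^{\can}$ and the restriction map sends $\pdiv^{\can}$ to $\pdivab^{\can}$. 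For property \eqref{hyp4}: given $a\le a'\le b'\le b$, the restriction $\Defab\to\Def_{(a',b')}$ fits into a commuting triangle with the smooth maps $\Def_{(1,r)}\to\Defab$ and $\Def_{(1,r)}\to\Def_{(a',b')}$; one concludes smoothness of $\Defab\to\Def_{(a',b')}$ by a standard descent argument for formal smoothness along the smooth surjection $\Def_{(1,r)}\to\Defab$.

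For properties \eqref{hyp5} and \eqref{hyp6}: the unipotent group for $\Defab$ should be $\Uab^\circ$, the image of $U$ in $\GL(\D(\pdivab))$ obtained by restricting to the subquotient $\D_{(a,b)}$; this is a unipotent subgroup, preserves the (induced) slope filtration on $\D(\pdivab)$, and is preserved under conjugation by the projection of $\nu$ to $\GL(\D(\pdivab))$ because $U$ is preserved under conjugation by $\nu$ and $\nu$ respects the slope decomposition (this is exactly Proposition \ref{rest}). The statement that the $\Ok_L$-points of $\Defab$ correspond to filtrations of $\D(\pdivab)$ induced by $g\mu$ with $g\in\Uab^\circ(\Ok_L)$ is precisely the explicit description of the map on $\Ok_L$-points given in \S 4.4 (before Proposition \ref{rest}): an $\Ok_L$-point of $\Def_{(1,r)}$ is given by $u\in U(\Ok_L)$ reducing to the identity mod $p$, and its image in $\Defab$ is the filtration of $\D_{(a,b)}$ induced by $\bar u\mu$, where $\bar u$ is the image of $u$ in $\Uab^\circ$; since $U(\Ok_L)\to\Uab^\circ(\Ok_L)$ is surjective on the relevant (congruence-mod-$p$) subgroups (the map $U\to\Uab^\circ$ being smooth and surjective, use Hensel/smoothness), this gives the desired bijection.

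The main obstacle I expect is not conceptual but bookkeeping: one must be careful that the various ``sub-Hypotheses'' for $\Defab$ refer to the slope decomposition of $\pdivab$ internally (i.e. with its own Levi, its own $\nu$-projection, its own canonical lift), and check that restricting the data for $\pdiv$ genuinely produces the corresponding internal data for $\pdivab$ — in particular that $\pdivab^{\can}$ really is the canonical lift of $\pdivab$ as a $p$-divisible group with the induced $M$-structure, and that the projection of $\nu$ to $\GL(\D(\pdivab))$ is the Newton cocharacter relevant to $\pdivab$. These compatibilities are exactly what was established in \S 4.2--\S 4.4, so the proof amounts to assembling them; the only genuine input beyond ``unwinding definitions'' is the descent of formal smoothness along a smooth surjection of formal schemes, which is standard.
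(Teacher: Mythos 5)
Your proposal is correct and follows the same route as the paper, which simply asserts that the proposition ``follows directly from the definitions of the properties'': the properties for $\Defab$ are inherited by applying the hypotheses for $\Def_{(1,r)}$ to the sub-intervals $(a',b')$, together with the compatibilities of \S 4.2--4.4 (Proposition \ref{slopefilt}, Lemma \ref{descenttoRa}, Proposition \ref{rest}). Your only added ingredient, descending formal smoothness of $\Defab\to\Def_{(a',b')}$ along the smooth surjection $\Def_{(1,r)}\to\Defab$ (a standard retract/section argument for complete local rings), is correct and, if anything, makes explicit a step the paper leaves implicit.
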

	
	\begin{prop}
		The $G$-deformation space $\Def_G$ of $\pdiv$ safisfies all the above properties. 
	\end{prop}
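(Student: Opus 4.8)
The plan is to take for $U$ the opposite unipotent $U^\circ_G$ of $\mu$ in $G_{\Ok_L}$ introduced in \S4.1, and then to verify the five listed properties one at a time; each of them turns out to be a repackaging of a result already established in this section, so the only real work is checking that the various pieces of notation match up.

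For the first property I would simply cite Proposition \ref{slopefilt}, which produces the canonical slope filtration on the versal $G_{\Ok_L}$-adapted family $\pdiv^{\uu}$ over $\Spf R_G = \Def_G$, with graded pieces the $\pdiv^{\can}_a \times_{\Ok_L}\Ok_K$. For the second and third properties I would combine Lemma \ref{descenttoRa}, which constructs $\Defab = \Spf R_{(a,b)}$ together with the filtered $F$-crystal on $\D_{(a,b)}\otimes_{\Ok_L}R_{(a,b)}$ computing the subquotient $\pdivab^{\uu}$, with Proposition \ref{rest}, which records that each $\Defab$ is a smooth formal scheme over $\Ok_L$ and that the restriction maps $\Defab\rightarrow \Def_{(a',b')}$ are smooth (in particular for $(a,b)=(1,r)$). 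The inclusion $\Defab\hookrightarrow\Def(\pdivab)$ comes from the descent of $\pdivab^{\uu}$ in Lemma \ref{descenttoRa}, and $\Defab$ contains $\pdivab^{\can}$ because the latter is the point $u=1$, i.e. the zero section.

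For the fourth and fifth properties the relevant unipotent is $U=U^\circ_G$. First, Lemma \ref{central} gives $U^\circ_G\subseteq U^\circ_{G,\nu}$, and every element of $U^\circ_{G,\nu}$ preserves the slope filtration \eqref{filt} on $\D(\pdiv)$; moreover, since $\mu$ and $\nu$ both factor through the fixed maximal torus $T$, the group $U^\circ_G$ is generated by the root subgroups $U_\alpha$ with $\langle\alpha,\mu\rangle<0$, and conjugation by $\nu$ carries each such $U_\alpha$ into itself, so $U$ is $\nu$-stable. That the $\Ok_L$-points of $\Def_G$ are precisely the filtrations of $\D(\pdiv)$ induced by $u\mu$ with $u\in U^\circ_G(\Ok_L)$ reducing to the identity modulo $p$ is Proposition \ref{unipfilt}. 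For the fifth property I would identify the image $\Uab^\circ\subseteq \GL(\D(\pdivab))$ of $U^\circ_G$ restricted to the subquotient $\D(\pdivab)$ with the group of the same name in \S4.4, and then quote Proposition \ref{rest} together with the explicit description, recorded just before it, of the maps $\Def_{(1,r)}\rightarrow\Defab$ on $\Ok_L$-points: this identifies the $\Ok_L$-points of $\Defab$ with filtrations of $\D(\pdivab)$ induced by $g\mu$, $g\in\Uab^\circ(\Ok_L)$, and gives $\nu$-stability of $\Uab^\circ$.

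The main obstacle is purely a matter of bookkeeping rather than mathematics: one must check that the single unipotent $U=U^\circ_G$ demanded by the fourth property has images under the restriction-to-subquotient maps coinciding with the groups $\Uab^\circ$ of Proposition \ref{rest}, and that the normalizations used in Propositions \ref{unipfilt} and \ref{rest} (whether a filtration is written as induced by $u\mu$ or by $u\mu^{-1}$, and which representative of a $G_{\Ok_L}$-conjugacy class of cocharacters is taken) are consistent. Once these identifications are pinned down, the proposition is a formal consequence of the results of \S4; Proposition \ref{in} then transfers all five properties to every $\Defab$.
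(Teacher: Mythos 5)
Your proposal is correct and follows the paper's own route: the paper likewise disposes of the first property by citing Proposition \ref{slopefilt} and the remaining four by citing Proposition \ref{rest} (whose content you merely unpack via Lemma \ref{descenttoRa}, Proposition \ref{unipfilt}, and the $\nu$-stability of $U^\circ_G$). Your extra attention to the choice $U=U^\circ_G$ and to the $\mu$ versus $\mu^{-1}$ normalizations is just a more explicit rendering of the same two-line argument.
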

	\begin{proof}
		Property \ref{hyp1} follows from Proposition \ref{slopefilt}. Properties \ref{hyp3} -- \ref{hyp6} follow from Proposition \ref{rest}.
	\end{proof}

	\section{Cascades and Fontaine--Laffaille theory}
		For this section, notation is as in \S 4. We first recall the definition of a cascade from \cite[\S 2.2]{Mo}. 
	\begin{definition}
		An $r$-cascade over $\Spec \Ok_L$ consists of the following data: 
		\begin{enumerate}
			\item Commutative formal groups $E_{(i,j)}$ over $\Ok_L$.
			\item Objects $\Gamma_{(i,j)}$ for $1 \le i < j \le r$; if $i \geq j$, we put $\Gamma_{(i,j)} = \Spec \Ok_L$. 
			\item Morphisms $\lambda_{(i,j)}: \Gamma_{(i,j)} \rightarrow \Gamma_{(i,j-1)}$ and $\rho: \Gamma_{(i,j)} \rightarrow \Gamma_{(i+1,j)}$ satisfying the commutativity relation $\rho_{(i,j-1)} \circ \lambda_{(i,j)} = \lambda_{(i+1,j)} \circ \rho_{(i,j)}$.
			\item The structure on $\Gamma_{(i,j)}$ of a biextension of $(\Gamma_{(i,j-1)},\Gamma_{(i+1,j)})$ by $E_{(i,j)} \times \Gamma_{(i+1,j-1)}$.
		\end{enumerate}
	\end{definition}
	As in \cite{Mo}, we note that the fiber of $\Gamma_{(i,j)}$ over any point of $\Gamma_{(i+1,j)}$ ({\it resp.} $\Gamma_{(i,j-1)}$) is a formal group. Further, every $r$-cascade has a natural zero section $0 \in \Gamma_{(1,r)}(\Ok_L)$ (denote by the same symbol $0$ its image in $\Gamma_{(i,j)}$). By choosing $E_{(i,j)} = \Ext(\pdiv^{\can}_j,\pdiv^{\can}_i)$ and $\Gamma_{(i,j)}(\pdiv^{\can})$ to be the subspace of the deformation space of $\pdiv_{(i,j)}$ which admits a slope filtration as in Proposition \ref{slopefilt}, we get a natural $r$-cascade which contains $\Spf R_G$, where $r$ is the number of slopes in the slope decomposition. This follows from the same proof as in \cite[\S 2.3.6]{Mo}. In this case, the canonical lift is the $0$-section of $\Gamma(\pdiv^{\can})$.

		\begin{definition}
		Let $\Gamma = \Gamma_{(i,j)}$ be an $r$-cascade and let $\Delta$ be a formal subscheme. Let $\Delta_{(i,j)}$ denote the image of $\Delta$ in $\Gamma_{(i,j)}$. We say that $\Delta$ is a shifted subcascade if:
		\begin{enumerate}
			\item The zero section belongs to $\Delta$. 
			\item The fiber of $\Delta_{(i,j)}$ over $0\in \Delta_{(i+1,j)}$ (and $0\in \Delta_{(i,j-1)}$) is a subgroup of the fiber of $\Gamma_{(i,j)}$ over $0 \in \Gamma_{(i+1,j)}$ (and $0\in \Gamma_{(i,j-1)}$).
			\item The fiber of $\Delta_{(i,j)}$ over any other $\Ok_K$ point $P\in \Delta_{(i+1,j)}(\Ok_K)$ (and $P\in \Delta_{(i,j-1)}(\Ok_K)$) is a coset of a subgroup of the fiber of $\Gamma_{(i,j)}$ over $P \in \Gamma_{(i+1,j)}(\Ok_K)$ (and $P\in \Gamma_{(i,j-1)}(\Ok_K)$).
		\end{enumerate}
	\end{definition}
	
	The main result of this section is 
	\begin{thm}\label{cmhyp}
		If $\Def_{(1,r)} \subset \Def(\pdiv)$ satistfies Properties \ref{hyp1} -- \ref{hyp6}, then $\Def_{(1,r)}$ is a shifted subcascade of the cascade $\Gamma(\pdiv)$. Consequently, $\Def_G$ is a shifted subcascade. 
	\end{thm}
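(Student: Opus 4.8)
The plan is to prove Theorem \ref{cmhyp} by induction on $r$, the number of slopes, exploiting the recursive structure of cascades and the closure statement Proposition \ref{in}, which guarantees that all the subquotient spaces $\Defab$ inherit Properties \ref{hyp1}--\ref{hyp6}. The base case $r=2$ is the heart of the matter: here $\Def_{(1,2)}$ is a smooth subscheme of the Ext-space $\Gamma_{(1,2)}(\pdiv) = \Ext(\pdiv_2^{\can},\pdiv_1^{\can})$ — the embedding into the Ext-space coming from Property \ref{hyp1}, the lift of the slope filtration — and one must show it is a formal subgroup. By Property \ref{hyp5} its $\Ok_L$-points correspond to filtrations induced by $g\mu$ with $g \in U(\Ok_L)$; using the smoothness of $\Def_{(1,2)}$ (so that $\Ok_L$-points are dense and a closed formal subscheme is determined by them) it suffices to check that this set of $\Ok_L$-points is closed under the group law of the Ext-space. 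This is precisely the Fontaine--Laffaille computation referenced in the introduction and deferred to the Appendix; I would invoke it here as the key input, noting that the group law on $\Ext$ corresponds, on Dieudonn\'e modules, to an operation on the off-diagonal block of the filtration that is visibly compatible with the group structure of $U$.

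For the inductive step, assume the theorem for all cascades with fewer than $r$ slopes. I must verify the three conditions in the definition of a shifted subcascade for $\Delta = \Def_{(1,r)}$ inside $\Gamma(\pdiv)$. Condition (1), that the zero section lies in $\Def_{(1,r)}$, is exactly Property \ref{hyp3} (the space contains $\pdiv^{(\can)}$, whose associated point is the $0$-section, as recorded after the cascade definition). For conditions (2) and (3), I observe that the fibers of $\Gamma_{(1,r)}$ over $\Gamma_{(2,r)}$ (resp. over $\Gamma_{(1,r-1)}$) are formal groups — indeed, by the biextension structure, over a point of $\Gamma_{(2,r)}$ the fiber is an extension of $\Gamma_{(1,r-1)}$ by $E_{(1,r)}$, hence a formal group once a base point is fixed. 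The restriction maps $\Def_{(1,r)} \to \Def_{(2,r)}$ and $\Def_{(1,r)} \to \Def_{(1,r-1)}$ are smooth by Property \ref{hyp4}, and by Proposition \ref{in} the targets $\Def_{(2,r)}$, $\Def_{(1,r-1)}$ are themselves shifted subcascades (of cascades with $r-1$ slopes), so the images $\Delta_{(i,j)}$ behave as required. The content is then to show: the fiber of $\Def_{(1,r)}$ over $0 \in \Def_{(2,r)}$ is a subgroup of the corresponding fiber of $\Gamma_{(1,r)}$, and over a general $\Ok_K$-point $P$ it is a coset of that subgroup.

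To establish the subgroup/coset statement I would again reduce to $\Ok_L$-points using the smoothness of all the spaces involved, and translate the fiber description via Proposition \ref{rest} and Property \ref{hyp5}: the $\Ok_L$-points of $\Def_{(1,r)}$ are filtrations induced by $g\mu$ with $g$ in the unipotent $U$, and the projections to $\Def_{(2,r)}$ and $\Def_{(1,r-1)}$ correspond to the images of $g$ in $U_{(2,r)}^\circ$ and $U_{(1,r-1)}^\circ$. The fiber over $0 \in \Def_{(2,r)}$ thus corresponds to those $g$ mapping to the identity in $U_{(2,r)}^\circ$ — a closed subgroup of $U$ — while the fiber over a general point $P$ corresponds to a coset of this subgroup in $U$; combined with the base-case fact that the relevant one-step Ext-space extensions carry the group law matching $U$ (the Fontaine--Laffaille computation applied to the subquotient $\pdiv_{(a,a+1)}$, which is legitimate because all properties descend by Proposition \ref{in}), this gives conditions (2) and (3). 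Finally one checks the two projections $\Def_{(1,r)} \to \Def_{(1,r-1)}$ and $\Def_{(1,r)} \to \Def_{(2,r)}$ are compatible with the cascade morphisms $\lambda$ and $\rho$ — immediate from the construction of the slope filtration and the functoriality of Dieudonn\'e theory.

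I expect the main obstacle to be the base case: rigorously identifying the group law on the Ext-space with the group structure on the unipotent $U$ at the level of $\Ok_L$-points, i.e. the Fontaine--Laffaille calculation. This is where one genuinely leaves the group-theoretic formalism and must compute with Dieudonn\'e modules and their filtered $\phi$-module structure over $S$; the subtlety is that the group law on $\Ext(\pdiv_2^{\can}, \pdiv_1^{\can})$ is defined via Baer sum of extensions, and one must check this matches addition of the off-diagonal blocks under the coordinates provided by $U$, uniformly over all $\Ok_L$-points (not just near the identity). Everything else — the inductive bookkeeping, the reduction to $\Ok_L$-points via smoothness, the verification of the biextension compatibilities — is formal once this input is in hand.
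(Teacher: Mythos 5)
Your overall route is the paper's own: the zero section lies in $\Def_{(1,r)}$ because the hypotheses put the canonical lift there, smoothness reduces everything to $\Ok_L$-points, and the heart of the matter is the Fontaine--Laffaille/Honda-system computation identifying the Baer sum on the relevant $\Ext$-spaces with addition of the unipotent coordinates supplied by Property \ref{hyp5}. (The induction on $r$ you set up is harmless but not actually needed; the paper deduces the theorem directly from the two Appendix statements, Corollary \ref{twoslopesgroup} and Proposition \ref{FLcosetcalc}.)

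There is, however, one genuine thin spot: condition (3) of a shifted subcascade, the coset statement over a \emph{non-zero} point $P\in\Defab(\Ok_K)$. You argue that the fiber over $P$ ``corresponds to a coset of this subgroup in $U$'' and that this, combined with the base-case computation ``applied to the subquotient $\pdiv_{(a,a+1)}$,'' gives the claim. But condition (3) is a statement about the group law of the fiber of $\Gamma_{(1,r)}$ over $P$, i.e.\ the Baer sum in $\Ext(\pdivh_{(a,b)},\pdiv^{\can}_{a-1})$ where $\pdivh_{(a,b)}$ is the (non-canonical) deformation corresponding to $P$; being a coset inside the unipotent $U$ does not by itself transfer to a coset for this group law, and the base-case computation you invoke only treats extensions of the canonical lifts. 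The paper closes exactly this gap with Proposition \ref{FLcosetcalc}: for $x,y,z$ in the fiber over $s\neq 0$ one computes, via Honda systems, that $x+y-z$ has filtration given by $u+u'-u''$, using crucially that $x,y,z$ all induce the same extension class over the lower part (so the corresponding blocks of $u,u',u''$ agree) and that $U^\circ_G$ is abelian; this is what shows the fiber is a translate of a subgroup. Your plan is salvageable by running the same Fontaine--Laffaille computation over the deformed base (as in Propositions \ref{twoslopecalc} and \ref{FLcosetcalc}), but as written the appeal to the adjacent-slope, canonical-base case does not cover it.
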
 
	
	\begin{proof}
		The space $\Def_{(1,r)}$ contains the $0$ section of $\Gamma(\pdiv)$, beause by Hypothesis \ref{hyp5} it contains the canonical lift. 
		
		The proof now follows from Corollary \ref{twoslopesgroup} and Proposition \ref{FLcosetcalc} below. 
		
	\end{proof}
	
	We first recall some definitions and results from \cite{FL}.

	\begin{definition}A filtered Dieudonn\'e module consists of a triple $(M,(M_{i})_{i\in\Z},(\varphi_i)_{i\in\Z})$ where:

		i) $M$ is a finite $\Ok_L$-module.
		
		ii) $M_i\subset M$ is a decreasing filtration which is exhaustive and separated.
		
		iii) $\varphi_i:M_i\rightarrow M$ a $\sigma$-semilinear map such that the following diagram commutes:
		
\[\xymatrix{M_{i+1} \ar[r]	\ar[d]_{\varphi_{i+1}} & M_i\ar[d]^{\varphi_i}\\
	M \ar[r]^{\times p} &M
}\]
		
		We write $MF$ for the category of filtered Dieudonn\'e modules and $MF^{tor}$ the subcatory of $MF$  consisting of triples above
		for which $M $ is a torsion $\Ok_L$ module. It is proved in \cite{FL} that $MF^{\tor}$ is an abelian category. As a special case of the above definition we have the following:
		
	\end{definition}
	
	\begin{definition}A Honda system is a pair $(\D,F)$ consisting of a finite free $\mathcal{O}_L$-module $\D$, a $\mathcal{O}_L$ submodule and a $\sigma$-semilinear map $\varphi:\D\rightarrow \D$ such that:
		
		i) $p\D\subset \varphi(\D)$.
		
		ii) The natural map induces an isomorphism $F/pF\cong \D/\varphi(\D)$.
	\end{definition}
	
	It is known that there is an anti-equivalence of categories between the category of $p$-divisible groups over $\mathcal{O}_L$ and the category of Honda systems. The equivalence is given by sending a $p$-divisible group over $\mathcal{O}_L$ to the Dieudonn\'e module of its special fiber together with the lift of the Hodge filtration (see \cite[\S 5 Theorem 1.6]{Me}).
	
	Suppose $\pdiv$ is a $\mu$-ordinary $p$-divisible group with $G$-structure whose underlying $p$-divisible has two slopes. Let $\pdiv=\pdiv_1\times\pdiv_2$ be given by the slope decomposition and let $(\D_i,F_i)$ denote the corresponding Honda systems for the $p$-divisible groups $\mathscr{G}_1^{\can}$, $\mathscr{G}_2^{\can}$. Suppose $\mathscr{G},\mathscr{G}'$ are two extensions of $\mathscr{G}^{\can}_2$ by $\mathscr{G}^{\can}_1$ with corresponding Honda system $(M,L)$  and $(M,L')$. We would like to compute the Honda system corresponding to the sum $\mathscr{G}''$ of $\mathscr{G}_2^{\can}$ and $\mathscr{G}_1^{\can}$ in $\Ext(\pdiv^{\can}_2,\pdiv^{\can}_1)$.
	
	Recall that in an arbitrary abelian category, given two extensions:
	$$0\rightarrow A\rightarrow B\rightarrow C\rightarrow 0$$
	$$0\rightarrow A\rightarrow B'\rightarrow C\rightarrow 0$$
	the sum of these classes in $\Ext^1(A,C)$ can be computed in the following way. First form the pullback of 
	$$0\rightarrow A\oplus A\rightarrow B\oplus B'\rightarrow C\oplus C\rightarrow 0$$ by the diagonal morphism $\Delta:C\rightarrow C\oplus C'$ to get obtain an extension $$0\rightarrow A\oplus A \rightarrow D\rightarrow C\rightarrow 0$$ We then push out this extension along the addition map $\Sigma: A\oplus A\rightarrow A$ to get $$0\rightarrow A\rightarrow E\rightarrow C\rightarrow 0$$ which is gives the sum of the two extensions.
	
	If $A$ and $C$ are $p$-divisible groups and $B$ is an extension of $A$ and $C$ in the category of fppf sheaves over $\Ok_L$, then by \cite[I 2.4.3]{Me} $B$ is a $p$-divisible group. Moreover if we compute the sum of the extensions $B$ and $B'$ in $\Ext^1(A,C)$ in the above way, at each stage, the extensions $D$ and $E$ obtained are also $p$-divisible groups. Thus to compute the Baer sum of extensions of $p$-divisible groups, we may work completely in the category of $p$-divisible groups by using the above pullback and pushout constructions. By the properties of the above equivalence, to compete the sum of the corresponding extensions of Honda systems, we may apply the corresponding construction in the category of Honda systems.
	
	\begin{remark}
		The category of Honda systems does not have pushouts in general. Strictly speaking, to carry out the constructions, one should reduce mod $p^n$ to get an object of $MF^{tor}$ which is an abelian category, apply the pullback-pushfoward construction, then take an inverse limit to recover the Honda system. On the $p$-divisible groups side, this corresponds to taking $p^n$ torsion, then taking a direct limit.
	\end{remark}
	
	\subsection*{Application to $p$-divisible groups}
	Recall that $\pdiv^{\can}=\displaystyle{\prod_{i=1}^r \pdiv^{\can}_i}$ is the canonical lift of a $\mu$-ordrinary $p$-divisible group $\pdiv_x = \displaystyle{\prod_{i=1}^r \pdiv_i}$. The associated Honda-system breaks up into a direct sum $\displaystyle{\big(\bigoplus_{i=1}^r \D_i,\bigoplus_{i=1}^r F_i\big)}$. Note that a choice of cocharcater $\mu$ as in \S 4.1 fixes direct summands of the $F_i$ in $\D_i$. To that end, let $F'_i\subset \D_i$ be such that $\D_i = F_i \oplus F'_i$. 
	
	Let $\tilde{\pdiv}$ be a $G_{\Ok_L}$-adapted lifting of $\pdiv_x$, then with associated Honda system ($\D,F$), then since $\tilde{\pdiv}$ and $\pdiv^{\can}$ have the same special fibre, we have an identification of $\D$ and $\displaystyle{\bigoplus_i \D_i}$, compatible with Frobenius.
	
	\subsection{}
	We fix $1\le a < b \le r$. For $K/L$ a finite extension, let $s$ an $\Ok_K$-valued point in $\Defab$, corresponding to the deformation $\pdivh_{(a,b)}$ of $\pdivab$. We define $\Def_{(a-1,b)}^s$ to be the fiber of $\Def_{(a-1,b)}\rightarrow\Def_{(a,b)}$ over $s$, and $\Def_{(a,b+1)}^s$ to be the fiber of $\Def_{a,b+1} \rightarrow \Def_{(a,b)}$ over $s$. Then $\Def_{(a-1,b)}^s$ is a subspace of $\Ext(\pdivh_{(a,b)},\pdiv^{\can}_{a-1}\otimes\Ok_K)$, and the analogous statement holds for $\Def_{(a,b+1)}^s$. Recall that we also have a canonical surjective map $U^\circ_{(a-1,b)} \rightarrow U^\circ_{(a,b)}$, as well as a canonical map $\displaystyle U^\circ_{(a-1,b)} \rightarrow \Hom(F_{a-1}, \D_{(a,b)})$. The latter map is described by $u \mapsto u_{a-1} = (u-1)|_{F_{a-1}}$, where $u-1$ refers to the endomorphism obtained by treating $u$ and $1$ as elements in $\displaystyle \End(\D_{(a-1,b)})$. Note that this map is a group homomorphism since $U^\circ_{(a,b)}$ is a subquotient of $U^\circ$ and hence is an abelian group. We remark that $u_{a-1}$ is actually an element of $\displaystyle \Hom(F_{a-1}, \bigoplus_{i=a}^b F'_i)$.
	
	Suppose we are given $\tilde{\pdiv}$ and $\tilde{\pdiv}'$, $\Ok_L$-valued points in $\Def_{(a-1,b)}^0$. We assume the filtration on $\D(\pdiv_{(a-1,b)})$ corresponding to $\tilde{\pdiv}$ ({\it resp.} $\tilde{\pdiv}'$) are given by elements $u$ ({\it resp.} $u'$) in $U^\circ_{(a-1,b)}(\Ok_L)$ as in Hypothesis \ref{hyp6}. 
	
	Suppose $\tilde{\pdiv}''$ is the Baer sum of $\tilde{\pdiv}$ and $\tilde{\pdiv}'$ in $\mathscr{E}^1(\pdivab^{\can},\pdiv_{a-1}^{\can})$.
	
	\begin{prop}\label{twoslopecalc}
		The filtration on $\D$ corresponding to $\lpdiv''$ is induced by the cocharacter which is the conjugate of $\mu$ by $u.u'\in U^{\circ}_{(a-1,b)}(\Ok_L)$. 
	\end{prop}
	\begin{proof}
		
		By Lemma \ref{central}, we have $U_G\subset U_{G,\nu}\subset U_{\GL,\nu}$. Hence by Lemma \ref{unipfilt}, the filtration $F$ corresponding to $\lpdiv$ is generated by $F_i$ for $a\leq i \leq b$, as well as the elements $ f_{a-1} + u_{a-1}(f_{a-1})$, for all $f_{a-1}\in F_{a-1}$. Similarly, the filtration $F'$ corresponding to $\lpdiv'$ is generated by $F_i$ for $a\leq i\leq b$, as well as the elements $f_{a-1} + u'_{a-1}(f_{a-1})$, for all elements $f_{a-1}\in F_{a-1}$. Recall that $u,u' \in U_{a-1,b}$, and these elements equal the identity when restricted to $\D_{(a,b)}$. 

		
		We apply the pullback-pushforward construction to compute the Baer sum. The pushforward of the extension  $\D_{(a-1,b)}\oplus \D_{(a-1,b)}$ by the $\Sigma:\D_{(a,b)}\rightarrow \D_{(a,b)}\oplus \D_{(a,b)}$ is given by the Honda system whose underlying $F$-crystal is $(\D_{a-1}\oplus \D_{a-1}\oplus \D_{(a,b)},\varphi_{a-1},\varphi_{a-1},\varphi_{(a,b)})$, and the filtration is generated by $F_i$ for $i\leq a\leq b$, $ f_{a-1}^{(1)} + u_{a-1}(f_{a-1})$ and $ f_{a-1}^{(2)} + u_{a-1}(f_{a-1})$ for $f_{a-1}\in F_{a-1}$. Here, the superscripts (1) and (2) denote which copy of $D_{a-1}$ the $f_{a-1}$ lie in.
		
		
		Then pulling back along the diagonal, we obtain the Honda system with underlying $F$-crystal $(\D_{a-1}\oplus \D_{(a,b)}).$ The filtration in this case is generated by the $F_i$ for $a\leq i\leq b$ as well as the $f_{a-1} + (u_{a-1} + u'_{a-1})(f_{a-1})$ for $f_{a-1}\in \D_{a-1}$. This filtration is visibly seen to be induced by the translate of $F_1\oplus F_2$ by $u.u'$, which is clearly an element of $U^{\circ}_{(a-1,b)}$. The lemma follows.
	\end{proof}
	
	\begin{cor}\label{twoslopesgroup}
		The space $\Def_{(a-1,b)}^0$ is a subgroup of $\Ext(\pdivab,\pdiv_{(a-1)})$, and the analogous statement holds for $\Def_{(a,b+1)}^0$. 
	\end{cor}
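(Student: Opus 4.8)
The plan is to deduce Corollary~\ref{twoslopesgroup} from Proposition~\ref{twoslopecalc} together with the smoothness statements of Proposition~\ref{rest}, using the principle that a closed, formally smooth formal subscheme of a smooth commutative formal group over $\Ok_L$ which is stable under the group law on $\Ok_L$-points is automatically a formal subgroup.

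Concretely, I would set $\mathcal{E}=\Ext(\pdivab^{\can},\pdiv^{\can}_{a-1})$, a smooth commutative formal group over $\Ok_L$ under Baer sum (this is the group $\mathscr{E}^1(\pdivab^{\can},\pdiv_{a-1}^{\can})$ appearing in Proposition~\ref{twoslopecalc}, occurring as one of the $\Ext$-terms of the cascade attached to $\pdiv$ as in \cite{Mo}), and $Z=\Def_{(a-1,b)}^0$, the fibre of $\Def_{(a-1,b)}\to\Defab$ over $0$. By Proposition~\ref{rest} this restriction map is smooth, so $Z$ is a closed, formally smooth formal subscheme of $\mathcal{E}$, and it contains the zero section $0\in\mathcal{E}$ because $\pdiv_{(a-1,b)}^{\can}=\prod_{i=a-1}^{b}\pdiv_i^{\can}$ is the split extension of $\pdivab^{\can}$ by $\pdiv^{\can}_{a-1}$. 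Granting that $m(Z\times Z)\subset Z$ for the Baer sum $m$ on $\mathcal{E}$, the induced map $\bar m\colon Z\times Z\to Z$ is associative and commutative with two-sided identity $0$, hence a commutative formal group law on $Z$; its (automatic) formal inverse then agrees with the restriction of the inversion of $\mathcal{E}$, so $Z$ is a formal subgroup. The same argument applies to $\Def_{(a,b+1)}^0$ once one has the analogue of Proposition~\ref{twoslopecalc} for extensions of $\pdiv^{\can}_{b+1}$ by $\pdivab^{\can}$, whose proof is identical.

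It remains to explain the reduction of $m(Z\times Z)\subset Z$ to $\Ok_L$-points, which is where the real content lies. The key observation is that if $Y$ is formally smooth over $\Ok_L=W(\Fpbar)$, so that $\mathcal{O}_Y\cong\Ok_L[|y_1,\dots,y_d|]$, then a power series vanishing at every $\Ok_L$-point of $Y$ vanishes identically: restricting along a generic $\Ok_L$-line reduces this to $d=1$, and there Weierstrass preparation shows that a nonzero power series over the discrete valuation ring $\Ok_L$ has only finitely many zeros in the maximal ideal $p\Ok_L$, which is infinite. Applying this to $Y=Z\times Z$ and to the pullback of a local equation cutting out $Z\subset\mathcal{E}$ along $m\circ(\iota\times\iota)\colon Z\times Z\to\mathcal{E}$, one obtains $m(Z\times Z)\subset Z$ as soon as $m\bigl(Z(\Ok_L)\times Z(\Ok_L)\bigr)\subset Z(\Ok_L)$.

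Finally, this last inclusion is exactly Proposition~\ref{twoslopecalc}: an $\Ok_L$-point of $Z$ is a $G_{\Ok_L}$-adapted extension whose associated filtration on $\D$ is the translate of the canonical filtration by an element $u\in U^\circ_{(a-1,b)}(\Ok_L)$, and the Baer sum of two such corresponds to the translate by $u+u'\in U^\circ_{(a-1,b)}(\Ok_L)$, hence to another $\Ok_L$-point of $Z$. I expect the main obstacle to be precisely the passage from $\Ok_L$-points back to the formal scheme, i.e. the density assertion of the third paragraph; this is the step that makes essential use of the smoothness of the deformation spaces, while everything else is formal and requires no computation beyond Proposition~\ref{twoslopecalc}.
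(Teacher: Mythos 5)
Your proposal is correct and follows essentially the same route as the paper: the paper also deduces the corollary from Proposition~\ref{twoslopecalc} by observing that $\Def_{(a-1,b)}^0$ is a formally smooth formal scheme over $\Ok_L$, so that its $\Ok_L$-points are dense and closure of $\Ok_L$-points under the Baer sum suffices. The only difference is one of detail: you spell out the density/reduction-to-$\Ok_L$-points step (vanishing of power series at all $\Ok_L$-points of a power series ring) and the formal-submonoid-implies-subgroup remark, which the paper leaves implicit.
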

	\begin{proof}
		The above computation shows that the $\Ok_L$ points of $\Def_{(a-1,b)}^0$ is a subgroup. This space is also a smooth formal scheme over $\Spec \Ok_L$ (by Hypothesis \ref{hyp3}), therefore the set of $\Ok_L$ points are dense. The result follows for $\Def_{(a-1,b)}$. 
		
		The same computation as in Proposition \ref{twoslopecalc} shows that the $\Ok_L$ points of $\Def_{(a,b+1)}^0$ is a subgroup, and the above argument applies to finish the proof of the corollary.
	\end{proof} 
	
	\subsection{}We now treat the case when $s \neq 0$. Recall that $s$ corresponds to the deformation $\pdivh_{(a,b)}$ of $\pdivab$. 
	
	\begin{prop}\label{FLcosetcalc}
		$\Def_{(a-1,b)}^s$ is the translate of a subgroup of $\Ext(\pdivh_{(a,b)},\pdiv^{\can}_{a-1}\otimes\Ok_K)$, and $\Def_{(a,b+1)}^s$ is the translate of a subgroup of $\Ext(\pdiv^{\can}_{b+1},\pdivh_{(a,b)})$.
	\end{prop}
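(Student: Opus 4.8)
The plan is to imitate the proof of Proposition \ref{twoslopecalc}, now allowing the ``bottom'' $p$-divisible group to be the nontrivial deformation $\pdivh_{(a,b)}$ instead of $\pdivab^{\can}$, and to use smoothness of the deformation space to pass from $\Ok$-points to formal schemes. I treat $\Def_{(a-1,b)}^{s}$; the case of $\Def_{(a,b+1)}^{s}$ is identical after exchanging the roles of sub-object and quotient, so that $\pdivh_{(a,b)}$ becomes the sub and $\pdiv^{\can}_{b+1}$ the quotient. By Proposition \ref{rest} the restriction $\Def_{(a-1,b)}\to\Defab$ is smooth, hence $\Def_{(a-1,b)}^{s}$ is a smooth formal scheme over $\Ok_K$: it admits a point $y_0\in\Def_{(a-1,b)}^{s}(\Ok_K)$ lifting the unique closed point, it equals the schematic closure of its generic fibre, and the assertion may be verified by translating $y_0$ to the origin of the formal group $\Ext(\pdivh_{(a,b)},\pdiv^{\can}_{a-1}\otimes\Ok_K)$ and checking that $\Def_{(a-1,b)}^{s}(\Ok_{K'})-y_0$ is a subgroup of $\Ext(\pdivh_{(a,b)},\pdiv^{\can}_{a-1}\otimes\Ok_{K'})$ for all finite $K'/K$; the required formal subgroup is then the schematic closure, whose subgroup property follows from that on $\overline{K}$-points of the (reduced) generic fibre.

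Next I would describe the coset group-theoretically. By Proposition \ref{rest}, $\Ok_{K'}$-points of $\Def_{(a-1,b)}$ correspond to elements $u\in U^{\circ}_{(a-1,b)}(\Ok_{K'})$ reducing to $1$, with filtration induced by $u\mu^{-1}$, and the map to $\Defab$ is induced by the restriction $U^{\circ}_{(a-1,b)}\to U^{\circ}_{(a,b)}$ to the subquotient $\D_{(a,b)}$; so $\Def_{(a-1,b)}^{s}(\Ok_{K'})$ is the coset, over the element attached to $s$, of $N:=\ker(U^{\circ}_{(a-1,b)}\to U^{\circ}_{(a,b)})$. Since $\lambda_{a-1}$ is the only new slope and $U^{\circ}_{(a-1,b)}$ preserves the slope filtration acting trivially on the graded pieces (Lemma \ref{central}), an element of $N$ has the form $1+\psi$ with $\psi\colon\D_{a-1}\to\D_{(a,b)}$, whence $\psi^2=0$ and $N$ is an abelian vector group (a subgroup of $\Hom(\D_{a-1},\D_{(a,b)})$). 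Thus $\Def_{(a-1,b)}^{s}$ is already a coset of $N$ as a family of Hodge filtrations; what remains is to see that this agrees with the extension-group structure.

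The core step is to show that the bijection $\Def_{(a-1,b)}^{s}(\Ok_{K'})\xrightarrow{\ \sim\ }N(\Ok_{K'})$ carrying $y_0$ to $0$ intertwines $(y_1,y_2)\mapsto y_1+y_2-y_0$ (Baer sum inside $\Ext(\pdivh_{(a,b)},\pdiv^{\can}_{a-1}\otimes\Ok_{K'})$) with addition in $N$. I would form $y_1+y_2-y_0$ by the pull-back/push-out recipe, which remains within $p$-divisible groups over $\Ok_{K'}$ by \cite[I 2.4.3]{Me}, and transport the computation to Breuil (or Kisin) modules in place of the Honda systems of Proposition \ref{twoslopecalc}, which is legitimate since $p>2$. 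Since the construction changes neither the sub-object $\pdivh_{(a,b)}$ nor the quotient $\pdiv^{\can}_{a-1}$, the $U^{\circ}_{(a,b)}$-component of the resulting Hodge filtration stays equal to the one attached to $s$, while its $N$-component -- the graph of the filtration over the $\D_{a-1}$-summand -- adds exactly as in \ref{twoslopecalc}, namely to $\psi_1+\psi_2$. Hence $y_1+y_2-y_0$ lies in $\Def_{(a-1,b)}^{s}(\Ok_{K'})$ with $N$-coordinate $\psi_1+\psi_2$, and together with the first step this yields the proposition.

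I expect the genuine difficulty to lie in this last step: upgrading the Honda-system computation of Proposition \ref{twoslopecalc} simultaneously to a possibly ramified base $\Ok_{K'}$, via Breuil/Kisin modules, and to a ``bottom'' $p$-divisible group carrying the non-canonical Hodge filtration attached to $s$. The shape of the computation is unaffected -- the $N$-direction decouples from the $U^{\circ}_{(a,b)}$-direction precisely because Baer sum fixes the sub-object and the quotient -- but making this decoupling rigorous (in particular the abelianness of $N$ and the additivity of the graph of the filtration) is where the care is needed.
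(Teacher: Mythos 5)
Your group-theoretic skeleton is sound (the fiber sits inside a coset of $N=\ker(U^{\circ}_{(a-1,b)}\to U^{\circ}_{(a,b)})$, which is an abelian vector group because $\mu$ is minuscule), and you correctly identify the crux: showing that the Baer sum in $\Ext(\pdivh_{(a,b)},\pdiv^{\can}_{a-1})$ matches addition of the unipotent parameters. But that crux is exactly what you do not prove — you defer it to an asserted Breuil--Kisin computation over a possibly ramified $\Ok_{K'}$ — and this is where the attempt has a genuine gap. Two problems compound each other. First, the dictionary you rely on, ``an $\Ok_{K'}$-point of $\Def_{(a-1,b)}$ corresponds to a filtration $u\mu^{-1}$ with $u\in U^{\circ}_{(a-1,b)}(\Ok_{K'})$'', is only established (via Grothendieck--Messing and Proposition \ref{unipfilt}/\ref{rest}) for \emph{unramified} points; over ramified $\Ok_{K'}$ the lift is controlled by the crystal evaluated on the divided-power thickening $S\to\Ok_{K'}$ (Definition \ref{Gadapted}), not simply by a translate of $F^{\can}$ inside $\D(\pdiv)\otimes\Ok_{K'}$, so even the identification of the fiber with a coset of $N(\Ok_{K'})$ is not available in the form you use it. Second, the claimed compatibility of the Baer sum with the $N$-coordinate, which over $\Ok_L$ is the content of the Honda-system/Fontaine--Laffaille computation of Proposition \ref{twoslopecalc}, becomes a substantially harder statement in the ramified Breuil--Kisin setting, and you acknowledge you have not carried it out. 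As written, the proof therefore reduces the proposition to an unproved (and harder-than-necessary) assertion.

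The paper's proof avoids both issues by a reduction you miss: exactly as in Corollary \ref{twoslopesgroup}, smoothness of the relevant formal schemes makes the $\Ok_L$-valued points dense, so the coset identity (in the form $x+y-z\in\Def^{s}$ for $x,y,z$ in the fiber) only needs to be checked on $\Ok_L$-points, with $s$ itself taken to be an $\Ok_L$-point. There the unramified Honda-system calculus applies verbatim: one writes the filtrations of $x$, $y$, $z$ as translates of $F^{\can}$ by $u,u',u''\in U^{\circ}_{G}(\Ok_L)$, notes that lying over the same $s$ forces the $(a,b)$-block entries of $u$ and $u'$ (and $u''$) to coincide, runs the pullback--pushout Baer-sum computation of Proposition \ref{twoslopecalc}, and uses that $U^{\circ}_G$ is abelian to conclude the resulting filtration is the translate by $u+u'-u''\in U^{\circ}_G(\Ok_L)$, hence $G_{\Ok_L}$-adapted and in the fiber. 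If you want to salvage your route, you should either carry out the Breuil--Kisin computation in full (including a ramified analogue of Proposition \ref{unipfilt}), or, more economically, replace your fiberwise verification over $\Ok_{K'}$ by the paper's density reduction to $\Ok_L$-points.
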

	\begin{proof}
		We will prove the result only in the case of $\Def_{(a-1,b)}^s$, as the second case follows identically. 
		The argument in Corollary \ref{twoslopesgroup} applies to reduce the proof to the case of $\Ok_L$-points, i.e. we may assume that $s$ is an $\Ok_L$-valued point, and it suffices to check that the $\Ok_L$-valued points in the fiber over $s$ form a coset. To that end, let $z$ be an $\Ok_L$ point of $\Def_{(a-1,b)}^s$, we must check for any $x,y\in\Def_{(a-1,b)}^s(\Ok_L)$, that $x+y-z\in\Def_{(a-1,b)}^s(\Ok_L)$. Let $u^x,u^y,u^z \in U_{(a-1,b)}(\Ok_L)$ denote the elements corresponding to the points $x,y,z$. The fact that $x,y,z \in \Def_{(a-1,b)}^s $ has the consequence that $u^x,u^y,u^z$ map to the same element $u^{a,b}\in U_{(a,b)}$. 
		
		
		A calculation entirely analogous to the one carried out in Proposition \ref{twoslopecalc} yields that the Honda system corresponding to the Baer sum of $x$ and $y$ (thought of as elements in $\Ext(\pdivh_{(a,b)},\pdiv^{\can}_{a-1}\otimes\Ok_K))$ has the following form: the underlying Dieudonne module is the same, and the filtration is spanned by $F_{(a,b)} \subset \D_{(a,b)}$ and $f_{a-1} +(u_{a-1}^{x} + u^y_{a-1})(f_{a-1})$ for all $f_{a-1}\in F_{a-1}$. Here, $F_{(a,b)}\subset \D_{(a,b)}$ is the filtration corresponding to the point $s$ and the element $U^{a,b}$. Note that this $p$-divisible group isn't necessarily an element of $\Def_{(a-1,b)}^s$. Indeed, it is possible that there is no element of $U_{(a-1,b)}(\Ok_L)$ that simultaneously restricts to $u^{a,b} \in U_{(a,b)}$, as well as maps to $u^x_{a-1} + u^y_{a-1} \in \Hom(F_{a-1},\D_{(a,b)})$. 
		
		However, the element of $\Def_{(a-1,b)}^s$ corresponding to $x + y - z$ has filtration equal to the span of $F_{(a,b)}\subset \D_{(a,b)}$, and $f_{a-1} + (u_{a-1}^x + u_{a-1}^y - u_{a-1}^z)(f_{a-1})$ for $f_{a-1}\in F_{a-1}$. It is easy to see that there indeed is an element of $U_{(a-1,b)}$ that restricts to $u^{a,b} \in U_{(a,b)}$, and maps to $u_{a-1}^x + u_{a-1}^y - u_{a-1}^z \in \Hom(F_{a-1},\D_{(a,b)})$, namely the element $u_x.u_y.(u_z)^{-1}$. Therefore, $x+y-z$ does indeed lie in $\Def_{a-1,b}^s$ as required.

	\end{proof}

	\section{Applications to Shimura Varieties}
	
	\subsection{}We will first give a brief summary of the definitions and constructions of Shimura varieties of Hodge-type and their integral models, mainly referring to \cite{Ki2} and \cite{Del} for details and further background. Let $G$ be a connected reductive group over $\mathbb{Q}$ which is unramified over $\mathbb{Q}_p$ - it follows that there exists a reductive group $G_{\mathbb{Z}_{(p)}}$ over $\mathbb{Z}_{(p)}$ with generic fiber $G$.
	
	Let $X$ be a conjugacy class of maps of algebraic groups over $\mathbb{R}$ 
	$$ h: \mathbb{S} = \Res_{\mathbb{C}/\mathbb{R}}\mathbb{G}_m \rightarrow G_{\mathbb{R}}, $$
	such that $(G,X)$ is a Shimura datum (see \cite[\S 2.1]{Del}).  Let $c$ be complex conjugation, then $\text{Res}_{\C/\mathbb{R}}(\C)\cong(\C\otimes_{\R}\C)^\times\cong \C^\times \times c^*(\C^\times)$ and we write $\mu_h$ for the cocharacter given by $$\C^\times\rightarrow \C^\times\times c^*(\C^\times)\xrightarrow h G(\C)$$ We let $E = E(G,X)$ be the minimal field of definition of the conjugacy class of $\mu_h$ (\cite[II, \S 4.5]{Mi}). Fix a compact open subgroup $\mathrm{K}\subset G(\A_f)$. Then by work of Shimura, Deligne, Milne, et al one can associate to this data a variety $\mathrm{Sh_K}(G,X)$ which is defined over $E$. 
	
	\subsection{} 
	We assume that there is an embedding of Shimura data
	$$ (G,X) \hookrightarrow (\GSp ,S^{\pm}). $$
	Here, $\GSp$ is the group of symplectic similitudes on a $\Q$-vector space $V$ with a perfect alternating form $\psi$, and $S^{\pm}$ is Siegel double space. For a sufficiently small compact open $\mathrm{K}'\subset \GSp(\A_f)$, the Shimura variety $\mathrm{Sh_{K'}}(\GSp,S^{\pm})$ is the moduli space of polarized abelian varieties along with some level structure depending on $\mathrm{K}'$.
	
	As in \cite[\S 1.3.3]{Ki2}, we may assume the embedding Shimura data is induced by an embedding $G_{\mathbb{Z}_{(p)}} \hookrightarrow \GSp(V_{\mathbb{Z}_{(p)}})$ for some lattice $\mathbb{Z}_{(p)}$ which can be assumed to be self dual with respect to $\psi$. Henceforth, we assume that $\mathrm{K}=\mathrm{K}_p\mathrm{K}^p$ where $\mathrm{K}_p = G_{\mathbb{Z}_{(p)}}(\mathbb{Z}_p)$ and $\mathrm{K}^p \subset G(\mathbb{A}^p_f)$ is sufficiently small. We assume $\mathrm{K}'=\mathrm{K}'_p\mathrm{K}'^p$, where $\mathrm{K}'_p$ is the stabilizer in $\GSp(\Q_p)$ of $V_{\Z_p}$ and  $\mathrm{K}'_p \subset \GSp(\A_f^p)$ is a compact open which contains $\mathrm{K}^p$, It is shown in \cite[Lemma 2.1.2]{Ki1} that for sufficiently small $\mathrm{K}'^p$ there is a closed embedding:
	$$\mathrm{Sh_K}(G,X) \hookrightarrow \mathrm{Sh_{K'}}(\GSp,S^{\pm})$$

	The variety $\mathrm{Sh_{K'}}(\GSp,S^{\pm})$ exntends to a smooth scheme $\mathscr{S}_{\mathrm{K}'}(\GSp,S^{\pm})$ over $\mathbb{Z}_{(p)}$ by extending the moduli problem to schemes over $\Z_{(p)}$. Let $\Ok_E$ be the ring of integers of $E$, and $\Ok_{E_{(p)}}$ be its localization at some prime (which we now fix) over $p$. We denote by $\mathscr{S}_{\mathrm{K}}(G,X)$ the normalization of the closure of $\mathrm{Sh_K}(G,X)$ in $\mathscr{S}_{\mathrm{K}'}(\GSp,S^{\pm}) \otimes_{\mathbb{Z}_{(p)}}\Ok_{E_{(p)}}$. Then by the main result of \cite[\S 2]{Ki1} we have that  $\mathscr{S}_{\mathrm{K}}(G,X)$ is smooth over $\Ok_{E_{(p)}}$. 
	
	\subsection{} We need a more explicit description of the formal neighbourhood of a point $x\in\mathscr{S}_{\mathrm{K}}(G,X)$. To do this, we need to introduce Hodge cycles.
	By \cite[1.3.2]{Ki1}, the subgroup $G_{\Z_{(p)}}$ is the stabilizer of a collection of tensors $s_\alpha\in V_{\Z_{(p)}}^\otimes$. Let $h:\mathcal{A}\rightarrow \mathscr{S}_{\mathrm{K}}(G,X)$ denote the pullback of the universal abelian variety on $\mathscr{S}_{\mathrm{K}'}(\GSp,S^\pm)$ and let $V_B:=R^1h_{\an,*}\Z_{(p)}$, with $h_{\an}$ is the map of complex analytic spaces associated to $h$. We also let $\mathcal{V}=R^1h_*\Omega^\bullet$ be the relative de Rham cohomology of $\mathcal{A}$. Using the de Rham isomorphism, the $s_\alpha$ give rise to a collection of Hodge cycles $s_{\alpha,dR}\in \mathcal{V}_\C^\otimes$, where $\mathcal{V}_\C$ is the complex analytic vector bundle associated to $\mathcal{V}$. By \cite[\S 2.2, 2.3]{Ki1}, these tensors are defined over $\mathcal{O}_{E,(p)}$.
	
	Similarly for a finite prime $l\neq p$, we let $\mathcal{V}_l=R^1h_{\et*}\Q_l$ and $\mathcal{V}_p=R^1h_{\eta,\et*}\Z_p$ where $h_\eta$ is the generic fibre of $h$. Using the \'etale-Betti comparison isomorphism, we obtain tensors $s_{\alpha,l}\in \mathcal{V}^\otimes_l$ and $s_{\alpha,p}\in\mathcal{V}_p^\otimes$. For $*=B, dR,l$ and $x\in \mathscr{S}_{\mathrm{K}}(G,X)(T)$, we write $\mathcal{A}_x$ for the pullback of $\mathcal{A}$ to $x$, $\pdiv_x$ for the associated $p$-divisible group and $s_{\alpha,*,x}$ for the pullback of $s_{\alpha,*}$ to $x$.

	Let $x\in\mathscr{S}_{\mathrm{K}}(G,X)(\Fpbar)$ and $\tilde{x}\in\mathscr{S}_{\mathrm{K}_p}(G,X)(\Ok_K)$ be a lift of $x$. 
	The Tate module $T_p\pdiv_{\tilde{x}}$ is equipped with Galois invariant tensors $s_{\alpha,p,\tilde{x}}\in (T_p\pdiv_{\tilde{x}})^{\otimes}$ whose stabiliser can be canonically identified with $G_{\Z_p}$.
	Let $\D(\pdiv_x)$ denote the Dieudonn\'e module of $\pdiv_x$, and let $s_{\alpha,0,x}\in(\D(\pdiv_x)\otimes_{\Z_p}\Q_p)^\otimes$ denote the image of $s_{\alpha,p,x}$ under the $p$-adic comparison isomorphism; by definition these are $\varphi$-invariant. It is shown in \cite{Ki2} that $s_{\alpha,0,x}$ are independent of the lift $\tilde{x}$. Moreover, we have $s_{\alpha,0,x}\in\D(\pdiv_x)^\otimes$ and there is an isomorphism of $\Ok_L$-modules:
	$$V_{\Z_p}^{\vee}\otimes_{\Z_p}\Ok_L\cong \D(\pdiv_x)$$
	which takes $s_{\alpha}$ to $s_{\alpha,0,x}$. In the notation of \S 3, we have $\pdiv_x$ is a $p$-divisible group with $G$-structure on $V_{\Z_p}$. We let $G_{\Ok_L}\subset GL(\D(\pdiv_x))$ denote the stabilizer of the $s_{\alpha,0,x}$. Then fixing an isomorphism $V_{\Z_p}^{\vee}\otimes_{\Z_p}\Ok_L\cong \D(\pdiv_x)$ gives a identification $G_{\Ok_L}\cong G\otimes_{\Z_p}\Ok_L$. Recall we may associate to this data the subspace $\Spf R_G$ of the universal deformation space of $\pdiv_x$ which cuts out the $G_{\Ok_L}$-adapted deformations.

	Let $\widehat{U}_x$ denote the formal neighbourhood of $\mathscr{S}_{\mathrm{K}}(G,X)$ along $x$. The following result of Kisin describes a formal neighbourhood of the Shimura variety at $x$.
	
	\begin{prop}\cite[Proposition 1.1.10]{Ki2}
		We have an identification $$\Spf R_G\cong \widehat{U}_x$$ which takes the universal $p$-divisible group on $\widehat{U}_x$ to the one $\Spf R_G$.
	\end{prop}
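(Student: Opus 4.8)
The statement is attributed above to \cite{Ki2}, so I will only sketch how the argument runs; its two pillars are Serre--Tate theory and integral $p$-adic Hodge theory. Let $\Spf R$ denote the universal deformation space of the $p$-divisible group $\pdiv_x = \mathcal{A}_x[p^\infty]$. By Serre--Tate theory the formal completion of the Siegel integral model $\mathscr{S}_{\mathrm{K}'}(\GSp,S^{\pm})$ at the image of $x$ is identified with $\Spf R$, compatibly with universal $p$-divisible groups; here one uses that the polarization deforms along with $\pdiv_x$ and that the prime-to-$p$ level structure is rigid. Since $\mathscr{S}_{\mathrm{K}}(G,X)$ is the normalization of the closure of $\mathrm{Sh_K}(G,X)$ inside $\mathscr{S}_{\mathrm{K}'}(\GSp,S^{\pm})\otimes_{\Z_{(p)}}\Ok_{E_{(p)}}$, the closed embedding of generic fibres induces, after base change to $\Ok_L$, a closed immersion $\widehat{U}_x\hookrightarrow\Spf R$. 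Moreover $\widehat{U}_x$ is formally smooth over $\Ok_L$ by the smoothness of $\mathscr{S}_{\mathrm{K}}(G,X)$ over $\Ok_{E_{(p)}}$ (\cite[\S2]{Ki1}), so $\widehat{U}_x\cong\Spf\Ok_L[|t_1,\dots,t_d|]$ with $d=\dim_\C X$; and $\Spf R_G$ is formally smooth over $\Ok_L$ of relative dimension $\dim U^\circ_G$, which equals $\dim_\C X = d$ since $U^\circ_G$ is the unipotent radical opposite to the parabolic attached to $\mu$ (\cite[\S1]{Ki1}).

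The plan is then to prove the inclusion $\widehat{U}_x\subseteq\Spf R_G$ inside $\Spf R$. Granting this, the induced surjection $R_G\twoheadrightarrow \Ok_L[|t_1,\dots,t_d|]$ of formally smooth $\Ok_L$-algebras of the same relative dimension $d$ is forced to be an isomorphism (its kernel is a prime ideal in a domain whose quotient has the same Krull dimension, hence is the zero ideal), which is the desired identification $\Spf R_G\cong\widehat{U}_x$; the compatibility of universal $p$-divisible groups is then automatic, both being inherited from the Siegel family via Serre--Tate. To prove the inclusion, observe that the intersection of the kernels of all continuous $\Ok_L$-algebra homomorphisms $\Ok_L[|t_1,\dots,t_d|]\to\Ok_K$, over all finite extensions $K/L$, is zero (a nonzero power series is nonzero at some such point); hence it suffices to check that every $\Ok_K$-point of $\widehat{U}_x$ — equivalently, every lift $\tilde{x}\in\mathscr{S}_{\mathrm{K}}(G,X)(\Ok_K)$ of $x$ — factors through $\Spf R_G$, i.e.\ that $\pdiv_{\tilde{x}}$ is a $G_{\Ok_L}$-adapted lift in the sense of Definition \ref{Gadapted}.

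This last step is the heart of the matter, and is where integral $p$-adic Hodge theory is used. Given such an $\tilde{x}$, normality of $\Ok_K$ together with the construction of $\mathscr{S}_{\mathrm{K}}(G,X)$ by normalization forces its generic fibre into $\mathrm{Sh_K}(G,X)$, so the $p$-adic Tate module $T_p\pdiv_{\tilde{x}}$ carries Galois-invariant tensors $s_{\alpha,p,\tilde{x}}$ with stabilizer $G_{\Z_p}$. Using Kisin's theory of $\mathfrak{S}$-modules and the associated integral crystalline comparison isomorphism (as in \cite[\S1.3--1.4]{Ki2}), one transports these to Frobenius-invariant tensors $\tilde{s}_\alpha\in\D(\pdiv_{\tilde{x}})(S)^{\otimes}$ lifting $s_{\alpha,0,x}$, whose stabilizer is a reductive group $G_S$ and whose images in $\D(\pdiv_{\tilde{x}})(\Ok_K)^{\otimes}$ lie in $\Fil^0$ — which is exactly the assertion that $\tilde{x}$ is $G_{\Ok_L}$-adapted. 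I expect this $p$-adic Hodge-theoretic input to be the only genuine obstacle; the Serre--Tate identification, the density of integral points, and the dimension bookkeeping are all routine once it is in hand.
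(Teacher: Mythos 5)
The paper does not prove this statement at all: it is quoted verbatim from Kisin, and your sketch is in effect a reconstruction of Kisin's own argument. You do isolate the genuine heart correctly --- namely that every $\Ok_K$-point of $\widehat{U}_x$ has generic fibre in $\mathrm{Sh_K}(G,X)$, hence carries \'etale tensors, and that Kisin's integral $p$-adic Hodge theory transports these to Frobenius-invariant tensors in $\D(\pdiv_{\tilde{x}})(S)^{\otimes}$ making the lift $G_{\Ok_L}$-adapted --- and the density-of-$\Ok_K$-points reduction is sound for a reduced, $\Ok_L$-flat complete local ring.

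However, two steps as you state them do not hold up. First, the claimed closed immersion $\widehat{U}_x\hookrightarrow \Spf R$ is unjustified: $\mathscr{S}_{\mathrm{K}}(G,X)$ is the \emph{normalization} of the closure of $\mathrm{Sh_K}(G,X)$ in the Siegel model, so the morphism to the Siegel model is only finite, and on completed local rings a finite (e.g.\ normalization) map need not be surjective --- compare $k[[x,y]]/(y^2-x^3)\to k[[t]]$. Your $\Ok_K$-point argument therefore only produces a finite map $R_G\to \widehat{\mathcal{O}}_{\widehat{U}_x}$, not the surjection on which your ``same dimension, hence isomorphism'' step relies; closing this gap requires a birationality/normality argument, and this is exactly where Kisin's actual proof does real work (he lifts $\Spf R_G$ into $\widehat{U}_x$ through the normalization, using that $R_G$ is formally smooth, hence normal). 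Second, your use of the smoothness of $\mathscr{S}_{\mathrm{K}}(G,X)$ over $\Ok_{E_{(p)}}$ is circular relative to the source being cited: in Kisin's work that smoothness is \emph{deduced} from the identification $\widehat{U}_x\cong\Spf R_G$, so it cannot serve as an independent input to a proof of it (in the present paper both facts are simply quoted). A smaller inaccuracy: by Serre--Tate the completion of the Siegel model at the image of $x$ is the deformation space of the \emph{polarized} $p$-divisible group, a proper closed formal subscheme of $\Spf R$; the polarization does not ``deform along with $\pdiv_x$'' for an arbitrary deformation. This last point is cosmetic, but the first two are genuine gaps in the argument as written.
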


	\subsection{}We now fix a point $x \in \mathscr{S}_{\mathrm{K}}(G,X)(\Fpbar)$, and an identification of $\D(\pdiv_x)$ with $V_{\Z_p}^{\vee}\otimes_{\Z_p}\Ok_L$ respecting tensors. Let, we have $\varphi=b\sigma$ for some $b\in G(L)$. Recall that $J_b$ is the algebraic group over $\mathbb{Q}_p$ whose points in a $\mathbb{Q}_p$-algebra $R$ are given by 
	$$J_b(R) = \{g \in G(R \otimes_{\mathbb{Q}_p} L) : b\sigma(g) = gb\}$$
	
	Let $\Aut_{\mathbb{Q}}(\mathcal{A}_x)$ be the algebraic group defined over $\mathbb{Q}$ whose points in a $\mathbb{Q}$-algebra $R$ are given by 
	$$\Aut_{\mathbb{Q}}(\mathcal{A}_x)(R) = (\End_{\mathbb{Q}}(\mathcal{A}_x) \otimes R) ^{\times} $$
	Let $I_x \subset \Aut_{\mathbb{Q}}(\mathcal{A}_x)$ denote the closed subgroup consisting of those points which fix the tensors $s_{\alpha ,0,x}$ and $s_{\alpha,l,x}$ for all primes $l \neq p$. We have that $I_x \otimes \Q_p$ is (non-canonically) a subgroup of $J_b$. 
	
	Suppose $x$ is defined $\F_{p^n}$. Let $\gamma\in I_x(\Q)$ denote the $p^n$-Frobenius, then $\gamma$ maps to $\gamma_p:=b\sigma(b)\hdots \sigma^{n-1}(b)\in J_b(\Q_p)$.
	
	Fix a maximal torus $T\subset G$ defined over $\Z_p$, and $B\supset T$ a Borel subgroup. Let $\mu\in X_*(T)$ denote the dominant representative of the conjugacy class of $\mu_h^{-1}$ in $X_*(T)$. It is shown in \cite[Lemma 1.1.12]{Ki2} that $$b\in G(\Ok_L)\sigma(\mu(p))G(\Ok_L)$$
	
	By our assumptions $1\in X^G_{\sigma(\mu)}(b)$ so that $\sigma^{-1}(b)\in X^G_\mu(b)$, hence we have $\kappa_G(b)=[\mu]$ and $\overline{\nu}_b\leq\mu$.
	
	\begin{definition}A point $x\in \mathscr{S}_{\mathrm{K}}(G,X)(\overline{\mathbb{F}}_p)$ lies in the $\mu$-ordinary locus if the associated $\pdiv_x$ with $G$ structure is $\mu$-ordinary.
	\end{definition}
	We have the following results about the $\mu$-ordinary locus of $\mathscr{S}_{\mathrm{K}}(G,X)$. 
	
	\begin{prop}[Wedhorn\footnote{in the PEL type case}, Wortman] \label{Wort}
		The $\mu$-ordinary locus is an open dense subscheme of the special fiber of $\mathscr{S}_{\mathrm{K}}(G,X)$.
	\end{prop}
	\begin{proof}
		This is Theorem 1.1 of \cite{Wo}. 
	\end{proof}
	
	\begin{prop}\label{groupsisom}
		Let $x$ and $y$ be $\mu$-ordinary points in $\mathscr{S}_{K}(G,X)(\Fpbar)$. Then $\pdiv_x$ and $\pdiv_y$ are isomorphic $p$-divisible groups. 
	\end{prop}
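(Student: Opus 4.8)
The plan is to reduce the claim to the corresponding statement about $F$-crystals with $G$-structure, which has already been established. The key point is that for a $\mu$-ordinary point $x$, the isomorphism class of $\pdiv_x$ (as a bare $p$-divisible group, forgetting the tensors) is determined by the isomorphism class of its Dieudonn\'e module as an $F$-crystal, and by Corollary \ref{cortwo} the $F$-crystal with $G$-structure attached to $\pdiv_x$ is isomorphic to the standard one $\varphi = \upsilon(p)\sigma$; in particular the underlying $F$-crystal depends only on $\mu$ (equivalently, on the Shimura datum $(G,X)$), not on the point $x$.

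Concretely, first I would recall from \S 5.4 that for any $x \in \mathscr{S}_{\mathrm{K}}(G,X)(\Fpbar)$ we have a fixed identification $\D(\pdiv_x) \cong V_{\Z_p}^\vee \otimes_{\Z_p}\Ok_L$ carrying $s_\alpha$ to $s_{\alpha,0,x}$, under which the Frobenius becomes $b_x\sigma$ for some $b_x \in G(L)$ with $b_x \in G(\Ok_L)\sigma(\mu(p))G(\Ok_L)$. The cocharacter $\mu$ here is the same for all points, being the dominant representative of the conjugacy class of $\mu_h^{-1}$, which only depends on $(G,X)$. Next, since $x$ is $\mu$-ordinary, Definition \ref{ordinary} says $b_x$ is $\mu$-ordinary, so by Corollary \ref{cortwo} (applied via Proposition \ref{cats}) the $F$-crystal with $G$-structure $\varphi_x = b_x\sigma$ on $V_{\Z_p}^\vee$ is isomorphic to $\varphi = \upsilon(p)\sigma$, where $\upsilon = \sigma(\mu)$. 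The same applies to $y$, so the $F$-crystals with $G$-structure attached to $\pdiv_x$ and $\pdiv_y$ are both isomorphic to $\varphi = \upsilon(p)\sigma$, hence isomorphic to each other; in particular the underlying $F$-crystals (i.e. the contravariant Dieudonn\'e modules with Frobenius) are isomorphic.

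Finally, I would invoke the Dieudonn\'e equivalence over the perfect field $\Fpbar$: the contravariant Dieudonn\'e functor $\pdiv \mapsto \D(\pdiv)$ is an anti-equivalence between $p$-divisible groups over $\Fpbar$ and Dieudonn\'e modules (finite free $\Ok_L$-modules with Frobenius $\varphi$ and Verschiebung, with $p\D \subset \varphi(\D) \subset \D$). Since the $F$-crystals underlying $\D(\pdiv_x)$ and $\D(\pdiv_y)$ are isomorphic as $\varphi$-modules and both arise from $p$-divisible groups (so the Verschiebung / the inclusions $p\D \subset \varphi\D \subset \D$ are automatic and preserved), this isomorphism is an isomorphism of Dieudonn\'e modules, and therefore $\pdiv_x \cong \pdiv_y$.

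The main obstacle, such as it is, is purely bookkeeping: one must be careful that the isomorphism of $F$-crystals with $G$-structure produced by Corollary \ref{cortwo} is genuinely an $\Ok_L$-linear isomorphism of the lattices $V_{\Z_p}^\vee \otimes \Ok_L$ (not merely an isomorphism after inverting $p$), so that it descends to an isomorphism of the integral Dieudonn\'e modules and hence of the $p$-divisible groups; this is exactly what Proposition \ref{cats} guarantees, since the $\sigma$-conjugating element lies in $G(\Ok_L)$. No essentially new input beyond \S 2 is needed.
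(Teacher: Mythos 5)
Your proposal is correct and is essentially the paper's own argument: the paper simply invokes Corollary \ref{cortwo} to conclude that the underlying $F$-crystals $\D(\pdiv_x)$ and $\D(\pdiv_y)$ are isomorphic and then appeals to Dieudonn\'e theory over $\Fpbar$. Your additional remark that the isomorphism is integral because the $\sigma$-conjugating element of Proposition \ref{cats} lies in $G(\Ok_L)$ is exactly the point that makes the final step work.
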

	\begin{proof}
		By Corollary \ref{cortwo}, the $F$-crystals $\D(\pdiv_x)$ and $\D(\pdiv_y)$ are isomorphic. The result follows. 
	\end{proof}
	
	\subsection*{The Serre--Tate canonical lift}
	We now prove that every point in the $\mu$-ordinary locus of the Shimura variety has a canonical CM lift. 
	
	\begin{thm}\label{canlift}
		Let $x\in\mathscr{S}_{\mathrm{K}_p}(G,X)(\Fpbar)$ lie in the $\mu$-ordinary locus. Then there exists a unique lift $\tilde{x}\in\mathscr{S}_{\mathrm{K}_p}(G,X)(\Ok_L)$ such that the action of $I_x(\Q_p)$ on $\mathcal{A}_x$ lifts (in the isogeny category) to $\mathcal{A}_{\tilde{x}}$. Moreover $\tilde{x}$ is a special point of $\mathscr{S}_{\mathrm{K}_p}(G,X)(L)$.
	\end{thm}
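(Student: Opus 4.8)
The plan is to produce $\tilde{x}$ by transporting the canonical lift $\pdiv^{\can}$ of Theorem~\ref{canliftcrystal} through Kisin's identification $\Spf R_G\cong\widehat{U}_x$, and then to prove specialness by a Mumford--Tate argument on the generic fibre. Since $\mathscr{S}_{\mathrm{K}_p}(G,X)$ is locally of finite type over $\Ok_{E_{(p)}}$, an $\Ok_L$-point lifting $x$ is the same datum as a morphism $\Spf\Ok_L\to\widehat{U}_x$, hence by \cite[Proposition~1.1.10]{Ki2} the same as a $G_{\Ok_L}$-adapted deformation of $\pdiv_x$ to $\Ok_L$, compatibly with the $p$-divisible groups carried by the two sides. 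Accordingly I would let $\tilde{x}$ be the point corresponding to the canonical lift $\pdiv^{\can}$, so that $\pdiv_{\tilde{x}}\cong\pdiv^{\can}$ as $p$-divisible groups with $G$-structure.

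For the existence of the lift of the $I_x(\Q_p)$-action, note that $I_x\otimes\Q_p\hookrightarrow J_b$ carries $I_x(\Q_p)$ into $J_b(\Q_p)$, which acts on $\pdiv_x$ in the isogeny category fixing the tensors $s_{\alpha,0,x}$; by Theorem~\ref{canliftcrystal} this action lifts to $\pdiv^{\can}$. By Serre--Tate theory an isogeny of $\mathcal{A}_x$ lifts to $\mathcal{A}_{\tilde{x}}$ exactly when its $p$-divisible part lifts, the prime-to-$p$ part lifting automatically; hence the $I_x(\Q_p)$-action on $\mathcal{A}_x$ lifts to $\mathcal{A}_{\tilde{x}}$. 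For uniqueness, suppose $\tilde{y}$ is another $\Ok_L$-lift of $x$ to which $I_x(\Q_p)$ lifts; then $\tilde{y}$ corresponds to some $G_{\Ok_L}$-adapted deformation of $\pdiv_x$, and the $p^n$-Frobenius $\gamma\in I_x(\Q)\subseteq I_x(\Q_p)$ acts on $\pdiv_{\tilde{y}}$. After replacing $n$ by a suitable multiple so that $n\overline{\mu}$ is a genuine cocharacter, $\gamma$ maps to $\gamma_p=b\sigma(b)\cdots\sigma^{n-1}(b)=n\nu_b(p)$, using that $\nu_b=\overline{\mu}$ is $\sigma$-fixed. The strengthened uniqueness inside the proof of Theorem~\ref{canliftcrystal} --- the canonical lift is the unique $G_{\Ok_L}$-adapted lift to which the action of $\gamma_p$ extends --- then gives $\pdiv_{\tilde{y}}\cong\pdiv^{\can}\cong\pdiv_{\tilde{x}}$, so $\tilde{y}=\tilde{x}$ because $\Spf R_G\cong\widehat{U}_x$ is an isomorphism of formal schemes.

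It remains to show that $\tilde{x}$ is special, i.e.\ that the generic fibre $\tilde{x}_\eta\in\mathrm{Sh}_{\mathrm{K}_p}(G,X)(L)$ is a special point. Running the argument of the previous paragraph for the whole group $I_x$ (the $p$-part of each element lies in $J_b(\Q_p)$ and lifts to $\pdiv^{\can}$, and the prime-to-$p$ part lifts automatically), $I_x$ acts on $\mathcal{A}_{\tilde{x}}$ in the isogeny category; since the tensors are preserved throughout Kisin's construction, this action preserves the Hodge cycles $s_{\alpha,dR,\tilde{x}_\eta}$ and $s_{\alpha,l,\tilde{x}_\eta}$ for $l\neq p$. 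Endomorphisms commute with the Hodge structure, so $I_x$ centralises the Mumford--Tate group and $\mathrm{MT}(\tilde{x}_\eta)\subseteq Z_G(I_x)$. At this point I would use that $I_x$ contains a maximal torus $T'$ of $G$: for $\mu$-ordinary $x$, the group $I_x\otimes\Q_p$ lies in $J_b$, which is an inner form of the Levi $M_b=Z_G(\nu_b)$ and hence of full rank, and for such points $I_x\otimes\Q_p$ exhausts $J_b$ (alternatively, one exhibits a special point of $\mathscr{S}_{\mathrm{K}_p}(G,X)$ reducing into the $\mu$-ordinary locus, whose CM torus transfers into $I_x$). Granting this, $Z_G(I_x)\subseteq Z_G(T')=T'$, so $\mathrm{MT}(\tilde{x}_\eta)$ is a torus, which is exactly the assertion that $\tilde{x}_\eta$ is a special point.

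The formal steps --- transporting $\pdiv^{\can}$, lifting the $I_x$-action, and deducing uniqueness --- are immediate from Theorem~\ref{canliftcrystal}, Kisin's description of $\widehat{U}_x$, and Serre--Tate theory. The main obstacle is the final step: controlling $I_x$ for $\mu$-ordinary $x$ precisely enough to conclude that $Z_G(I_x)$ is a torus (equivalently, that $I_x$ has full rank, or that $I_x\otimes\Q_p\cong J_b$). Establishing this --- via the structure of $J_b$ together with a Kottwitz-type invariant computation, or via producing a CM point in the isogeny class of $x$ --- is the one genuinely arithmetic input, and is where I expect the difficulty to lie.
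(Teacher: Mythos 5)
Your route is the same as the paper's: you transport the canonical lift of Theorem~\ref{canliftcrystal} through Kisin's identification $\Spf R_G\cong\widehat{U}_x$, lift the action via Serre--Tate (deformations of $\mathcal{A}_x$ being deformations of $\pdiv_x$), prove uniqueness by acting with the Frobenius element $\gamma\mapsto\gamma_p=n\nu_b(p)$ and invoking the strengthened uniqueness inside the proof of Theorem~\ref{canliftcrystal}, and prove specialness by showing the Mumford--Tate group of $\mathcal{A}_{\tilde{x}}$ commutes with $I_x$ and is therefore contained in a maximal torus of $G$. All of that matches the paper's argument step for step.

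The one place you leave open --- that $I_x$ has the same rank as $G$, so that $Z_G(I_x)$ is contained in a maximal torus --- is exactly the point where the paper does not argue from scratch either: it quotes \cite[Lemma~2.1.7]{Ki2}, which gives $\mathrm{rank}\,I_x=\mathrm{rank}\,G$ for points of these integral models. So your acknowledged obstacle is genuine but is discharged by a citation rather than by a new argument, and your correctly-hedged uncertainty there is the only substantive difference from the paper. Two cautions on your sketched remedies: the assertion that $I_x\otimes\Q_p$ \emph{exhausts} $J_b$ at $\mu$-ordinary points is considerably stronger than what is needed (only the rank statement is used) and is itself a deep input of Langlands--Rapoport type, so you should not rest the proof on it; and note that $J_b$ being an inner form of the full-rank Levi $M_b$ says nothing about $I_x$ until you know $I_x\otimes\Q_p$ is large inside $J_b$, which is precisely the content you would be assuming. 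With the rank statement imported from \cite{Ki2}, your proof is complete and agrees with the paper's.
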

	\begin{proof}
		The lift constructed in Theorem \ref{canliftcrystal} is $G_{\Ok_L}$-adapted, and therefore corresponds to a point $\tilde{x}$, of $\mathscr{S}_{\mathrm{K}_p}(G,X)(\Ok_L)$. We prove that $\tilde{x}$ satisfies the required properties. 
		
		By Theorem \ref{canliftcrystal}, the action of $J_b(\Q_p)$ lifts to $\pdiv_{\tilde{x}}$. Since $I_x(\mathbb{Q}) \subset J_b(\mathbb{Q}_p)$, it follows by the theorem of Serre and Tate that the action of $I_x(\mathbb{Q})$ lifts (in the isogeny category) to $\mathcal{A}_{\tilde{x}}$.
		
		We now show that $\tilde{x}$ is actually a special point. Since $I_x$ fixes the tensors $s_{\alpha, 0,x}$, it also fixes $s_{\text{\'et},p,\tilde{x}}$ and hence $s_{\alpha,B,\tilde{x}}$. Therefore, $I_x$ is naturally a subgroup of $G$. The Mumford--Tate group of $\mathcal{A}_{\tilde{x}}$ is a subgroup of $G$ which commutes with $I_x$. By \cite[Lemma 2.1.7]{Ki2}, $I_x$ has the same rank as $G$, hence the Mumford--Tate group of $\mathcal{A}_{\tilde{x}}$ is contained inside a maximal torus of $G$, so that $\tilde{x}$ is a special point.
		
		For the uniqueness, we assume that $x$ is defined over $\F_{p^n}$. Suppose $\tilde{x}'$ is another such $\Ok_L$ point, then $I_x(\Q)$ acts on $\mathcal{A}_{\tilde{x}'}$, and hence $\gamma$ does. Since $\gamma$ maps to $\gamma_p=n\nu(p)\in J_b(\Q_p)$, we have $\gamma_p$ acts $\pdiv_{\tilde{x}'}$, thus by the proof of Proposition \ref{canliftcrystal}, we have $\pdiv_{\tilde{x}'}=\pdiv_{\tilde{x}}$.
	\end{proof}
	
	\begin{definition}Let $x\in \mathscr{S}_{K_p}(G,X)(\Fpbar)$ be a point in the $\mu$-ordinary locus. The abelian variety $\mathcal{A}_{\tilde{x}}$ constructed in the previous theorem is the canonical lift of $\mathcal{A}_x$.
	\end{definition}
	Note that we have now proved the first two assertions of Theorem \ref{main}.  
	
	\subsection*{Shifted subcascades and density of CM lifts}
	The concept of a torsion point makes sense for cascades, and is defined inductively. We recall the definition from \cite[\S 2.2.4]{Mo}.
	For $\Gamma$ an $r$ cascade, we say that $x \in \Gamma_{(1,r)}(S)$ is a torsion point if:
	\begin{enumerate}
		\item $\lambda_{(1,r)}(x)$ and $\rho_{1,r}(x)$ are torsion points in $\Gamma_{(1,r-1)}(S)$ and $\Gamma_{(2,r)}(S)$. 
		\item $x$ is a torsion point of $\Gamma_{(1,r)}$ when viewed as a group over either $\Gamma_{(1,r-1)}$ or $\Gamma_{(2,r)}$.
	\end{enumerate}
	Note that it suffices to check that $\lambda(x)$ is a torsion point in $\Gamma_{(1,r-1)}(S)$, and that $x$ is a torsion point of $\Gamma_{(1,r)}$ when viewed as a group over $\Gamma_{(1,r-1)}$. This follows from the definition of a bi-extension when $r = 3$, and the general case follows by induction on $r$ (indeed, the proof of \cite[Proposition 2.3.12, (ii)]{Mo} uses this claim implicitly).

	\begin{remark}\label{torsionCM}
		When $\Gamma = \Gamma(\pdiv)$, then just as in \cite[Proposition 2.3.12, (ii)]{Mo}, if an $S$ point of $\Gamma_{(1,r)}$ is a torsion point, the corresponding $p$-divisible group is isogenous to $\pdiv^{\can}$. 
	\end{remark}
	We now turn to the final assertion of Theorem \ref{main}. We recall notation from previous sections. Let $x\in \mathscr{S}_{K_p}(G,X)(\Fpbar)$ be a point in the $\mu$-ordinary locus, and let $\pdiv = \pdiv_x$ be the associated $p$-divisible group. Let $\pdiv = \displaystyle{\prod_{i=1}^r \pdiv_i}$ according to its slope decomposition. Recall that each $\pdiv_i$ is isoclinic of slope $\lambda_i$ with  $\lambda_1 > \lambda_2 \hdots > \lambda_r$ and we had defined $\pdiv_{(a,b)} = \displaystyle{\prod_{i = a}^b \pdiv_i}$. The slope decomposition lifts to $\pdiv^{\can}$, the canonical lift of $\pdiv$ and we had defined $\pdiv_{?}^{\can}$ to be the canonical lifts of $\pdiv_{?}$ where $?$ stands for $i$, or $(a,r)$. Let $\Def(\pdiv)$ denote the deformation space over $\Spec \Ok_L$ of $\pdiv$, and let $\Def_G$ denote the space of $G$-adapted deformations -- by Theorem \ref{cmhyp} $\Def_G$ is a shifted subcascade. Recall that $\Defab$ is the subspace of the deformation space of $\pdivab$ described in \S 4.
	
	\begin{thm}\label{torsionpointsdense}
	The space $\Def_G$ contains a dense set of torsion points. Consequently, the final assertion of Theorem \ref{main} is true. 
	\end{thm}
	\begin{proof}
    By remark \ref{torsionCM}, deformations corresponding to torsion points are isogenous to the canonical lift, and are therefore CM points. This reduces the final assertion of Theorem \ref{main} to the density of torsion points in $\Def_G$. 
	We prove the density over the following steps: 
	\begin{enumerate}
	    
	   \item $\Defab$ has a dense set of torsion points if $b = a+1$.
	   
	   \item The fibers of $\Def_{(a-1,b)}$ and $\Def_{(a,b+1)}$ over $0\in \Defab$ have a dense set of torsion points.

	    \item The fibers of $\Def_{(a-1,b)}$ and $\Def_{(a,b+1)}$ over a torsion point of $\Defab$ have a dense set of torsion points.
	\end{enumerate}
	Inductively applying these steps yields the required result. We note that the second step is implied by the third step. However, we first prove the statement in the case when the torsion point is trivial, before turning to the general case. 
	We also note that $\Def_{(a,a+1)}$ and the fibers of $\Def_{(a-1,b)}$ and $\Def_{(a,b+1)}$ over $0 \in \Defab$ are all canonically formal groups over $\Ok_L$. In order to prove that these formal groups have a dense set of torsion points, it suffices to prove that these groups are all $p$-divisible formal groups. This is proved in Propositions \ref{cho} and \ref{tpdivfiber}. The final step is proved in Proposition \ref{finall}.
	
	\end{proof}
	
	\begin{prop}\label{cho}
		The formal group $\Def_{(a,a+1)}$ is $p$-divisible, and hence has a dense set of torsion points.
	\end{prop}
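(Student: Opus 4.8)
The claim has two halves: (i) $\Def_{(a,a+1)}$ is a $p$-divisible formal group, and (ii) a $p$-divisible formal group over $\Ok_L$ has a dense set of torsion points. Part (ii) is standard: over $\Ok_L = W(\Fpbar)$ a $p$-divisible formal group has $p^\infty$-torsion which is Zariski-dense in the formal scheme (the torsion points exhaust the closed points and are already dense at every finite level because multiplication by $p$ is finite flat of degree a power of $p$), so I will only spend a line on it. The substance is part (i).

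For (i), I would first invoke the structure already in place: by Theorem~\ref{cmhyp}, $\Def_G$ — and hence by Proposition~\ref{in} each $\Def_{(a,b)}$ — is a shifted subcascade, so in the two-slope situation $\Def_{(a,a+1)}$ is a formal subgroup of $E_{(a,a+1)} = \Ext(\pdiv^{\can}_{a+1},\pdiv^{\can}_a)$. Concretely, by Proposition~\ref{rest} its $\Ok_L$-points are the elements $u \in U^\circ_{(a,a+1)}(\Ok_L)$ reducing to the identity mod $p$, where $U^\circ_{(a,a+1)}$ is a unipotent group stable under conjugation by (the image of) the Newton cocharacter $\nu$, and $\D_{(a,a+1)} = \D(\pdiv_a) \oplus \D(\pdiv_{a+1})$ with $\nu$ acting by the scalars $-\lambda_a < -\lambda_{a+1}$ on the two summands (contravariant conventions). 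The key point is that $U^\circ_{(a,a+1)}$, being the opposite unipotent of $\mu$ inside this two-step graded situation, lies inside $\mathrm{Hom}(\D(\pdiv_{a+1}),\D(\pdiv_a))$ viewed additively — i.e. it is a vector group — and the cocharacter $\nu$ acts on its Lie algebra with the single positive weight $\lambda_a - \lambda_{a+1} > 0$.

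The heart of the argument is then to produce a nontrivial quasi-isogeny of $\Def_{(a,a+1)}$ that acts on this weight space by multiplication by a positive power of $p$, which forces $p$-divisibility. This is exactly where the Frobenius element $\gamma_p$ enters. Pick $n$ with $n\lambda_i \in \Z$ for all $i$; then $\gamma_p = n\nu(p) \in J_b(\Q_p)$ acts on $\D(\pdiv_a)$ by $p^{-n\lambda_a}$ and on $\D(\pdiv_{a+1})$ by $p^{-n\lambda_{a+1}}$, hence by conjugation it acts on $\mathrm{Hom}(\D(\pdiv_{a+1}),\D(\pdiv_a))$ — and in particular on the vector group $U^\circ_{(a,a+1)}$ — by multiplication by $p^{n(\lambda_a-\lambda_{a+1})}$, a genuine positive power of $p$. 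Since $\gamma_p$ is induced by an actual (quasi-)endomorphism of the $p$-divisible group $\pdiv^{\can}_{(a,a+1)}$, it induces an endomorphism of the formal group $E_{(a,a+1)} = \Ext(\pdiv^{\can}_{a+1},\pdiv^{\can}_a)$ that restricts to the subgroup $\Def_{(a,a+1)}$ (this uses that $\gamma_p$ preserves $G_{\Ok_L}$-adaptedness, equivalently that $U^\circ_{(a,a+1)}$ is $\nu$-stable, Proposition~\ref{rest}). On the formal group $\Def_{(a,a+1)}$ this endomorphism is, on Lie algebras, multiplication by $p^{n(\lambda_a-\lambda_{a+1})}$; since $\Def_{(a,a+1)}$ is a smooth commutative formal group over $\Ok_L$ and it admits an endomorphism equal to $[p^m]$ up to a unit on the tangent space for some $m \ge 1$, its multiplication-by-$p$ map is (faithfully flat, hence) an isogeny, i.e. $\Def_{(a,a+1)}$ is $p$-divisible.

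**Main obstacle.** The delicate step is justifying that $\gamma_p$ really does induce an honest endomorphism of the formal \emph{group} $\Def_{(a,a+1)}$ that is compatible with the cascade/Ext-group structure, and computing its effect on the tangent space precisely — rather than merely knowing $\gamma_p$ acts on the set of lifts. I would handle this by working with the explicit description of $\Def_{(a,a+1)}$ as a subgroup of $\Ext(\pdiv^{\can}_{a+1},\pdiv^{\can}_a)$: an element $\gamma_p \in J_b(\Q_p)$ gives a quasi-isogeny of $\pdiv^{\can}_{(a,a+1)}$ compatible with the slope filtration, and functoriality of $\Ext(-,-)$ in both variables turns this into multiplication by the ratio of the scalars by which $\gamma_p$ acts on the two isoclinic factors, which is precisely $p^{n(\lambda_a-\lambda_{a+1})}$; that this quasi-isogeny preserves the subgroup cut out by $G_{\Ok_L}$-adaptedness is exactly the $\nu$-stability of $U^\circ_{(a,a+1)}$ recorded in Proposition~\ref{rest}. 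Once this is set up the $p$-divisibility, and then the density of torsion points, are formal.
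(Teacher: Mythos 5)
There is a genuine gap at the decisive step of your argument. The endomorphism of $\Ext(\pdiv^{\can}_{a+1},\pdiv^{\can}_a)$ that you extract from $\gamma_p=n\nu(p)$ by functoriality carries no information: since $\gamma_p$ acts on each isoclinic factor $\pdiv^{\can}_i$ (in the isogeny category) by a power of $p$, its induced action on the extension group is literally multiplication by $p^{n(\lambda_a-\lambda_{a+1})}$, i.e.\ a power of the map $[p]$ which every commutative formal group already has; in particular it preserves \emph{every} formal subgroup, so the $\nu$-stability of $U^{\circ}_{(a,a+1)}$ from Proposition \ref{rest} is not actually being used, and nothing is forced. Worse, the criterion you then invoke --- that a smooth commutative formal group over $\Ok_L$ admitting an endomorphism which is $p^m$ times a unit on the tangent space must be $p$-divisible --- is false: $\widehat{\mathbb{G}}_a$ admits $[p^m]$, which acts on the Lie algebra by $p^m$, and yet $[p]$ on $\widehat{\mathbb{G}}_a$ is not an isogeny. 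The real obstruction to $p$-divisibility is a possible additive (unipotent) part of the formal subgroup $\Def_{(a,a+1)}\subset\Ext(\pdiv^{\can}_{a+1},\pdiv^{\can}_a)$, and this is a characteristic-$p$ phenomenon invisible both on the tangent space and on $\Ok_L$-points via the description by $U^{\circ}_{(a,a+1)}(\Ok_L)$; your argument provides no mechanism to rule it out.

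The paper closes exactly this gap with different, genuinely geometric inputs. One reduces modulo $p$ and applies Chai's theorem: the mod-$p$ extension space $\Ext_p(\pdiv_{a+1},\pdiv_a)$ is a smooth formal group whose maximal $p$-divisible formal subgroup coincides with the central leaf of the equicharacteristic deformation space of $\pdiv_{(a,a+1)}$. Then Proposition \ref{groupsisom} (any two $\mu$-ordinary points of the Shimura variety have isomorphic $p$-divisible groups) combined with Wortmann's density theorem (Proposition \ref{Wort}) shows that $\Spf R_G\times_{\Z_p}\F_p$, and hence $\Def_{(a,a+1)}\times_{\Z_p}\F_p$, lies inside the central leaf, therefore inside the maximal $p$-divisible subgroup; this is what yields $p$-divisibility, and then density of torsion points follows as you say. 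Some input of this kind (Chai's identification of the maximal $p$-divisible subgroup, plus the fact that the $\mu$-ordinary locus is a single dense central leaf) is indispensable, and it is entirely absent from your proposal; the parts of your write-up that set up $\Def_{(a,a+1)}$ as a formal subgroup of the Ext-space and reduce torsion density to $p$-divisibility are fine, but the core of the proposition is not established.
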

	\begin{proof}
		The proof is an exercise in putting together results proved above, and a theorem of Chai. In order to prove that $\Def_{(a,a+1)}$ is $p$-divisible, it suffices to prove the analogous statement modulo $p$. To that end, let $\Ext_p(\pdiv_{a+1},\pdiv_a)$ be the subfunctor of the mod-$p$ deformation space of $\pdiv_{(a,a+1)}$, consisting of extensions of $\pdiv_{a+1}$ by $\pdiv_a$. According to \cite[\S 2]{Chai},  $\Ext_p(\pdiv_{a+1},\pdiv_a)$ is smooth formal group, having a maximal $p$-divisible formal subgroup $\Ext_p^{\ppdiv}$. Further, by \cite[\S 3.1]{Chai}, we have that $\Ext_p^{\ppdiv}$ equals the central leaf of the equicharacteristic deformation space of $\pdiv_{(a,a+1)}$. 
		
		Proposition \ref{groupsisom} and Wortman's result Proposition \ref{Wort}, yield that $\Spf \ R_G \times_{\Z_p} \F_p$ is contained within the central leaf of $\Def(\pdiv) \times_{\Z_p}\F_p$. Consequently, $\Def_{(a,a+1)} \times_{\Z_p}\F_p$ is contained inside the central leaf of the deformation space of $\pdiv_{(a,a+1)}$, and hence is $p$-divisible. 
		
		The result follows, because every $p$-divisible formal group over $\Ok_L$ has a dense set of torsion points.
	\end{proof}
	
	\begin{prop}\label{tpdivfiber}
		The fibers of $\Def_{(a-1,b)}$ and $\Def_{(a,b+1)}$ over $0 \in \Defab$ are $p$-divisible, and hence have a dense set of torsion points. 
	\end{prop}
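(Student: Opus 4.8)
The two assertions are symmetric — the first concerns the $\rho$-maps $\Def_{(a-1,b)}\to\Def_{(a,b)}$ and the second the $\lambda$-maps $\Def_{(a,b+1)}\to\Def_{(a,b)}$, and the cascade $\Gamma(\pdiv)$ is invariant (up to reversing the order of the slopes, i.e. replacing each $\pdiv_i$ by its Cartier dual) under the interchange of $\lambda$ and $\rho$ — so I would prove only that the fibre of $\Def_{(a-1,b)}$ over $0\in\Def_{(a,b)}$ is $p$-divisible, for all $1\le a-1\le b\le r$, the other case being verbatim with $\lambda$ in place of $\rho$. I would argue by induction on the width $w=b-a$ of the base. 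When $w=0$ we have $\Def_{(a,a)}=\Spf\Ok_L$, a point, so the fibre in question is all of $\Def_{(a-1,a)}$, which is $p$-divisible by Proposition \ref{cho}; this is the base case.

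For the inductive step, recall that in the cascade $\Gamma(\pdiv)$ the map $\rho\colon\Gamma_{(a-1,b)}\to\Gamma_{(a,b)}$ exhibits $\Gamma_{(a-1,b)}$ over $\Gamma_{(a-1,b-1)}\times_{\Gamma_{(a,b-1)}}\Gamma_{(a,b)}$ as a torsor under $E_{(a-1,b)}=\Ext(\pdiv^{\can}_b,\pdiv^{\can}_{a-1})$; taking fibres over $0\in\Gamma_{(a,b)}$ (which maps to $0\in\Gamma_{(a,b-1)}$ under $\lambda$) therefore yields a short exact sequence of smooth formal groups over $\Ok_L$
$$0\longrightarrow E_{(a-1,b)}\longrightarrow \Phi\longrightarrow \Phi_1\longrightarrow 0,$$
where $\Phi_1$ is the fibre of $\Gamma_{(a-1,b-1)}$ over $0\in\Gamma_{(a,b-1)}$. (Equivalently, without the cascade language: $\Phi$ is the space of deformations of $\pdiv_{(a-1,b)}$ admitting a slope filtration which restrict to the canonical lift of $\pdiv_{(a,b)}$, hence of extensions of $\pdiv^{\can}_{(a,b)}$ by $\pdiv^{\can}_{a-1}$.) Now $E_{(a-1,b)}$ is a $p$-divisible formal group because $\pdiv_{a-1}$ and $\pdiv_b$ are isoclinic of distinct slopes — this is the theorem of Chai invoked in the proof of Proposition \ref{cho} — and $\Phi_1$ is $p$-divisible by the inductive hypothesis, the base $\Def_{(a,b-1)}$ having width $w-1$; since $p$-divisibility of smooth formal groups over $\Ok_L$ is stable under extensions, $\Phi$ is $p$-divisible. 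Finally, by Theorem \ref{cmhyp} the space $\Def_G$ is a shifted subcascade of $\Gamma(\pdiv)$, so the fibre of $\Def_{(a-1,b)}$ over $0\in\Def_{(a,b)}$ is a smooth closed formal subgroup of $\Phi$; a closed formal subgroup of a $p$-divisible formal group is again $p$-divisible, its $[p]$-kernel being a closed subgroup scheme of the finite flat group scheme $\Phi[p]$. The density of torsion points then follows, since every $p$-divisible formal group over $\Ok_L$ has a dense set of torsion points, exactly as used at the end of Proposition \ref{cho}.

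\textbf{Main obstacle.} The one place that requires genuine care is producing the displayed exact sequence: one must unwind the biextension axioms defining the cascade to see that passing to the fibre over $0$ along $\rho$ turns $\Gamma_{(a-1,b)}$ into an $E_{(a-1,b)}$-extension whose quotient is precisely the smaller fibre $\Phi_1$ handled by the inductive hypothesis, using the compatibility $\rho_{(a-1,b-1)}\circ\lambda_{(a-1,b)}=\lambda_{(a,b)}\circ\rho_{(a-1,b)}$. This is a formal consequence of the cascade formalism of \cite[\S2.2--2.3]{Mo}, but it is the substantive input; everything after it is the bookkeeping above. If one prefers to avoid unwinding the biextension structure, the same induction can be run directly on the spaces $\Ext^1(\pdiv^{\can}_{(a,b)},\pdiv^{\can}_{a-1})$, filtering $\pdiv_{(a,b)}$ by its slope filtration and using the vanishing $\Hom(\pdiv^{\can}_i,\pdiv^{\can}_{a-1})=0$ for $a\le i\le b$ to reduce to the isoclinic case covered by Chai's theorem.
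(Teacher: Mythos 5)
There is a genuine gap at the central step of your induction: you assert that $E_{(a-1,b)}=\Ext(\pdiv^{\can}_b,\pdiv^{\can}_{a-1})$ is a $p$-divisible formal group simply because $\pdiv_{a-1}$ and $\pdiv_b$ are isoclinic of distinct slopes, and you attribute this to Chai. That is not what Chai proves, and it is false in general. Chai's theorem (as used in Proposition \ref{cho}) says that the smooth formal group $\Ext_p(\pdiv_b,\pdiv_{a-1})$ has a \emph{maximal} $p$-divisible formal subgroup, which coincides with the central leaf; the very formulation presupposes that the full Ext group is usually not $p$-divisible. Concretely, for isoclinic groups of slopes $\lambda_{a-1}>\lambda_b$ and heights $h_{a-1},h_b$, the Ext deformation space has dimension $\dim(\pdiv_{a-1})\cdot\dim(\pdiv_b^{\vee})$ while the central leaf has dimension $(\lambda_{a-1}-\lambda_b)h_{a-1}h_b$; already for two height-$3$ pieces of slopes $2/3$ and $1/3$ these are $4$ and $3$. (The two numbers agree only in the classical ordinary case of slopes $1$ and $0$, which is why the naive statement looks plausible.) Consequently the ambient fiber $\Phi$ of $\Gamma_{(a-1,b)}$ over $0$ is not $p$-divisible in general, so you cannot conclude by exhibiting the $\Def$-fiber as a closed subgroup of $\Phi$; your closing alternative ("reduce to the isoclinic case covered by Chai's theorem") fails for the same reason. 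The indispensable input, which your argument omits entirely, is the geometric fact that the fiber of $\Def_{(a-1,b)}$ lies inside the \emph{central leaf}: by Propositions \ref{Wort} and \ref{groupsisom} all $\overline{\F}_p$-points of the $\mu$-ordinary locus carry isomorphic $p$-divisible groups, so the mod-$p$ fiber sits in the central leaf of the extension space, and only then does Chai's identification of the central leaf with the maximal $p$-divisible subgroup give $p$-divisibility. The paper's proof does exactly this, after using the splitting $\Ext(\pdiv_{(a,b)}^{\can},\pdiv_{a-1}^{\can})=\prod_{i=a}^b\Ext(\pdiv_i^{\can},\pdiv_{a-1}^{\can})$ of the base point's canonical lift instead of your induction on the width.

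A secondary structural problem: your induction is stated for fibers of $\Def$, but the quotient term $\Phi_1$ in your exact sequence is the fiber of the full cascade $\Gamma_{(a-1,b-1)}$ over $0\in\Gamma_{(a,b-1)}$, which the inductive hypothesis does not control. If you repair this by intersecting with $\Def$ throughout, the kernel of the resulting sequence is no longer all of $E_{(a-1,b)}$ but its intersection with the $G$-adapted locus, and proving \emph{that} is $p$-divisible again requires the central-leaf argument above — at which point the induction buys nothing over the paper's direct proof.
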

	\begin{proof}
		The same proof works for both $\Def_{(a-1,b)}$ and $\Def_{(a,b+1)}$. Therefore, let $H = \Spf R_H$ denote the fiber of $\Def_{(a-1,b)}$ over $0$. Because $\pdivab^{\can} = \displaystyle{\prod_{i=a}^b\pdiv_i^{\can}}$, we have $\Ext(\pdivab^{\can},\pdiv_{a-1}^{\can}) =\prod_{i=a}^b \Ext(\displaystyle{\pdiv_i^{\can}}, \pdiv_{a-1}^{\can})$. It suffices to prove that the image of $H$ in $\Ext(\pdiv^{\can}_i,\pdiv_{a-1}^{\can})$ is $p$-divisible for $a \le i \le b$. Again, it suffices to prove this modulo $p$. Let $H_p$ and $\Ext_p$ denote the mod $p$ reduction of $H$ and $\Ext$.
		
		Notice that the map from $H$ to $\Ext(\pdiv^{\can}_i,\pdiv_1^{\can})$ is induced by pushing out by the projection $\displaystyle{\prod_{j=a}^b\pdiv_j^{\can}} \rightarrow \pdiv^{\can}_i$. By Propositions \ref{groupsisom} and \ref{Wort}, the image of $H_p$ in $\Ext_p(\pdivab^{\can},\pdiv_{a-1}^{\can})$, and therefore in each $\Ext_p(\pdiv_i^{\can},\pdiv_{a-1}^{\can})$, is contained in the central leaf of the extension space. The result follows from \cite[3.1]{Chai}. 
	\end{proof}
	
	Before turning to Proposition \ref{finall}, we will need the following results and discussion. 
	
	\begin{prop}\label{tcosetfiber}
		Let $x \in \Def_{(a,b)}(\Ok_K)$, and suppose that $\pdivh_{(a,b)}$ denote the corresponding $p$-divisible group. Suppose that $F_x$ is the fiber of $\Def{(a-1,b)}$ over $x$. Then $F_x \subset \Ext(\pdivh_{(a,b)},\pdiv_{a-1}^{\can})$ is a coset of some subgroup $H$, which is $p$-divisible. The analogous result holds for the fiber of $\Def_{(a,b+1)}$ over $x$.
	\end{prop}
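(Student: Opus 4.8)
The plan is to read off the coset structure directly from the shifted-subcascade property proved above, and to obtain the $p$-divisibility of $H$ by reducing modulo $p$ and comparing $F_x$ with the fibre over the zero section, which is already known to be $p$-divisible by Proposition \ref{tpdivfiber}.

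First I would record that, by Theorem \ref{cmhyp} and Proposition \ref{in}, $\Def_{(a-1,b)}$ is a shifted subcascade of the cascade $\Gamma(\pdiv_{(a-1,b)})$ attached to $\pdiv_{(a-1,b)}$, and that $\Def_{(a,b)}$ is its image in the $(a,b)$-component of that cascade. Unwinding the description of the $(a-1,b)$-component as the locus of deformations of $\pdiv_{(a-1,b)}$ carrying a slope filtration with graded pieces the $\pdiv_i^{\can}$ (Proposition \ref{slopefilt}), one checks that its fibre over the point $x$ corresponding to $\pdivh_{(a,b)}$ is precisely the formal group $\Ext(\pdivh_{(a,b)},\pdiv_{a-1}^{\can})$: an extension of $\pdivh_{(a,b)}$ by $\pdiv_{a-1}^{\can}$ acquires a canonical slope filtration by pulling back the one on $\pdivh_{(a,b)}$, and conversely. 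Part (3) of the definition of a shifted subcascade then says exactly that $F_x$ is a coset of a subgroup $H$ of $\Ext(\pdivh_{(a,b)},\pdiv_{a-1}^{\can})$; the same argument applied to $\Def_{(a,b+1)}\to\Def_{(a,b)}$ (the $\lambda$-arrow of the cascade in place of the $\rho$-arrow) handles the second assertion, with ambient group $\Ext(\pdiv_{b+1}^{\can},\pdivh_{(a,b)})$.

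It remains to show $H$ is $p$-divisible. Since the restriction map $\Def_{(a-1,b)}\to\Def_{(a,b)}$ is smooth (Proposition \ref{rest}), $F_x$ is nonempty, and $F_x$, hence $H$, is a smooth connected formal group over $\Ok_K$. I then reduce modulo the maximal ideal of $\Ok_K$. Because $\pdivh_{(a,b)}$ is a deformation of $\pdiv_{(a,b)}$, the point $x$ reduces to the unique closed point of $\Def_{(a,b)}\otimes_{\Ok_L}\Fpbar$, which is also the reduction of the zero section; hence $F_x\otimes_{\Ok_K}\Fpbar$ is the fibre of $\Def_{(a-1,b)}\otimes\Fpbar\to\Def_{(a,b)}\otimes\Fpbar$ over that closed point, and this is the same as $H_0\otimes_{\Ok_L}\Fpbar$, where $H_0$ is the fibre of $\Def_{(a-1,b)}\to\Def_{(a,b)}$ over the zero section. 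Using that $\Ext$ of $p$-divisible groups commutes with base change and that $\Ext(\pdiv_{(a,b)},\pdiv_{a-1})(\Fpbar)=0$ (it is the group of $\Fpbar$-points of a formal group over a field), the distinguished $\Ok_K$-point cutting out the coset $F_x$ reduces to the origin, so both $F_x\otimes\Fpbar$ and $H_0\otimes\Fpbar$ are the same subgroup of $\Ext(\pdiv_{(a,b)},\pdiv_{a-1})$, whence $H\otimes_{\Ok_K}\Fpbar = H_0\otimes_{\Ok_L}\Fpbar$ as formal groups. By Proposition \ref{tpdivfiber} the right-hand side is $p$-divisible, so $H\otimes\Fpbar$ is a $p$-divisible formal group; and a smooth connected formal group over $\Ok_K$ whose special fibre is $p$-divisible is itself $p$-divisible, since $[p]$ is then finite locally free (this can be checked on the special fibre by Nakayama). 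Hence $H$ is $p$-divisible.

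The main obstacle is the second paragraph: one must verify that the fibre of the ambient cascade $\Gamma(\pdiv_{(a-1,b)})$ over $x$ genuinely is the Ext group $\Ext(\pdivh_{(a,b)},\pdiv_{a-1}^{\can})$ named in the statement, i.e. that over $x$ the biextension structure of the cascade degenerates to the familiar extension-of-$p$-divisible-groups picture; and, in the third paragraph, that the group laws (not merely the underlying formal schemes) on $F_x\otimes\Fpbar$ and $H_0\otimes\Fpbar$ coincide, since it is this that transports $p$-divisibility from the zero fibre to $F_x$.
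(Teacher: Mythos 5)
Your argument is essentially the paper's own proof: the coset structure is read off from the shifted-subcascade property, and $p$-divisibility of $H$ is obtained by observing that $x$ and $0$ (and the coset point) reduce to the same closed point, so $F_x$, $H$ and the zero-fibre all have the same reduction modulo $p$, and then invoking Proposition \ref{tpdivfiber}. You merely make explicit two steps the paper leaves implicit (the identification of the ambient fibre with $\Ext(\pdivh_{(a,b)},\pdiv_{a-1}^{\can})$ and the transfer of $p$-divisibility from the special fibre), which is fine but not a different route.
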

	\begin{proof}
		That there exists $H$ such that $F_x$ is a coset of $H$ follows because $\Def_G$ is a shifted subcascade. We have to prove that the subgroup is $p$-divisible. Let $y \in F_x$ be some $\Ok_K$ point. We have $F_x = H + y$. We have $y$  reduces modulo $p$ to the unique closed point of $\Def_{(a-1,b)}$. Therefore, $F_x$ and $H$ have the same reduction modulo $p$. It therefore suffices to prove that the reduction modulo $p$ of $F_x$ is a $p$-divisible formal group. 
		
		We have that $x$ and $0$ reduce to the same point modulo $p$ (namely the unique closed point of $\Def_{(a,b)})$. Therefore, the mod $p$ reductions of $F_x$ and $F_{0}$ are the same. The result follows from Proposition \ref{tpdivfiber}. 
	\end{proof}
	
	It suffices to prove that if $x \in \Defab(\Ok_K)$ is a torsion point, then the fiber $F_x \subset \Def{(a-1,b)}$ is the translate of the $p$-divisible group $H$ by a torsion point. 
	
	Recall that the slopes of the $\pdiv_i$ are $\lambda_i$. Let $c \in \mathbb{N}$ be such that $c\lambda_i \in \Z$, and let $\lambda'_i = c\lambda_i$. Let $\pdivh \rightarrow S$ be a $p$-divisible group corresponding to an $S$-point of $\Defab$. We inductively define the following sequence of $p$-divisible groups $\pdivh(i)$ for $i = b-a+1, \hdots, 1$: $\pdivh(b-a+1)$ is defined to be $\pdivh$. Suppose that $\pdivh(b-a+1), \hdots, \pdivh(k)$ have been defined, and each group still has the slope filtration. We now define $\pdivh(k-1)$ as follows: $\pdivh(k)$ has a slope filtration by assumption, and therefore is an extension of $\pdivh(k)_{(k,b)}$ by $\pdivh(k)/\pdivh(k)_{(a,k-1)}$. We multiply this extension by $p^{\lambda'_k - \lambda'_{k-1}}$, and define $\pdivh(k-1)$ to be the resulting $p$-divisible group. It follows that $\pdivh(k-1)$ has a canonical slope filtration induced by $\pdivh(k)$. All these groups are isogenous to each other. 
	
	We now specialise to the case where $\pdivh \rightarrow S$ is $\pdivab^{\uu} \rightarrow \Defab$. Each $\pdiv^{\uu}(n)$ gives a map $\eta_n: \Defab \rightarrow \Def(\pdivab)$. We have the following result:
	
	\begin{prop}
		The map $\eta_1: \Defab \rightarrow \Def(\pdivab)$ factors through the natural inclusion $\Defab\subset \Def(\pdiv_{(a,b)})$. 
	\end{prop}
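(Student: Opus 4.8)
The plan is to reduce the statement to $\Ok_L$-points and then to recognise the twisting procedure $\pdivh \rightsquigarrow \pdivh(1)$, at the level of Hodge filtrations, as conjugation by an integral power of the projected Newton cocharacter $\nu$; the last sentence of Proposition \ref{rest}, that $\Uab^{\circ}$ is stable under conjugation by the projection of $\nu$, then finishes the job.

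\emph{Reduction to $\Ok_L$-points.} Writing $\Def(\pdivab) = \Spf A$ and $\Defab = \Spf(A/I)$, the map $\eta_1$ is a continuous $\Ok_L$-algebra homomorphism $A \to R_{(a,b)}$ and we must show that it kills $I$. Since $R_{(a,b)} \cong \Ok_L[[t_1,\dots,t_m]]$ is a $p$-torsion free domain and a nonzero power series over $\Ok_L$ cannot vanish at every point of $(p\Ok_L)^m$ (Weierstrass preparation and induction on $m$), a function on $\Defab$ vanishing at all of $\Defab(\Ok_L)$ must be zero; so it suffices to check that $\eta_1$ carries every $\Ok_L$-point of $\Defab$ into $\Defab(\Ok_L) \subseteq \Def(\pdivab)(\Ok_L)$. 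This is the device, used throughout the paper, of checking a property of a map of formally smooth $\Ok_L$-formal schemes on $\Ok_L$-points.

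\emph{The twist as conjugation by $\nu$.} Fix $x \in \Defab(\Ok_L)$ with associated deformation $\pdivh$ of $\pdivab = \prod_{i=a}^{b}\pdiv_i$; by Propositions \ref{unipfilt} and \ref{rest} it corresponds to a unique $u \in \Uab^{\circ}(\Ok_L)$ with $u \equiv 1 \bmod p$, the corresponding Hodge filtration on $\D_{(a,b)}$ being the one induced by $u\mu^{-1}$. Each stage of the twist alters only the outer extension class of $0 \to \pdivh(k)_{(a,k-1)} \to \pdivh(k) \to \pdivh(k)_{(k,b)} \to 0$, so it does not change the graded pieces of the slope filtration; hence the graded pieces of $\pdivh(1)$ are again the canonical lifts $\pdiv_i^{\can}$, and inductively the closed fibre of $\pdivh(1)$, being the twist of the split group $\prod_i\pdiv_i$, stays split (the twist of a split extension is split). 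Consequently $\pdivh(1)$ is a genuine deformation of $\pdivab$ over $\Ok_L$, corresponding to some $u' \in U^{\circ}_{(a,b)}(\Ok_L)$ with $u' \equiv 1 \bmod p$, where $U^{\circ}_{(a,b)} \supseteq \Uab^{\circ}$ is the full opposite unipotent of $\mu$ in $\GL(\D_{(a,b)})$. Writing $u$ in block form along $\D_{(a,b)} = \bigoplus_i\D_i$, the $k$-th twist scales the blocks coupling $\{\D_a,\dots,\D_{k-1}\}$ with $\{\D_k,\dots,\D_b\}$ by $p^{\lambda'_k - \lambda'_{k-1}}$ — the linearity here is exactly the content of the Ext-space / biextension description recalled in \S 4 together with Grothendieck–Messing — and composing over $k = b, b-1, \dots, a+1$ the exponents telescope, so the net effect on $u$ is conjugation by $\theta(p)$, where $\theta = \pm c\,\nu|_{\D_{(a,b)}}$ is the resulting integral multiple of the projected Newton cocharacter; in other words $u' = \theta(p)\,u\,\theta(p)^{-1}$ over $L$. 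Establishing this identification carefully — matching the iterated twisting with the group law on the Ext-spaces and with Grothendieck–Messing, and keeping track of the exponents — is the step I expect to be the main obstacle.

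\emph{Conclusion.} By the last assertion of Proposition \ref{rest}, $\Uab^{\circ}$ is stable under conjugation by the projection of $\nu$, hence by $\theta(p)$; thus conjugation by $\theta(p)$ is an automorphism of the $L$-group $U^{\circ}_{(a,b)}$ preserving the $L$-subgroup $\Uab^{\circ}$, so $u' = \theta(p)\,u\,\theta(p)^{-1} \in \Uab^{\circ}(L)$. On the other hand $u' \in U^{\circ}_{(a,b)}(\Ok_L)$; since $\Uab^{\circ}$ is a closed subscheme of $U^{\circ}_{(a,b)}$ and $\Ok_L$ is $p$-torsion free, an $\Ok_L$-point of $U^{\circ}_{(a,b)}$ that lands in $\Uab^{\circ}$ after inverting $p$ already lands in $\Uab^{\circ}$, so $u' \in \Uab^{\circ}(\Ok_L)$. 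As $u' \equiv 1 \bmod p$, Proposition \ref{rest} identifies $\pdivh(1)$ with the point of $\Defab(\Ok_L)$ attached to $u'$. Since this holds for every $x \in \Defab(\Ok_L)$, the reduction step shows that $\eta_1$ factors through $\Defab \subseteq \Def(\pdiv_{(a,b)})$.
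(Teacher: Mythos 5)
Your overall strategy is the same as the paper's: reduce to $\Ok_L$-points, show that the twist $\pdivh \rightsquigarrow \pdivh(1)$ changes the unipotent parameter $u$ by conjugation by $c\nu(p)$, and conclude from the last assertion of Proposition \ref{rest}. The reduction to $\Ok_L$-points and the final integrality/closedness argument are fine. But the middle step --- that multiplying the extension class at stage $k$ by the relevant power of $p$ scales the corresponding off-diagonal blocks of $u$ by that same power of $p$, so that the composite twist acts on $u$ by conjugation by $c\nu(p)$ --- is exactly the point you leave unproved, and you say so yourself (``the step I expect to be the main obstacle''). That is a genuine gap: what is needed is the compatibility between the group law on $\Ext$ and the parametrization of filtrations by elements $u$ of the unipotent, i.e.\ the statement that the Baer sum of two lifts with parameters $u$ and $u'$ has parameter $u+u'$. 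This is not ``exactly the content of the Ext-space / biextension description recalled in \S 4'': the cascade structure gives you group laws on the fibers, but it does not by itself tell you that the map $u \mapsto (\text{filtration } u\mu)$ is additive for those group laws. In the paper this compatibility is Proposition \ref{twoslopecalc}, proved in the Appendix by an explicit Honda-system (Fontaine--Laffaille) computation of the Baer sum; the paper's proof of the present proposition then simply applies Proposition \ref{twoslopecalc} iteratively to see that the filtration of $\pdivh(1)$ is induced by the conjugate of $u$ by $c\nu(p)$, and finishes with Proposition \ref{rest} as you do.

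So the missing idea is precisely the additivity statement of Proposition \ref{twoslopecalc} (or an equivalent crystalline computation); without it, the identity $u' = \theta(p)\,u\,\theta(p)^{-1}$ has no justification, and with it your argument closes up and coincides with the paper's. Two smaller remarks: your claim that the special fibre of $\pdivh(1)$ stays split is fine (multiplication by a power of $p$ on $\Ext$ fixes the trivial class and commutes with base change), and your sign hedge $\theta = \pm c\,\nu$ should be resolved, since integrality of the conjugated entries (equivalently $u' \equiv 1 \bmod p$) depends on conjugating in the correct direction relative to the slope ordering $\lambda_a > \dots > \lambda_b$; your closedness argument sidesteps this only after the conjugation formula is actually established.
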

	\begin{proof}
		It suffices to prove that if $\pdivh$ is a $\Ok_L$-point of $\Defab$, then so is $\pdivh(1)$. Suppose that the filtration on $\pdivh$ is given by $u\mu u^{-1}$, where $u\in \Uab^\circ(\Ok_L)$. By applying Proposition \ref{twoslopecalc} iteratively, we see that the filtration on $\pdivh(1)$ is given by $u'\mu u^{-1}$ where $u'$ is the conjugate of $g$ by $c\nu(p)$. The result follows from the last assertion of Proposition \ref{rest}. 
	\end{proof}
	
	\begin{prop}\label{finall}
		Let $x \in \Defab(\Ok_K)$ be a torsion point. Then the fiber $F_x \subset \Def_{(a-1,b)}$ over $x$ has a dense set of torsion points. The analogous claim holds for $\Def_{(a,b+1)}$.
	\end{prop}
	\begin{proof}
		We will be content with proving the result only for $\Def_{(a-1,b)}$. 
		
		Let $\pdivh$ be the $p$-divisible group corresponding to the point $x$. Let $x(1) \in \Defab$ be the point corresponding to $\pdivh(1)$. Then, $\eta_1: \Def_{(a-1,b)} \rightarrow \Def_{(a-1,b)}$ maps $F_x$ to $F_{x(1)}$. There is a homomorphism $\zeta_1: \Ext(\pdivh,\pdiv_{a-1}^{\can}) \rightarrow \Ext(\pdivh(1),\pdiv_{a-1}^{\can})$ which divides $p^m$, such that when restricted gives $\eta_1: F_x \rightarrow F_{x(1)}$.
		
		Pick the integer $c$, such that $x(1)$ is canonical lift. The corresponding fiber $F$ has a dense set of torsion points by Proposition \ref{tpdivfiber}.
		
		By Proposition \ref{tcosetfiber}, $F_x = H + P$, where $H \subset \Ext(\pdivh,\pdiv_{a-1}^{\can})$ is a formal subgroup which is $p$-divisible, and $P$ some point. So, $\eta_1(H + P) = \zeta_1(H + P) = \zeta_1(H) + \zeta_1(P) \subset F$. As $F$ is a subgroup, we have that $\zeta_1(H) \subset F$. By Proposition \ref{rest}, the dimension of $F_x$ equals the dimension of $F$. As $H$ is $p$-divisible and $\zeta_1$ is a homomorphism dividing $p$, we have that $\zeta_1(H) = F$, and thus $\eta_1(F_x) = F$. 
		
		Because $\eta_1$ takes a point to an isogenous point, we have that $F_x$ has at least one special point, which thus has to be a torsion point. Because $F_x$ is an $H$-coset of $\Ext(\pdivh,\pdiv_{a-1}^{\can})$ with $H$ a $p$-divisible formal group, $F_x$ must have a dense set of torsion points (as that is true for $H$). Therefore, $F_x$ has a dense set of torsion points. 
	\end{proof}

\end{document}